\numberwithin{equation}{section}
\newtheorem{theorem}{Theorem}[section]
\newtheorem{proposition}[theorem]{Proposition}
\newtheorem{lemma}[theorem]{Lemma}
\theoremstyle{definition}
\newtheorem{remark}[theorem]{Remark}
\def\R{{\mathfrak R}}
\def\begeq{\begin{equation}}
\def\endeq{\end{equation}}
\def\R{\Bbb R}
\def\dev{\textup{\,d}}
\def\dy{\dev y}
\begin{document}

\title{ New type of solutions for the nonlinear Schr\"odinger-Newton system }
\author{Haixia Chen}
\address{Haixia Chen,
\newline\indent  School of Mathematics and Statistics, Central China Normal University,
\newline\indent Wuhan 430079, P. R. China.
}
\email{hxchen@mails.ccnu.edu.cn}

\author{Pingping Yang$^{\dagger}$}
\address{Pingping Yang,
\newline\indent School of Mathematics and Statistics, Central China Normal University,
\newline\indent Wuhan 430079, P. R. China.
}
\email{ypp15623175603@mails.ccnu.edu.cn}

\begin{abstract}
The nonlinear Schr\"{o}dinger-Newton system
\begin{align*}
 \begin{cases}
 \Delta  u- V(|x|)u + \Psi  u=0,\quad &x\in\R^3,\vspace{0.05mm}\\
\Delta  \Psi+\frac12 u^2=0, &x\in\R^3,
\end{cases}
\end{align*}
is a nonlinear system obtained by coupling the linear Schr\"{o}dinger equation of quantum mechanics with the gravitation law of Newtonian mechanics.~Wei~and~Yan~in (Calc. Var. Partial Differential Equations 37 (2010),423--439) proved that the ~Schr\"{o}dinger~ equation has infinitely many positive solutions in ~$\mathbb{R}^N$~  and these  solutions have polygonal symmetry in the ~$(y_{1}, y_{2})$~ plane and they are  radially symmetric in the other variables.~Duan~ et al. in ( arXiv:2006.16125v1 ) extended the results got by~ Wei~ and~ Yan~ and  they proved that the ~Schr\"{o}dinger~ equation has infinitely many positive solutions in ~$\mathbb{R}^N$~  and these  solutions have polygonal symmetry in the ~$(y_{1}, y_{2})$~ plane and they are  even in ~$y_{2}$~ with one more more parameter in the expression of the solutions.~Hu~ et al. also extended the results got by~Wei~and~Yan~. Under the appropriate assumption on the potential function ~V,~ ~ Hu~et al. in (arXiv: 2106.04288v1) constructed  infinitely many  non-radial positive solutions for  the  Schr\"{o}dinger-Newton  system and these positive solutions have polygonal symmetry in the~$(y_{1}, y_{2})$~plane and they are  even in~$y_{2}$~ and ~$y_{3}$.~
  Assuming that ~$V(r)$~has the following character
\begin{equation*}
    V(r)=V_{1}+\frac{b}{r^q}+O\Big(\frac{1}{r^{q+\sigma}}\Big),
 \quad\mbox{ as  } r\rightarrow\infty,
\end{equation*}
where ~$\frac12\le q<1$~ and ~$b, V_{1}, \sigma$~ are some positive constants, ~$V(y)\ge V_1>0$~, we construct infinitely many  non-radial positive solutions which have polygonal symmetry in the ~$(y_{1}, y_{2})$~ plane and are even in ~$y_{2}$~ for  the  Schr\"{o}dinger-Newton  system by the ~Lyapunov-Schmidt~ reduction method. We extend the results got by ~ Duan~ et al. in ( arXiv:2006.16125v1 ) to  the  nonlinear Schr\"{o}dinger-Newton system. Meanwhile, the solutions constructed by us are   different from what constructed by ~Hu~et al. as there is one more parameter in the expression of the solutions constructed by ~Hu~et al..
\vspace{2mm}

{\textbf{Keyword:}  Nonlinear  Schr\"odinger-Newton system; Infinitely  many solutions; New solutions; Finite dimensional reduction.}
\vspace{2mm}

{\textbf AMS Subject Classifications:}  35B25 $\cdot$ 35J20 $\cdot$ 35J60
\vspace{2mm}
\end{abstract}
\thanks{$^{\dagger}$ Corresponding author: Pingping Yang }
%\date{ \today }
\maketitle

\section{introduction}
  The nonlinear Schr\"{o}dinger-Newton system is a nonlinear system obtained by coupling the linear Schr\"{o}dinger equation of quantum mechanics with the gravitation law of Newtonian mechanics, where the wave function $u$  means a stationary  solution for a quantum system describing a  nonlinear modification of the Schr\"{o}dinger equation with a Newtonian gravitational potential representing the interaction of the particle with its own gravitational field. Our aim is to construct  solutions of the  following    nonlinear  Schr\"odinger-Newton equation
    \begin{equation}\label{SN0}
        \begin{cases}
        \Delta  u- V(|x|)u + \Psi  u=0,\quad &x\in\R^3,\\[2mm]
        \Delta\Psi+\frac12u^2=0, &x\in\R^3,
        \end{cases}
    \end{equation}
 where $u\in H^{1}(\R^3)$ and  $V$ is a given external potential and a bounded radially symmetric potential with $V(y)\ge V_1>0$, $\Psi$ is the Newtonian gravitational potential.  The latter model was proposed in \cite{Penrose}.
    We can see  that the second equation of \eqref{SN0} (see \cite{Trudinger1997})  has a unique positive solution $\Psi_{u}\in D^{1,2}(\R^3)$  which has the form as follows
    \begin{equation}\label{psi}
        \Psi_{u}(x)=\frac1{8\pi}\int_{\R^3}\dfrac{u^2(y)}{|x-y|}\dy.
    \end{equation}
So, the system \eqref{SN0} is equivalent to the single nonlocal equation as follows
    \begin{equation}\label{SSN}
        -\Delta u+V(x)u=\frac1{8\pi}
           \Big(
                \int_{\R^3}\frac{u^2(y)}{|x-y|}\dy
            \Big)
            u, \quad x\in\R^3.
    \end{equation}
Obviously, $(u,\Psi_u)$ is a solution of the system \eqref{SN0} if and only if $u$ is a solution of the equation \eqref{SSN}.
   From \cite{lieb,  menzala,Lions0} , we can  make the equation~(\ref{SSN} )~as a special case of the Choquard equation:
\begin{equation*}
        -\Delta u+V(x)u= \Big(\int_{R^N}\frac{A_{\alpha}}{|x-y|^{N-2}}u^{p}(y)dy \Big)|u|^{p-2}u,
           \quad x\in\R^N,
    \end{equation*}
which describes an electron trapped in its own hole, where~$A_{\alpha}=\frac{\Gamma(\frac{N-\alpha}{2})}{\Gamma(\frac{\alpha}{2})\pi^{\frac{N}{2}}2^{\alpha}}$~.  We can also get
\begin{equation*}
        -\Delta u+V(x)u=\frac1{8\pi}
           \Big(
                \int_{\R^3}\frac{u^2(y)}{|x-y|}\dy
            \Big)
            u, \quad x\in\R^3,
    \end{equation*} with ~$N=3$~, ~$p=2$~, ~$\alpha=2.$~
  \par
   The study of standing waves of nonlinear Hartree equations:
 \begin{equation*}
        i\varepsilon\frac{\partial\varphi}{\partial t}=-\varepsilon^{2}\triangle_{x}\varphi+(V(x)+E)\varphi-\frac{1}{8\pi\varepsilon^{2}} \Big(\int_{R^3}\frac{\varphi^{2}(y)}{|x-y|} \Big)\varphi,
           \quad (x,t)\in\R^3\times\R^+,
    \end{equation*}
    can come down to the study of the equation~(\ref{SSN})~, with~$ \varphi(x,t)=e^{-iEt/\varepsilon}u(x)$~, where ~$i$~is the imaginary unit and ~$\varepsilon$~is the Planck constant .
\par
The existence and the uniqueness of ground state solution to the latter problem of the system:
\begin{equation}\label{1}
        \begin{cases}
            \Delta u-u+\Psi u=0,\quad &x\in\R^3,\\
            \Delta\Psi+\frac12 u^2=0, &x\in\R^3,
        \end{cases}
    \end{equation}
have been proven in  \cite{Lions,Lions2} and \cite{Tod99} .
The nondegeneracy of the ground state also has been proven in \cite{WeiJmp}. By \cite{Tod99} and \cite{WeiJmp}, we can know that the following equation:
\begin{equation}\label{U-equation}
  \begin{cases}
   -\Delta  u+  u - \Psi  u=0,\quad x\in\R^3,\\
  \Delta  \Psi+\frac12 u^2=0, \quad x\in\R^3,\\
   u,\ \Psi>0,\quad u(0)=\max_{x\in\R^3}u(x)
\end{cases}
\end{equation}
has a unique radial solution~$(U,\Psi)$~with
\begin{equation*}
   U(x)\rightarrow 0,\ \ \Psi(x)\rightarrow 0, \quad\mbox{ as }|x|\rightarrow\infty.
 \end{equation*}
Furthermore, ~$U$~decreases strictly and
\begin{equation}\label{decay}
  \lim_{|x|\rightarrow \infty}U(x)|x|e^{|x|}=\lambda_{2},\quad \lim_{|x|\rightarrow\infty}\frac{U'(x)}{U(x)}=-1,
\end{equation}
and
\begin{equation}\label{decay1}
    \lim_{|x|\rightarrow \infty}\Psi(x)|x| = \lambda_{3},
 \end{equation}
where~$\lambda_{2},\ \lambda_{3}$~are postive constants.
\par
  Kang  and Wei proved the existence of positive $K-$bump
   solutions to the nonlinear ~Schr\"{o}dinger~ equation:
   \begin{equation}\label{J}
       h^{2}\Delta  u-V(x)u+u^{p} = 0, \quad u>0, \quad x\in\R^N,
    \end{equation}
   concentrating  at local maximum  points of $V$ as $\varepsilon\to0$ in  \cite{Kang}. Furthermore, each pair of bumps has a  strong interaction. ~Wei ~and ~Winter~extended the results to the  following singularly perturbed  Schr\"{o}dinger-Newton  problem
    \begin{equation}\label{PSN}
        -\varepsilon^2\Delta  u+V(x)u = \frac{1}{8\pi\varepsilon^2}
            \Big(
                \int_{\R^3}\frac{u^2(y)}{|x-y|}\dy
           \Big)
            u, \quad x\in\R^3,
    \end{equation}
    where $\varepsilon>0$ is a  parameter and $\inf_{\R^3}V>0$.  Also, they proved the existence of positive $K-$bump solutions to \eqref{PSN} concentrating  at local maximum (minimum) or nondegenerate critical points of $V$ as $\varepsilon\to0$. Furthermore, it is  indicated that there is a  strong interacting between each pair of bumps.    ~Luo~  and ~Peng~et al. have shown the solutions to  \eqref{PSN} is unique with ~$\varepsilon$~ small enough  by using local Pohozaev identities, blow-up analysis and the maximum principle in \cite{Luo}. Recently, ~Guo~  and ~Luo~et al. proved the existence and local uniqueness of the normalized single peak solutions for a Schr\"{o}dinger-Newton system concentrating  at degenerate critical points of the potential $V$ and the non-existence of the normalized  multi-peak solutions  concentrating  at degenerate critical points of the potential $V$  for a Schr\"{o}dinger-Newton system by finite dimensional reduction method and local Pohozaev identities  in \cite{Guo1}.
    \par
    The paper with a more complex concentration structure is to construct a new family of entire solutions of \eqref{SN0} with a radial potential $V(r)$, that is,
    \begin{equation}\label{SN}
        \begin{cases}
            \Delta u-V(|x|)u+\Psi u=0,\quad &x\in\R^3,\\
            \Delta\Psi+\frac12 u^2=0, &x\in\R^3,
        \end{cases}
    \end{equation}
    or equivalently,
    \begin{equation}\label{SN1}
        -\Delta  u+V(|x|)u = \frac1{8\pi}
       \Big(
            \int_{\R^3}\frac{u^2(y)}{|x-y|}\dy
        \Big)
        u, \quad x\in\R^3.
    \end{equation}
    For the purpose, we assume that $V$ has the following behavior at infinity:
    \begin{itemize}
      \item[($\rm H_{a}$)] There exist some constants $b$, $\sigma$, $V_1>0$  and $\frac12\le q<1$, such that $V(x)\ge V_1$ and
          \begin{equation}
            V(r)=V_{1}+\frac{b}{r^q}+O\Big(\frac1{r^{q+\sigma}}\Big), \quad\mbox{ as } r\to\infty.
          \end{equation}
    \end{itemize}
    \par
    Observe that a direct scaling argument can enable us to just consider $V_{1}=1$. In the same assumption, Hu and Jevnikar et al. in \cite{HuY}  produce infinitely many non-radial solutions to \eqref{SN0} with high energy. From \cite{Tod99} and \cite{WeiJmp}, we can know the equation ~(\ref{U-equation})~has the unique radial solution ~$(U,\Psi),$~ so the equation \eqref{SN1} has the unique radial solution with ~$V(x)\equiv1$~. For any large integer $m$ they constructed a solution $u_m$ looking like   a sum of  $m$   {\it bumps}  $U(x-x_i^\star)$,
    \begin{equation}\label{uuk}
        u^\star_{m} (y) \sim \sum_{i=1}^m U(y-x_{i}^\star ),
    \end{equation}
    where the location points $x_{i}^\star$ are distributed along the vertices of a regular $m$-polygon
        $$x_{i}^\star=r\Big(\cos\frac{2(i-1)\pi}m, \sin\frac{2(i-1)\pi}m, 0\Big), \quad\text{ for }i=1,\ldots, m$$
    with large radius $r\sim(m\ln m)^{\frac{1}{1-q}}$ as $m\to\infty$. These solutions  have polygonal symmetry in the~$(y_{1}, y_{2})$~plane, which are  even in~$y_{2}$~ and ~$y_{3}$~and  have ~$m$~ bumps. Very recently, in \cite{Duan}, Duan and Musso used a new way to construct a new family of solutions of
    \begin{align*}
        \begin{cases}
            -\Delta u+V(y)u=u^p, \quad u>0, \text{ in }\R^N,\\
            u\in H^1(\R^N).
        \end{cases}
    \end{align*}
These solutions  have polygonal symmetry in the~$(y_{1}, y_{2})$~plane, which are  even in~$y_{2}$~ and have ~$2m$~ bumps.

  Recently, Gao and Yang in \cite{Gao} proved that the nonlinear Choquard equation:
 \begin{align}\label{sn9}
    \begin{cases}
             -\Delta u+V(|x|)u =
       \big(
            \int_{\R^3}\frac{u^2(y)}{|x-y|}\dy
        \big)u, \quad x \in \R^3,\\
           u\in H^{1}(\R^3),
    \end{cases}
\end{align}
has infinitely many non-radial positive solutions, where the potential $V$ satisfies:
 \begin{equation}\label{Ha3}
            V(r)=V_{1}+\frac{b}{r^q}+O\Big(\frac1{r^{q+\sigma}}\Big), \quad\mbox{ as } \quad r\to\infty,
\end{equation}
where~$b,\ \sigma,\ V_{1}>0$~and~$q\geq3$~.
\par
    In the paper, the main result is that we extend the result got by ~ Duan~ et al. in \cite{Duan} to  the  nonlinear Schr\"{o}dinger-Newton system. As in \cite{Duan}, let $m$ be an integer and define the points ~$\overline{x}_{i}=\overline{x}_{i}(r,t,m)$~, ~$\underline{x}_{i}=\underline{x}_{i}(r,t,m)$~:
    \begin{equation} \label{overunder}
        \begin{cases}
            \overline{x}_{i}=r\Big(\sqrt{1-t^2}\cos\frac{2(i-1)\pi}m, \sqrt{1-t^2} \sin\frac{2(i-1)\pi}m,t\Big), \quad i=1,\ldots, m, \\[5mm]
            \underline{x}_{i}=r\Big(\sqrt{1-t^2}\cos\frac{2(i-1)\pi}m,\sqrt{1-t^2}  \sin\frac{2(i-1)\pi}m, -t\Big), \quad i=1,\ldots,m.
        \end{cases}
    \end{equation}
    The parameters $t$ and $r$ are positive numbers and the range of them are chosen as follows
     \begin{equation}\label{par1}
        t\in\big[\alpha_{1}(\ln m)^{-\frac12},\alpha_{2}(\ln m)^{-\frac12}\big], \quad  r\in[\beta_{1}(m\ln m)^{\frac1{1-q}}, \beta_{2}(m\ln m)^{\frac1{1-q}}],
     \end{equation}
    for $\alpha_{1},\alpha_{2},\beta_{1},\beta_{2}$ fixed positive constants, independent of $m$.
    \par
    Define the approximate solution  as
    \begin{align}\label{Wrt1}
        W_{r,t}(y)=\sum_{i=1}^mU_{\overline{x}_{i}}(y)+
        \sum_{i=1}^m U_{\underline{x}_{i}}(y),
    \end{align}
    where $U_{\overline{x}_{i}}(y)=U(y-\overline{x}_{i}), U_{\underline{x}_{i}}(y)=U(y-\underline{x}_{i})$ and $m$ is large enough.

    In the paper, we will prove that for any $m$ sufficiently large problem \eqref{SN0} has a new type of solutions $u_{m} $ with the form
    \begin{equation}\label{um}
        u_{m}(y)\sim W_{r,t}(y),
    \end{equation}
    as $m\to\infty$. The solutions will have  polygonal symmetry in the $(y_{1},y_{2})$-plane and be even in the $y_{3}$ direction. Particularly, they do not belong to the same class of symmetry as the solutions built in \cite{HuY}, because we have one more parameter. In fact, if we take $t=0$ in \eqref{overunder}, we have $\overline{x}_{i}=\underline{x}_{i}$ for any $i$ and the two constructions  \eqref{uuk} and \eqref{Wrt1}-\eqref{um} are the same.  Assuming that the range of the parameters $t$ and $r$ is the same as \eqref{par1}, the two constructions \eqref{uuk} and \eqref{um}-\eqref{Wrt1} are different. The main distinction between the present construction and the one in \cite{HuY} is that there exist two parameters $r,t$ to choose in the locations $\underline{x}_{i},    \overline{x}_{i}$ of the bumps in \eqref{Wrt1}. Although, we use the method in \cite{Duan}, the non-local term brings some new difficulties, which involves many complex and technical estimates. We would like to stress that the energy estimate is different from \cite{Duan} because of the appearance of the non-local term. And we have to choose different range of $r$ and $t$ to ensure the mapping is contractible in the proof of theorem $\ref{main1}$. Compared with \cite{Duan}, we also check that the mapping is contractible by a different method.
\par
  In the following subsection, we will discuss  our result in details.
\par
    In this paper, we use $C,C_{i}$, or $\theta,\theta_{i},\mu,\mu_{i}$, $i=0,1,\ldots$ to denote fixed constants. Moreover, we also use the common notation by writing $O_{m}(F(r,t)),o_{m}(F(r,t))$ for the functions satisfying
    \begin{align*}
        \text{if} \quad G(r,t) \in O_{m}(F(r,t)) \quad \text{then }\quad  {\lim_{m \to +\infty}} \Big|\,  \frac{G(r,h)}{F(r,t)} \, \Big|  \leq C< + \infty,
    \end{align*}
    and
    \begin{align*}
        \text{if} \quad G(r,t) \in o_{m}(F(r,t))\quad \text{then }\quad  {\lim_{m \to +\infty}}\frac{G(r,t)}{F(r,t)} =0.
    \end{align*}
%We note that  for  the point $ x_{j}^*$.

\subsection{Main result and scheme of the proof }
\textsc{}
\vspace{2mm}

    For $ i= 1,\cdots, m$, we divide    $\mathbb{R}^3$ into $m$ parts:
\begin{align*}
\Omega_{i} : = &\Big\{  y = (y_{1}, y_{2}, y_{3}) \in \mathbb{R}^3 \nonumber
:       \langle  \frac{ (y_{1}, y_{2})}{|(y_{1}, y_{2}) |},           ( \cos{ \frac{2(i-1) \pi }{m}     },    \sin{ \frac{2( i-1) \pi }{m}     } )   \rangle_{\mathbb{R}^2 }\geq \cos{ \frac \pi m}          \Big\},
\end{align*}
where $ \langle   ,  \rangle_{\mathbb{R}^2 } $  denotes  the dot product in $\mathbb{R}^2$.
 For $\Omega_{i}$,   we divide  it into  two parts:
\begin{align*}
\Omega_{i}^+ = & \Big\{  y:    y = (y_{1}, y_{2}, y_{3})  \in   \Omega_{i}, y_{3}\geq0 \Big\},
\end{align*}
\begin{align*}
\Omega_{i}^- = & \Big\{  y:  y = (y_{1}, y_{2}, y_{3})   \in   \Omega_{i}, y_{3}<0 \Big\}.
\end{align*}
We have that
\begin{align*}
\mathbb{R}^3 =  \cup_{i=1}^m   \Omega_{i},   \quad  \Omega_{i} =     \Omega_{i}^+    \cup    \Omega_{i}^-
\end{align*}
and  the  interior of
\begin{align*}
 \Omega_{i}  \cap   \Omega_{j},        \quad   \Omega_{i}^+    \cap    \Omega_{i}^-
\end{align*}
   are   empty sets for $j\neq i$.

\medskip

Now we  define the  symmetric Sobolev space:
\begin{align*}
H= \Big\{  & u : u \in H^1(\mathbb{R}^3),    \text{ $u$   is even in   $ y_{2}, $}  \quad   u \Big( \sqrt{y_{1}^2+y_{2}^2} \cos \theta, \sqrt{y_{1}^2+y_{2}^2} \sin \theta,   y_{3}  \Big)
\nonumber
\\
     &=  u \Big( \sqrt{y_{1}^2+y_{2}^2} \cos { \big( \theta+   \frac{2i\pi}{m}  \big)  }, \sqrt{y_{1}^2+y_{2}^2} \sin { \big( \theta+   \frac{2i\pi}{m}  \big)  }, y_{3} \Big)         \Big\},
\end{align*}
where $ \theta =  \arctan{\frac {y_{2}}{ y_{1}} }$.
\par
\vspace{2mm}
   In the paper, we always assume
\begin{align}\label{H2}
(r,t) \in \mathbb{S}_{m}
 & =: \Big [  \Big(\Big(\frac{A_{1}}{16q( \pi)^{2}    }\Big)^{\frac{1}{1-q}}- \beta_{0}  \Big)   (m \ln m)^{\frac{1}{1-q}},   \,       \Big(\Big(\frac{A_{1}}{16q( \pi)^{2}    }\Big)^{\frac{1}{1-q}}+ \beta_{0}  \Big)  ( m \ln m)^{\frac{1}{1-q}} \Big]  \nonumber
 \\[1mm]
 & \qquad
 \times\Big[   \Big(1 - \alpha_{0} \Big) (  \ln m)^{-\frac{1}{2}},    \,   \Big(1 + \alpha_{0} \Big) (  \ln m)^{-\frac{1}{2}} \Big],
\end{align}
for some $ \alpha_{0}, \beta_{0}>0$  small enough, and independent of $m$.  We can refer to Remark \ref{remark6} to discuss the  assumption \eqref{H2}  for $(r,t)$.

The following is our main result.
\vspace{3mm}
\begin{theorem}\label{main1}
Assume that  $ V(|y|) $ satisfies  $(\rm H_{a})$ and the parameters $(r,t)$ satisfy \eqref{H2}. So there exists an integer $m_{0} $,   such that for any integer $m\geq m_{0}$,
\eqref{SN0} has a solution $u_{m} $ of the form
\begin{align}\label{u_k}
u_{m} = W_{r_{m}, t_{m}}(y) + \omega_{m}(y),
\end{align}
where $ \omega_{m} \in H$,  $(r_{m},s_{m}) \in \mathbb{S}_{m}  $ and  $ \omega_{m} $ satisfies
\begin{align*}
  \int_{\mathbb{R}^3 }  \big( | \nabla {\omega}_{m} |^2 + V(y) |\omega_{m}|^2 \big)  \to 0, \quad {\mbox {as}} \quad m \to \infty.
\end{align*}
\qed
\end{theorem}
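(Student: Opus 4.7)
The plan is a Lyapunov--Schmidt finite-dimensional reduction adapted to the nonlocal Schr\"odinger--Newton problem \eqref{SSN}. Working in the symmetric class $H$ (and implicitly also imposing evenness in $y_{3}$, which is preserved by the ansatz since $\overline{x}_{i}$ and $\underline{x}_{i}$ are reflections of each other through $\{y_{3}=0\}$), I seek $u = W_{r,t} + \omega$ with $\omega$ orthogonal in $H$ to
\begin{equation*}
Z_{r,t} := \mathrm{span}\{\partial_{r}W_{r,t},\, \partial_{t}W_{r,t}\}.
\end{equation*}
Writing $J$ for the energy of \eqref{SSN}, the perturbed equation takes the form
$L_{r,t}\omega = E_{r,t} + N_{r,t}(\omega),$
where $E_{r,t}$ is the residual of $W_{r,t}$, $L_{r,t}\omega = -\Delta\omega + V\omega - \Psi_{W_{r,t}}\omega - 2W_{r,t}\,\Psi'_{W_{r,t}}[\omega]$ is the linearization (including the nonlocal pieces), and $N_{r,t}(\omega)$ gathers the quadratic and cubic remainders in $\omega$.

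The reduction proceeds in three stages. \emph{Stage 1: error estimate.} In a suitable weighted norm, $(V-1)W_{r,t}$ contributes the leading term $\mathrm{O}(b/r^{q})$ per bump by $(\rm H_{a})$, while the nonlocal mismatch $\Psi_{W_{r,t}}W_{r,t} - \sum \Psi_{U_{\overline{x}_{i}}}U_{\overline{x}_{i}} - \sum \Psi_{U_{\underline{x}_{i}}}U_{\underline{x}_{i}}$ is controlled, via the decays \eqref{decay}--\eqref{decay1}, by $U$ evaluated at the two nearest distances $d_{\mathrm{pol}} := |\overline{x}_{1}-\overline{x}_{2}| \sim 2\pi r\sqrt{1-t^{2}}/m$ and $d_{\mathrm{cr}} := |\overline{x}_{1}-\underline{x}_{1}| = 2rt$. \emph{Stage 2: linear inversion.} Using the nondegeneracy of the ground state $(U,\Psi)$ from \cite{WeiJmp} together with a localization on the sectors $\Omega_{i}^{\pm}$, one obtains a uniform coercivity bound for $L_{r,t}$ on $Z_{r,t}^{\perp}\cap H$. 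A contraction-mapping argument in a weighted Banach space then produces a unique solution $\omega_{r,t}\in Z_{r,t}^{\perp}$ of the projected equation satisfying $\|\omega_{r,t}\| \lesssim \|E_{r,t}\|_{\ast} = o_{m}(1)$ uniformly for $(r,t)\in \mathbb{S}_{m}$. \emph{Stage 3: reduced functional.} Identifying the Lagrange multipliers of the projected equation with the partial derivatives of
\begin{equation*}
I_{m}(r,t) := J\bigl(W_{r,t} + \omega_{r,t}\bigr),
\end{equation*}
the existence of $u_{m}$ is reduced to finding a critical point of $I_{m}$ in the interior of $\mathbb{S}_{m}$.

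The heart of the argument is the asymptotic expansion
\begin{equation*}
I_{m}(r,t) = 2m\,J(U) + a_{1}\,\frac{b\,m}{r^{q}} - a_{2}\, m\, U(d_{\mathrm{pol}}) - a_{3}\, m\, U(d_{\mathrm{cr}}) + \text{lower order},
\end{equation*}
with positive constants $a_{1}, a_{2}, a_{3}$ in which $a_{2}, a_{3}$ are generated by the nonlocal convolution and so differ numerically from the analogous constants in \cite{Duan}. Inserting the precise asymptotics of $U$ from \eqref{decay} and computing $\partial_{r}I_{m}$ and $\partial_{t}I_{m}$, the critical-point system decouples at leading order into a balance between the potential term and the polygonal interaction (which pins $r\sim (m\ln m)^{1/(1-q)}$ and dictates the prefactor in the definition of $\mathbb{S}_{m}$) and a balance between the polygonal and cross interactions (which pins $t\sim(\ln m)^{-1/2}$). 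A direct topological/degree argument applied to the leading two-variable expansion then produces the interior critical point $(r_{m},t_{m})\in \mathbb{S}_{m}$, and hence the solution $u_{m} = W_{r_{m},t_{m}} + \omega_{r_{m},t_{m}}$.

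The principal obstacle, as the authors stress, is the nonlocal term. It forces $L_{r,t}$ to be a genuinely nonlocal operator, so the coercivity on $Z_{r,t}^{\perp}$ and the bound on $N_{r,t}$ require controlling cross-sector integrals $\int |x-y|^{-1}W_{r,t}(y)\omega(y)\,dy$ through Hardy--Littlewood--Sobolev-type estimates rather than the pointwise nonlinear bounds used in \cite{Duan}. This is also why the interaction constants $a_{2}, a_{3}$, and consequently the admissible window $\mathbb{S}_{m}$ in \eqref{H2}, must be recomputed and shifted relative to \cite{Duan}: only for this recalibrated window is the fixed-point mapping a uniform contraction and does the reduced functional admit a nondegenerate interior critical point. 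Once these nonlocal estimates are established, the remaining steps follow the standard Lyapunov--Schmidt template.
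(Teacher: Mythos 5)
The overall Lyapunov--Schmidt template you describe (coercivity of the linearization on a constrained subspace, a contraction for the projected equation, a reduction to a two-parameter variational problem in $(r,t)$) is indeed the strategy used in the paper, and you correctly flag the role of Hardy--Littlewood--Sobolev estimates in the nonlocal linear and nonlinear bounds. However, the expansion you place at the ``heart of the argument'' is wrong for this problem, and the error is structural, not a matter of renaming constants. You write
\begin{equation*}
I_m(r,t) = 2m\,J(U) + a_1\frac{bm}{r^q} - a_2\, m\, U(d_{\mathrm{pol}}) - a_3\, m\, U(d_{\mathrm{cr}}) + \text{l.o.t.},
\end{equation*}
with $U$ evaluated at the nearest-neighbour distances, which is the shape appropriate to the \emph{local} equation $-\Delta u+Vu=u^p$ in \cite{Duan}, where the bump--bump interaction $\int U_{x_i}^{p}U_{x_j}\sim U(|x_i-x_j|)$ decays exponentially. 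In the Schr\"odinger--Newton system the dominant cross-bump contribution is mediated by the Newtonian potential and is algebraic: by \eqref{decay1}, $\Psi_{U_{\overline{x}_1}}(x)\sim A_1/(8\pi|x-\overline{x}_1|)$ at large distance, so $\int\Psi_{U_{\overline{x}_1}}U_{\overline{x}_i}^2\sim A_1^2/(8\pi|\overline{x}_1-\overline{x}_i|)$, and summing over the polygon gives $\tfrac{A_1^2}{8\pi^2}\tfrac{m\ln m}{r\sqrt{1-t^2}}$ (Lemma \ref{lemma2}), with the opposite-layer sum similarly producing $\sim\tfrac{m}{r\sqrt{1-t^2}}\ln\tfrac{\pi}{t}$. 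The correct expansion is Proposition \ref{func}, whose interaction terms are $\sim m^{2}\ln m/r$ and $\sim(m^{2}/r)\ln(\pi/t)$, not $m\,U(d_{\mathrm{pol}})$ or $m\,U(d_{\mathrm{cr}})$. Your exponential version does not yield the scalings you cite: balancing $b/r^{q}$ against $e^{-2\pi r\sqrt{1-t^2}/m}$ gives $r\sim m\ln m$, and the corresponding $t$-balance gives $t\sim 1/m$, whereas the algebraic balance (carried out in Remark \ref{remark6}) gives precisely $r\sim(m\ln m)^{1/(1-q)}$ and $t\sim(\ln m)^{-1/2}$, which is the window \eqref{H2} and the reason for the restriction $\tfrac12\le q<1$. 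So ``recomputing $a_2,a_3$'' is not enough; the functional form of the interaction changes, and the admissible set $\mathbb{S}_m$, the error and linear estimates, and the critical-point argument must all be rebuilt around the algebraic tail of $\Psi_U$.

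Two smaller departures, less serious but worth noting. The paper does not impose $y_3$-evenness: the space $H$ requires only $y_2$-evenness and $\tfrac{2\pi}{m}$-rotation invariance, and the constrained space $\mathbb{E}_1$ is defined by the twisted pairings $\int T[U_{\overline{x}_i}^2]\overline{\mathbb{Z}}_{\ell i}v+2T[U_{\overline{x}_i}\overline{\mathbb{Z}}_{\ell i}]U_{\overline{x}_i}v=0$ (and their $\underline{x}_i$ analogues) against all $4m$ individual-bump derivatives, not by $H^1$-orthogonality against the two global derivatives $\partial_rW_{r,t}$, $\partial_tW_{r,t}$ that you propose; either choice can in principle work, but it changes the cancellations used in Lemma \ref{lemmainverse} and in Proposition \ref{pro2.5}. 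Finally, the interior critical point of the reduced functional is found by an explicit contraction on the first-order system \eqref{aot} followed by a Hessian check, rather than the degree argument you gesture at---again substitutable, but only once the energy expansion is fixed.
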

We want to prove  Theorem  $ \ref{main1} $ by using the Lyapunov- Schmidt reduction technique adapted to our context as developed in \cite{wei&yan}. We  sketch the scheme of the proof briefly. The critical point $u$ of the energy functional $I$ is defined as follows:
\begin{equation}\label{functional}
 I(u)=\frac{1}{2}\int_{\R^3}(|\nabla u|^2 +V(|x|)u^2)-\frac{1}{32\pi}\int_{\R^3}\int_{\R^3}\frac{u^2(y)u^2(x)}{|x-y|},
\end{equation}
which corresponds to a solution for \eqref{main1}. For any m sufficiently large and  for any ~$(r,t) \in \mathbb{S}_{m}$~, assuming that the critical point of the energy functional $I$  has the following form ~$u= W_{r,t} + \phi$~, the solution $u$ will have the form ~$u= W_{r,t}   + \phi$~.
\par
\noindent
Furthermore, assume that  $\phi\in H^1(\R^3)$ satisfies the following eigenvalue problem :
\begin{equation*}
   -\Delta  \phi+\phi=\frac{1}{8\pi}\Big(\int_{\R^3}\frac{U^2(y)}{|x-y|}dy \Big) \phi +\frac{1}{4\pi}\Big(\int_{\R^3}\frac{U(y)\phi(y)}{|x-y|}dy\Big) U.
\end{equation*}
Then, $ \phi $ can be spanned by  $ \{ \frac{\partial U}{\partial {x_{1}}  }, \frac{\partial U}{\partial {x_{2}}  }, \frac{\partial U}{\partial {x_{3}} } \}$, that is to say
\begin{equation*}
  \phi\in span\left\{\frac{\partial U}{\partial x_{i}}, i=1,2,3\right\}.
\end{equation*}

We define the inner product and norm on the space $H^1(\R^3)$ by
  \begin{equation*}
   \langle u,v\rangle=\int_{\R^3}(\nabla u \nabla v+V(|x|)u v)\quad \mbox{and}\quad
     \|u\| =\langle u,u \rangle ^{\frac{1}{2}},
  \end{equation*}
   respectively. Noting that $V$ is bounded, so the norm $\|\cdot\|$
is equivalent to the standard norm.
 %Throughout this paper,    $C$  denotes various positive constants whose exact value is inessential.
  Particularly, by Lemma \ref{lemma0}, we have
 %$\|\cdot\|_p$ represents the norm in
\begin{equation}\label{psi-norm}
   ||\Psi_u||_{D^{1,2}(\R^3)}^2=\int_{\R^3}\Psi_u u^2 \leq C\|u\|^4_{L^{\frac{12}5}(\R^3)}\leq C||u||^4.
 \end{equation}
 Throughout the paper,  $C$  denotes various positive constants whose exact value is not essential.
We define
\begin{align*}
R(\phi ) =     I (W_{r,t}   + \phi ),       \quad  \phi \in    \mathbb{E}_{1}.
\end{align*}
The space $\mathbb{E}_{1}$ is defined later. \medskip
 For $ i = 1, \cdots, m, $    we define
 \begin{align*}
 \overline{ \mathbb{Z}}_{1i}  =    \frac{\partial U_ {\overline{x}_{i}} }{\partial r}, \quad \underline{ \mathbb{Z}}_{1i}  =    \frac{\partial U_ {\underline{x}_i} }{\partial r}, \quad  \overline{ \mathbb{Z}}_{2i}  =    \frac{\partial U_ {\overline{x}_i} }{\partial t},  \quad \underline{ \mathbb{Z}}_{2i}  =    \frac{\partial U_ {\underline{x}_{i}} }{\partial t}.
 \end{align*}
 The constrained  space is defined as follows
\begin{align}\label{SpaceE}
\mathbb{E}_{1}  = \Big\{ v:   v\in H_{a},      \quad \int_{{\mathbb{R}}^3 } &T[ U_{\overline{x}_{i}}^{2} ]  \overline{ \mathbb{Z}}_{\ell i} v +2T[ U_{\overline{x}_{i}} \overline{ \mathbb{Z}}_{\ell i}] U_{\overline{x}_{i}}  v= 0  \quad\text{and}    \nonumber
\\[2mm]
  \quad  & \int_{{\mathbb{R}}^3 } T[ U_{\underline{x}_{i}}^{2} ]\underline {\mathbb{Z}}_{\ell i}  v+2T[ U_{\underline{x}_{i}} \underline {\mathbb{Z}}_{\ell i}]U_{\underline{x}_{i}}   v = 0, \quad i = 1, \cdots,m, \quad \ell = 1,2      \Big\},
 \end{align}
 where
 \begin{equation}\label{T[]}
   T[u v](x)=\frac{1}{4\pi} \int_{\R^3}\frac{u(y)v(y)}{|x-y|}dy.
 \end{equation}
Expand $ R(\phi )$ in the following:
\begin{align*}
R(\phi ) = R(0) + {  \bf{ l } }  (\phi) + \frac 12  \langle{\bf{L}} \phi,  \phi\rangle  -  {  \bf{ N } }(\phi),    \quad \phi \in   \mathbb{E}_{1},
\end{align*}
where

\begin{align*}
{ \bf{ l } }  (\phi) & = \sum_{i =1}^m   \int_{  \mathbb{R}^3} \big( V(|y|) - 1      \big)  \big( U_{ \overline{x}_{i}  } + U_{\underline{x}_{i}  }\big)  \phi\\
 &\quad+ \frac{1}{2}  \int_{  \mathbb{R}^3} \Big(  \sum_{i =1}^m T[(U_{ \overline{x}_{i} } )^2]  U_{ \overline{x}_{i}  } + \sum_{i =1}^m T[(U_{\underline{x}_i  }   \big)^2]U_{\underline{x}_{i} }-T[W_{r,t}^2 ]  W_{r,t}   \Big) \phi,
\end{align*}

and
\begin{align}\label{N0}
{ \bf{ N } }(\phi) &=  \frac{1}{32\pi}\Big(\int_{\R^3}\int_{\R^3}\frac{(                       W_{r,t}+\phi)^2 (y)(          W_{r,t}+\phi)^2(x)}{|x-y|}-\int_{\R^3}\int_{\R^3}\frac{( W_{r,t})^2 (y)( W_{r,t})^2(x)}{|x-y|}\Big)\nonumber
   \\[0.02mm]&
\quad
-\frac{1}{2}  \int_{  \mathbb{R}^3} T[W_{r,t}^2 ]W_{r,t}\phi-\frac{1}{16\pi}\int_{\R^3}\Big(\int_{\R^3}\frac{( W_{r,t})^2 (y)}{|x-y|}dy\Big)\phi^2\nonumber
   \\[0.02mm]&
\quad-\frac{1}{8\pi}\int_{\R^3}\Big(\int_{\R^3}\frac{ W_{r,t}\phi}{|x-y|}dy\Big)W_{r,t}\phi.
\end{align}
Furthermore, ${\bf{L}} $ is  a   linear operator  from $  \mathbb{E}_{1} $ to $ \mathbb{E}_{1}$,  which satisfies  $   \text{for all }   ~ v_{0}, v_{1} \in       \mathbb{E}_{1} ,$
\begin{align*}
 \langle{\bf{L}} v_{0},  v_{1} \rangle & =     \int_{{\mathbb{R}}^3 }      \nabla {v_{0}} \nabla {v_{1}}  + V (|y|) v_{0} v_{1}  -\frac{1}{8\pi}\int_{\R^3}\Big(\int_{\R^3}\frac{( W_{r,t})^2 (y)}{|x-y|}dy\Big) v_{0} v_{1}
 \\&\quad-\frac{1}{4\pi}\int_{\R^3}\Big(\int_{\R^3}\frac{ W_{r,t}v_{0}}{|x-y|}dy\Big)W_{r,t}v_{1}.
 \end{align*}
Since  $W_{r,t}$ is bounded and has the symmetries of the space $H_{a}$, it is easy to  prove that $  {\bf L}  $  is a  bounded   linear operator from $  \mathbb{E}_{1}$ to $\mathbb{E}_{1}$ by  Hardy-Littlewood-Sobolev inequality.
We are going to prove that $ {  \bf{ l } }  (\phi)$ is a bounded linear functional in $ \mathbb{E}_{1}$. Thus  there exists an ${  \bf{ l } }_m\in  \mathbb{E}_{1}$, such that  $$   {  \bf{ l } }  (\phi)=    \langle  {  \bf{ l } }_{m}, \phi  \rangle.$$
Then  a critical point of $ R(\phi )$  is also  a solution to
\begin{align}\label{equivalentequat}
{  \bf{ l } }_m +{\bf{L}} \phi-  {  \bf{ N } }' (\phi) = 0.
\end{align}
Thus the function $u$ which has the form $u= W_{r,t}   + \phi$ will be a solution to \eqref{SN0} if   $ \phi$ is a solution to \eqref{equivalentequat}.
Firstly the strategy now consists  presenting that, for any $ (r,t) \in \mathbb{S}_m $, there is a function  $\phi_{r,t} $  solution  to \eqref{equivalentequat}  in the  space $\mathbb{E}_{1}$ (see Proposition \ref{propos2}).   Secondly, we will
 reduce the problem to find a critical point $u$ of  $I(u)$ to  the problem of finding a stable critical point $(r^\star,t^\star)$ of the  function $$  \overline{F}(r,t)  =  I (W_{r,t}   + \phi_{r,t}  ).$$
We will present that there is such a critical point  in the set $\mathbb{S}_m$. Lots of properties of the solutions  follows by the construction
(see \eqref{H2}).

 \vspace{3mm}

\noindent{\textbf{Plan of the paper}}\\
We write the paper in the following. In section \ref{s2},
we do the finite-dimensional reduction. And we are going to prove  Theorem \ref{main1} in section \ref{s3}.
In the Appendix \ref{sa}, we give some known results and some technical estimates.
Finally, in the Appendix \ref{sb} we make some energy expansions.

\section{  Finite-dimensional reduction}\label{s2}
\medskip

The following lemma proves the existence and boundedness of inverse operator of ${\bf{L}}$ in    $ \mathbb{E}_{1}. $

\medskip
\begin{lemma}\label{lemmainverse}
There is a constant $ \xi > 0$, independent of $m$, such that for any $ (r,t)\in  \mathbb{S}_m$
$$  \|{\bf{L}} v\| \geq \xi \| v\|, \quad v\in \mathbb{E}_{1}. $$
\end{lemma}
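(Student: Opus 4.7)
I will argue by contradiction in the standard Lyapunov--Schmidt style. Suppose otherwise: there exist sequences $m_n\to\infty$, $(r_n,t_n)\in\mathbb{S}_{m_n}$ and $v_n\in\mathbb{E}_1$ with $\|v_n\|=1$ and $\|\mathbf{L} v_n\|\to 0$. By the definition of $\mathbf{L}$, one has
\begin{equation*}
\langle \mathbf{L} v_n, v_n\rangle \;=\; \|v_n\|^2 \;-\; \tfrac{1}{2}\!\int_{\mathbb{R}^3} T[W_{r_n,t_n}^2]\, v_n^2 \;-\; \int_{\mathbb{R}^3} T[W_{r_n,t_n} v_n]\, W_{r_n,t_n}\, v_n,
\end{equation*}
so it suffices to prove that the two nonlocal quadratic forms on the right tend to $0$: this forces $1=o(1)$, the desired contradiction.

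The second step is localization. Thanks to the polygonal invariance and the evenness in $y_2$ encoded in $H_a$, it is enough to study $v_n$ on one fundamental cell, and in particular to understand the recentered profiles
\begin{equation*}
\overline{v}_n(z) := v_n(z+\overline{x}_{1,n}),\qquad \underline{v}_n(z) := v_n(z+\underline{x}_{1,n}).
\end{equation*}
The range \eqref{H2} for $(r_n,t_n)$ ensures that on any fixed ball $B_R$ the shifted locations of the remaining $2m_n-1$ bumps escape to infinity, so $W_{r_n,t_n}(\,\cdot\,+\overline{x}_{1,n})\to U$ and $V(\,\cdot\,+\overline{x}_{1,n})\to 1$ uniformly on $B_R$. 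Extracting a weak $H^1$-subsequential limit $\overline{v}_n\rightharpoonup \overline{v}_\infty$, Hardy--Littlewood--Sobolev together with Rellich compactness allows me to pass to the limit in the nonlocal terms of $\mathbf{L} v_n$, yielding
\begin{equation*}
-\Delta \overline{v}_\infty + \overline{v}_\infty - \tfrac{1}{2} T[U^2]\overline{v}_\infty - U\, T[U\overline{v}_\infty] \;=\; 0.
\end{equation*}
The nondegeneracy result recalled in the introduction (from \cite{WeiJmp}) then forces $\overline{v}_\infty \in \operatorname{span}\{\partial_{x_1}U,\partial_{x_2}U,\partial_{x_3}U\}$, and similarly for $\underline{v}_\infty$.

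Third, I will use the four orthogonality conditions defining $\mathbb{E}_1$ to force $\overline{v}_\infty \equiv \underline{v}_\infty \equiv 0$. Since $\overline{Z}_{\ell 1}$ is, up to sign, $\nabla U(\,\cdot\,-\overline{x}_1)$ contracted against the two independent direction vectors $\partial_r\overline{x}_1$ and $\partial_t\overline{x}_1$, both lying in the $y_1$--$y_3$ plane, letting $n\to\infty$ in
\begin{equation*}
\int_{\mathbb{R}^3} T[U_{\overline{x}_1}^2]\,\overline{Z}_{\ell 1}\, v_n \;+\; 2\int_{\mathbb{R}^3} T[U_{\overline{x}_1}\overline{Z}_{\ell 1}]\, U_{\overline{x}_1}\, v_n \;=\; 0,\qquad \ell = 1,2,
\end{equation*}
yields two independent linear relations on $\overline{v}_\infty$ that annihilate the $\partial_{x_1}U$ and $\partial_{x_3}U$ components, while the evenness in $y_2$ inherited by $\overline{v}_\infty$ kills the remaining $\partial_{x_2}U$ component (which is odd in $z_2$). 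Hence $\overline{v}_\infty \equiv 0$; the analogous argument at $\underline{x}_1$ gives $\underline{v}_\infty \equiv 0$. With these limits in hand I return to the two nonlocal forms displayed above, decompose the double integrals through the partition $\mathbb{R}^3 = \bigcup_i (\Omega_i^+\cup\Omega_i^-)$, and invoke the pointwise control $|W_{r_n,t_n}| \lesssim U_{\overline{x}_i} + U_{\underline{x}_i}$ on $\Omega_i^\pm$ (from \eqref{decay}): diagonal blocks vanish by the strong local convergence $\overline{v}_n,\underline{v}_n\to 0$ on compacts, and off-diagonal blocks vanish by bump separation.

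\textbf{Main obstacle.} The principal difficulty, stressed by the authors in the introduction, is the bilinear form $\int T[W_{r_n,t_n} v_n]\, W_{r_n,t_n}\, v_n$: unlike the local setting of \cite{Duan}, the Newton kernel $1/|x-y|$ couples distant bumps only polynomially, so vanishing of off-diagonal cross-terms is not automatic. The saving feature is that any two distinct centres in $\{\overline{x}_{i,n},\underline{x}_{i,n}\}$ are separated by at least $c\, r_n/m_n \sim c\,(m_n \ln m_n)^{1/(1-q)}/m_n \to \infty$ (since $q<1$); combined with the exponential decay \eqref{decay} of $U$, this suppresses the cross-contributions fast enough to be absorbed via Hardy--Littlewood--Sobolev, and is precisely why the parameter range \eqref{H2} is imposed.
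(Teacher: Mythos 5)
Your proposal follows the same contradiction--compactness strategy as the paper: extract a weak limit of the recentered sequence, invoke the nondegeneracy of $U$ from \cite{WeiJmp} to place the limit in $\operatorname{span}\{\partial_{y_1}U,\partial_{y_2}U,\partial_{y_3}U\}$, then kill the $\partial_{y_2}U$ component by $y_2$-evenness and the remaining two components by passing to the limit in the orthogonality constraints defining $\mathbb{E}_1$ (using that the $2\times 2$ change-of-variables matrix from $(\partial_r,\partial_t)$ to $(\partial_{y_1},\partial_{y_3})$ is nonsingular). That is precisely the paper's argument, down to the identification of the linearized equation \eqref{eqs0} and the orthogonality relations \eqref{condition3}--\eqref{condition4}.

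Two places where your sketch is looser than the written proof, though neither is a genuine gap. First, the closing step is not quite ``diagonal blocks vanish by strong local convergence'': the contribution from $\Omega_1\setminus B_R(\overline{x}_1)$ does not tend to $0$ for fixed $R$; the paper's Lemma \ref{lemma4} gives a quantitative bound $O(e^{-(1-\delta)R}+1/R)\int_{\Omega_1}v_m^2$, and one must let $R\to\infty$ \emph{after} $m\to\infty$ to absorb it. Because the Newton kernel decays only polynomially, this tail estimate is essential and is the technical content of Lemma \ref{lemma4}; a pure ``local strong convergence plus bump separation'' argument would leave this piece uncontrolled. Second, a bookkeeping point: with the normalization $\|v_n\|=1$ the restriction to $\Omega_1$ carries mass $1/m_n$, so the contradiction statement ``$1=o(1)$'' has to be read after the symmetry reduction to a single cell, as the paper makes explicit by normalizing $\|v_m\|^2=m$ and working with $\int_{\Omega_1}=1$. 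One minor enhancement in your version is that you track the second recentered profile $\underline{v}_n$ explicitly, which is a slightly more complete treatment of $\Omega_1^-$ than the paper records.
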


\begin{proof}
We prove it by contradiction.  Assuming that
there are  $t_{m}, r_{m}  \in   \mathbb{S}_{m},       v_{m} \in  \mathbb{E}_{1}  $ satisfying
\begin{align*}
\|{\bf{L}} v_{m}\| =  o_{m}(1) \| v_m\|,\quad m\rightarrow +\infty .
\end{align*}
Then we can easily get
\begin{align*}
\langle{\bf{L}} v_{m},      \varphi  \rangle     =     o_{m}(1) \| v_{m}\| \,  \|    \varphi    \|,    \quad \forall\,  \varphi \in  \mathbb{E}_{1}.
\end{align*}
Similar to \cite{wei&yan}, we assume $\| v_{m}\|^2   =m$.

Firstly, we claim that  if $\psi\in \mathbb{E}_{1}$, then $T[U\psi]$ satisfies
    \begin{equation}\label{pro-1}
T[U\psi] \mbox{ is even in   } y_{2},
    \end{equation}
  and
\begin{align}\label{pro-2}
& T[U\psi]\Big(\sqrt{y_{1}^{2}+y_{2}^{2}}\cos\theta,\sqrt{y_{1}^{2}+y_{2}^{2}}\sin\theta,y_{3}\Big)\nonumber
 \\[1mm]
& =T[U\psi]\Big(\sqrt{y_{1}^{2}+y_{2}^{2}}\cos\Big(\theta+\frac{2\pi i}{m}\Big),\sqrt{y_{1}^{2}+y_{2}^{2}}\sin\Big(\theta+\frac{2\pi i}{m}\Big),y_{3}\Big).
\end{align}
 Indeed, define $h=(h_{ij})\in SO(3)$ with $h=$diag$(1,-1,1).$ Obviously, $U\in\mathbb{E}_{1}$, which implies  $U(x)=U(hx)$, so is $\psi$. Then, we have
 \begin{align*}
  T[U\psi](h x)&=\frac{1}{4\pi}\int_{\R^3}\frac{U(y)\psi(y)}{|hx-y|}dy=-\frac{1}{4\pi}\int_{\R^3}\frac{U(h^{-1}y)\psi(h^{-1}y)}{|x-h^{-1}y|}dy
 \\& =\frac{1}{4\pi}\int_{\R^3}\frac{U(y)\psi(y)}{|x-y|}dy=T[U\psi](x),
 \end{align*}
  that is equivalent to  \eqref{pro-1}.  Similarly,  define  with the form as follows:
  \begin{equation*}
  h=
 \left(
  \begin{array}{ccc}
  \cos(\frac{2\pi i}{m})&-\sin(\frac{2\pi i}{m})& 0\\[1mm]
  \sin(\frac{2\pi i}{m})&cos(\frac{2\pi i}{m})& 0\\[1mm]
  0&0&1\\
  \end{array}
  \right).
\end{equation*}
So we can get \eqref{pro-2}.
With the symmetric property, we can get
\begin{align} \label{operatorL}
\langle{\bf{L}} v_{m},      \varphi  \rangle   &=   \int_{{\mathbb{R}}^3 }   \nabla {v_{m} } \nabla {\varphi }  + V (|y|) v_{m}  \varphi   -\frac{1}{8\pi}\int_{\R^3}\Big(\int_{\R^3}\frac{( W_{r,t})^2 (y)}{|x-y|}dy\Big) v_{m}  \varphi\nonumber
 \\[0.02mm]&\quad -\frac{1}{4\pi}\int_{\R^3}\Big(\int_{\R^3}\frac{ W_{r,t}v_{m}   }{|x-y|}dy\Big)W_{r,t}\varphi   \nonumber
 \\[0.02mm]
& = m     \int_{ \Omega_{1}}  \nabla {v_{m} } \nabla {\varphi }  + V (|y|) v_{m}  \varphi  -     \frac{1}{8\pi}\int_{ \Omega_1}\Big(\int_{\R^3}\frac{( W_{r,t})^2 (y)}{|x-y|}dy\Big)  v_{m}  \varphi
\nonumber
 \\[0.02mm]&
 \quad-\frac{1}{4\pi}\int_{ \Omega_{1}}\left(\int_{\R^3}\frac{ W_{r,t} v_{m}  }{|x-y|}dy\right)W_{r,t}\varphi    \nonumber
\\[0.02mm]
& =o_{m}(1) \| v_{m}\| \,  \|    \varphi    \|   =o(\sqrt m)  \,  \|    \varphi    \| .
\end{align}
Moreover we have
\begin{align}\label{norm}
  \int_{ \Omega_{1}}  | \nabla {v_{m} } |^2   + V (|y|) v_{m}^2    = 1.
\end{align}
Inserting  $  \varphi  =  v_{m} $ into \eqref{operatorL},    we can get immediately
\begin{align*}
  \int_{ \Omega_{1}}   |  \nabla {v_{m} } |^2 + V (|y|) v_{m}^2 -      \frac{1}{8\pi}\int_{ \Omega_{1}}\Big(\int_{\R^3}\frac{( W_{r,t})^2 (y)}{|x-y|}dy\Big)  v_{m} ^2 -\frac{1}{4\pi}\int_{ \Omega_{1}}\Big(\int_{\R^3}\frac{ W_{r,t} v_{m}  }{|x-y|}dy\Big)W_{r,t}v_{m} = o_{m}(1).
\end{align*}
Denote
$$ \overline{v}_{m} (y) = v_{m} (y+  \overline{x}_{1} ).  $$
For this sequence $ \overline{v}_{m} (y) $, we are able to  prove that  $ \overline{v}_{m} (y) $ has boundedness in $  H^1_{loc}(\mathbb{R}^3). $
Actually,
for  any $ \overline{R}>0$, since $ | \overline{x}_{2} -  \overline{x}_{1}| = 2r  \sqrt{1-t^2} \, \sin{      \frac \pi m } \geq  \frac{n}{4} \ln m,  $
we can choose $m$ sufficiently large such that  $ B_{\overline{R}(   \overline{x}_{1}) } \subset \Omega_{1}$.      Then, we can get
\begin{align}
  \int_{  B_{\overline{R} }(   0 ) }  \big ( \,     | \nabla { \overline{v}_{m} } |^2   + V (|y  |)  \overline{v}_{m}^2 \big ) & =  \int_{  B_{\overline{R}(   \overline{x}_{1}) }}  \big (  \,    | \nabla {  v_{m} } |^2   + V (|y- \overline{x}_{1}  |) { v}_{m}^2\, \big )  \nonumber
  \\[2mm]
    & \leq     \int_{ \Omega_{1}}  \big (\,      | \nabla {{ v}_{m} } |^2   + V (|y- \overline{x}_{1}|)  { v}_{m}^2 \,  \big )\leq 1.
\end{align}
 So we have
 \begin{align}\label{condition0}
 \overline{v}_m \rightarrow   \bar{ v }\quad  \text{ weakly in } H^1_{loc}(\mathbb{R}^3),
 \end{align}
 and  \begin{align}\label{condition5}
 \overline{v}_{m} \rightarrow  \overline{v} \quad  \text{ strongly in } L^2_{loc}(\mathbb{R}^3).
 \end{align}
Since $\overline{v}_{m}$ is even in $y_{2}$,  we know that  $ {\bar v}$ is even in  $y_{2}$.
 By the orthogonal conditions of functions in $\mathbb{E}_{1} $
\begin{align*}
\int_{{\mathbb{R}}^3 } &T[ U_{\overline{x}_1}^{2} ] \frac{\partial  U_ {\overline{x}_{1}} }     {  \partial r }  v_{m} +2T[ U_{\overline{x}_{1}} \frac{\partial  U_ {\overline{x}_{1}} }     {  \partial r } ] U_{\overline{x}_{1}} v_{m}= 0,
\end{align*}
we know the following identity
\begin{align*}
\frac{\partial  U_ {\overline{x}_{1}} }     {  \partial r } =  - \sqrt{1-t^2} \,     \frac{ \partial U_ {\overline{x}_{1}} }{\partial {y_{1}} }     - t \,   \frac{ \partial U_ {\overline{x}_{1}} }{\partial {y_{3}} },
\end{align*}
 so we can obtain
\begin{equation} \label{condition1}
\int_{{\mathbb{R}}^3 } T[ U^2 ](\sqrt{1-t^2}  \frac{ \partial U }{\partial {x_{1}} } +t  \frac{ \partial U }{\partial {x_{3}} })\overline{ v }_{m} dx +2\int_{{\mathbb{R}}^3 } T[ U(\sqrt{1-t^2}  \frac{ \partial U }{\partial {y_{1}} } +t  \frac{ \partial U }{\partial {y_{3}} })]  U \overline{ v }_{m} dx = 0.
\end{equation}
Similarly, combining
\begin{align*}
\int_{{\mathbb{R}}^3 } &T[ U_{\overline{x}_{1}}^{2} ] \frac{\partial  U_ {\overline{x}_{1}} }     {  \partial t }  v_{m} +2T[ U_{\overline{x}_{1}} \frac{\partial  U_ {\overline{x}_{1}} }     {  \partial t } ] U_{\overline{x}_{1}} v_{m}= 0,
\end{align*}
and
\begin{align*}
\frac{\partial U_ {\overline{x}_{1}} }{\partial t } =    \frac{r t}{ \sqrt{1-t^2} }  \frac{ \partial U_ {\overline{x}_{1}} }{\partial {y_{1}} }- r \frac{ \partial U_ {\overline{x}_{1}} }{\partial {y_{3}} },
\end{align*}
we can obtain
\begin{equation} \label{condition2}
 \int_{{\mathbb{R}}^3 } T[ U^2 ](\frac{ r t}{ \sqrt{1-t^2} }  \frac{ \partial U }{\partial {x_{1}} }- r \frac{ \partial U }{\partial {x_{3}} } )\overline{ v }_{m} dx +2\int_{{\mathbb{R}}^3 } T[ U(\frac{ r t}{ \sqrt{1-t^2} }  \frac{ \partial U }{\partial {y_{1}} }- r \frac{ \partial U }{\partial {y_{3}} }]  U \overline{ v }_{m} dx = 0.
 \end{equation}
From \eqref{condition1}, \eqref{condition2},  we can get
\begin{align*}
 \int_{{\mathbb{R}}^3 } T[ U^2 ]  \frac{ \partial U }{\partial {x_{1}} }\overline{ v }_{m} dx +2\int_{{\mathbb{R}}^3 } T[ U \frac{ \partial U }{\partial {y_{1}} }]  U \overline{ v }_{m} dx = 0.
\end{align*}
 Letting  $m \rightarrow   + \infty$,  we can get
 \begin{align}\label{condition3}
 \int_{{\mathbb{R}}^3 } T[ U^2 ]  \frac{ \partial U }{\partial {x_{1}} }\overline{ v }dx +2\int_{{\mathbb{R}}^3 } T[ U \frac{ \partial U }{\partial {y_{1}} }]  U \overline{ v }dx = 0,
 \end{align}
and
\begin{align}\label{condition4}
 \int_{{\mathbb{R}}^3 } T[ U^2 ]  \frac{ \partial U }{\partial {x_{3}} }\overline{ v }dx +2\int_{{\mathbb{R}}^3 } T[ U \frac{ \partial U }{\partial {y_{3}} }]  U \overline{ v }dx = 0.
 \end{align}
In the following, we will prove that  $\overline{v}$ can solve
\begin{align}\label{eqs0}
- \Delta \phi  + \phi - \frac{1}{2}T[ U^2 ] \phi -T[ U\phi ] U =0, \quad \text{in }~ \mathbb{R}^3.
\end{align}
 Define  the constrained  space as follows:
  \begin{align*}
  {\overline{E} _{1}}^+ = \Bigg\{     \phi : \phi \in H^1( \mathbb{R}^3 ),       \int_{{\mathbb{R}}^3}  T[ U^2 ]  &\frac{ \partial U }{\partial {x_{1}} }\phi dx +2\int_{{\mathbb{R}}^3 } T\left[ U \frac{ \partial U }{\partial {y_{1}} }\right]  U \phi dx = 0\quad \text{and} \nonumber
\\[0.02mm]
& \int_{{\mathbb{R}}^3}  T[ U^2 ] \frac{ \partial U }{\partial {x_{3}} }\phi dx +2\int_{{\mathbb{R}}^3 } T\left[ U \frac{ \partial U }{\partial {y_{3}} }\right]  U \phi dx = 0
    \Bigg\}.
 \end{align*}
 To  prove  \eqref{eqs0},
  we provide a  claim firstly.\\
  \textbf{Claim 1}:  $\overline{v}  $ can solve
  \begin{align*}
- \Delta \phi  + \phi - \frac{1}{2}T[ U^2 ] \phi -T[ U\phi ] U =0, \quad \text{in }~   {\overline{E} _{1}}^+.
\end{align*}

Now we prove the   \textbf{Claim 1} .

For any $R>0 $, we let  $\phi \in C_{0}^\infty \big (B_{R}(0) \big) \cap   {\overline{E} _{1}}^+  $ which is even in $y_{2}$.  Then, we can denote $$ \phi_{m} (y)=: \phi (y -  \overline{x}_{1}  )  \in  C_{0}^\infty \big (B_R(  \overline{x}_{1}) \big).$$
 When we insert  $ \phi_{m} (y) =\varphi  $ into \eqref{operatorL} and   combine \eqref{condition0}, \eqref{condition5} and Lemma \ref{lemma1},  we can obtain
 \begin{align} \label{eqs1}
 \int_{\mathbb{R}^3 }  \Big( \nabla  \overline{v} \nabla\phi +  \overline{v} \phi - \frac{1}{2}T[ U^2 ] \phi\overline{v} -T[ U\phi ] U\overline{v} \Big) = 0.
 \end{align}
 Moreover, since $ \overline{v}$ and $ T[U\phi] $ are even in $y_{2}$, we can get   that \eqref{eqs1}   holds for all functions  $\phi  \in C_{0}^\infty \big ( B_{R}(  \overline{x}_{1} \big) \cap  {\overline{E} _{1}}^+  $ which is odd in $y_{2}$ with the symmetric conditions. So, \eqref{eqs1} is valid for all functions $\phi  \in C_{0}^\infty \big ( B_{R}(  \overline{x}_{1} \big) \cap   {\overline{E} _{1}}^+  $.
Density argument indicates that,
\begin{align} \label{eqs2}
 \int_{\mathbb{R}^N }  \Big( \nabla  \overline{v} \nabla\phi +  \overline{v} \phi -\frac{1}{2}T[ U^2 ] \phi\overline{v} -T[ U\phi ] U\overline{v} \Big) = 0,   \quad \text{for all} ~ \phi \in {\overline{E} _{1}}^+.
 \end{align} The  \textbf{Claim 1} is proved.

Putting  \textbf{Claim 1} and the fact that  \eqref{eqs0} is valid  for   $ \phi = \frac{\partial U}{\partial y_{1}} $ and $ \phi = \frac{\partial U}{\partial y_{3}} $  together,  then we can obtain
  \begin{align}
 \int_{\mathbb{R}^3 }  \Big( \nabla  \overline{v} \nabla\phi +  \overline{v} \phi -\frac{1}{2}T[ U^2 ] \phi\overline{v} -T[ U\phi ] U\overline{v} \Big) = 0,   \quad \text{for all} ~ \phi \in  H^1(\mathbb{R}^3),
  \end{align}
that is \eqref{eqs0}. With the help of the Non-degeneracy result for $U$ and the fact that $ \overline{v} $ is even in   $y_{2}$,  we can derive
  \begin{align}  \label{condition4}
  \overline{v} = C_{1}  \frac{\partial U}{\partial y_{1}}+C_{3}  \frac{\partial U}{\partial y_{3}},
\end{align}
 for some constants $ C_{1}$ and $ C_{3}$.   Putting \eqref{condition3}, \eqref{condition4} together, we obtain
\begin{align*}
C_{1}=C_{3}=0.
\end{align*}
So we can get
\begin{align}\label{v0}
\overline{v} =0.
\end{align}
From \eqref{condition5} and \eqref{v0}, we have that
\begin{align}\label{vm}
  \int_{ B_{R}(  \overline{x}_{1} ) }     v_{m}^2  =  o_{m}(1).
\end{align}

From  Lemma \ref{lemma1}, we can know
$
W_{r,t} \leq C  e^{-(1-\delta) | y -   \overline{x}_{1}  | }.
$

From Lemma \ref{lemma1}, the symmetry and Hardy-Littlewood-Sobolev inequality, we have
\begin{align}\label{p1}
 &\frac{1}{4\pi}\int_{ \Omega_{1}}\Big(\int_{\R^3}\frac{ W_{r,t} v_{m}  }{|x-y|}dy\Big)W_{r,t}v_{m}  =\frac{1}{2\pi}\int_{ \Omega_{1}^{+}}\Big(\int_{\R^3}\frac{ W_{r,t} v_{m}  }{|x-y|}dy\Big)W_{r,t}v_{m}\nonumber
\\[1mm]
&\leq C m\int_{ \Omega_{1}^{+}}\Big(\int_{\Omega_{1}^{+}}\frac{ W_{r,t} v_{m}  }{|x-y|}dy\Big)W_{r,t}v_{m}\leq C m\int_{ \Omega_{1}^{+}}\Big(\int_{\Omega_{1}^{+}}\frac{ e^{-(1-\delta) | y -   \overline{x}_{1}  | } v_{m}  }{|x-y|}dy\Big)e^{-(1-\delta) | y -   \overline{x}_{1}  | }v_{m}\nonumber
\\[1mm]
&\leq C m\|e^{-(1-\delta) | y -   \overline{x}_{1}  | } v_{m}\|_{L^{\frac{6}{5}}}\|e^{-(1-\delta) | x -   \overline{x}_{1}  | } v_{m}\|_{L^{\frac{6}{5}}}\nonumber
\\[1mm]
&=C m\Big(  \int_{  \mathbb{R}^3 \setminus  B_{R}( | \overline{x}_{1} | )}  e^{-\frac{6}{5}(1-\delta) | y -   \overline{x}_{1}}  |v_{m}|^\frac{6}{5}+   \int_{    B_{R}( | \overline{x}_{1} | )}  e^{-\frac{6}{5}(1-\delta) | y -   \overline{x}_{1}}  |v_{m}|^\frac{6}{5}\Big)^\frac{5}{3}
\nonumber
\\[1mm]
&\leq C m o_{m}(1)=o(1),
\end{align}
and
\begin{align}\label{p2}
\frac{1}{8\pi}\int_{ \Omega_{1}}\Big(\int_{\R^3}\frac{( W_{r,t})^2 (y)}{|x-y|}dy\Big)  v_{m} ^2 \leq C\int_{ \Omega_{1}^{+}}\Big(\int_{\R^3}\frac{(U_{ \overline{x}_{1}  }+\sum_{i =2}^m U_{ \overline{x}_{i}  } )^2 (y)}{|x-y|}dy\Big)  v_{m} ^2.
\end{align}
By Lemma \ref{lemma4} and \eqref{vm} , we have
\begin{align}\label{p3}
\int_{ \Omega_{1}^{+}}\Big(\int_{\R^3}\frac{(U_{ \overline{x}_{1}  }+\sum_{i =2}^m U_{ \overline{x}_{i}  } )^2 (y)}{|x-y|}dy\Big)  v_{m} ^2&\leq C\Big( e^{- (1-\delta)R  }+\frac{1}{R} \Big)\int_{ \Omega_{1}}v_{m}^2 (x) dx+C \int_{  B_{R}(  \overline{x}_{1}) }v_{m} ^2 (x)dx\nonumber
\\[1mm]
&=C\Big( e^{- (1-\delta)R  }+\frac{1}{R} \Big)\int_{ \Omega_{1}}v_{m}^2 (x) dx+o(1).
\end{align}
By \eqref{p1}, \eqref{p2},  \eqref{p3} and letting $R$ and $m$ sufficiently large and inserting $\varphi=v_{m}$ in \eqref{operatorL}, we can obtain,
\begin{align*}
o_m(1) & =\int_{ \Omega_{1}}     |  \nabla {v_{m} } |^2 + V (|y|) v_{m}^2 -  \frac{1}{2}\int_{ \Omega_{1}}T[( W_{r,t})^2]  v_{m} ^2 -\int_{ \Omega_{1}}T[W_{r,t} v_{m}]W_{r,t}v_{m}    \nonumber
\\[1mm]
&=   \int_{ \Omega_{1}}    |  \nabla {v_{m} } |^2 + V (|y|) v_{m}^2 -  \frac{1}{8\pi}\int_{ \Omega_{1}}\Big(\int_{\R^3}\frac{( W_{r,t})^2 (y)}{|x-y|}dy\Big)  v_{m} ^2 -\frac{1}{4\pi}\int_{ \Omega_{1}}\Big(\int_{\R^3}\frac{ W_{r,t} v_{m}  }{|x-y|}dy\Big)W_{r,t}v_{m}     \nonumber
\\[1mm] & =  \int_{ \Omega_{1}}  \big (   |  \nabla {v_{m} } |^2 + V (|y|) v_{m}^2 + o_{m}(1)+  O_{m}\Big(e^{-(1-\delta)R  }+\frac{1}{R}\Big)  \int_{ \Omega_{1}}  |   {v_{m} } |^2 \nonumber
\\[1mm]
&\geq  \frac 12 \int_{ \Omega_{1}}   |  \nabla {v_{m} } |^2 + V (|y|) v_{m}^2 + o_{m}(1)
\nonumber
\\[1mm]
&=\frac 12+o_{m}(1),
\end{align*}
which is impossible.  The proof of Lemma \ref{lemmainverse} is completed.
\end{proof}

\medskip
Now we prove the following Proposition  which is crucial for the sequel.

\medskip
\begin{proposition}\label{propos2}
For  any $m$ sufficiently large, there is a $C^1 $ map $\Phi: \mathbb{S}_{m}   \rightarrow  \mathbb{E}_{1} $    such that   $ \Phi(r,t)  = \phi_{r,t} (y) \in    \mathbb{E}_{1}$, and
\begin{align}\label{critiacl}
R'(\phi_{r,t} ) = 0, \quad  \text{in } ~ \mathbb{E}_{1}.
\end{align}
Furthermore, there exists a small $\sigma>0$, such that
\begin{align*}
\|\phi_{r,t} \| \leq  \frac{C}{m^{  \frac{2q-1}{2(1-q)} + \sigma } }.
\end{align*}
\end{proposition}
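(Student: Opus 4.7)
The plan is to recast the critical point condition \eqref{equivalentequat}, namely $\mathbf{l}_m + \mathbf{L}\phi - \mathbf{N}'(\phi) = 0$, as a fixed point problem
$$\phi = \mathcal{A}(\phi) := \mathbf{L}^{-1}\bigl(\mathbf{N}'(\phi) - \mathbf{l}_m\bigr),$$
using the invertibility of $\mathbf{L}$ from Lemma \ref{lemmainverse} (which gives $\|\mathbf{L}^{-1}\|\le 1/\xi$ uniformly in $m$ and $(r,t)\in\mathbb{S}_m$), and then apply the Banach contraction mapping theorem on the ball
$$B_{\rho_m} := \bigl\{\phi\in\mathbb{E}_{1} : \|\phi\|\le \rho_m\bigr\},\qquad \rho_m = \frac{C}{m^{\frac{2q-1}{2(1-q)}+\sigma}}.$$

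First I would establish the linear estimate $\|\mathbf{l}_m\|\le C/m^{\frac{2q-1}{2(1-q)}+\sigma}$. This means controlling two pieces in the definition of $\mathbf{l}$: the potential remainder $\sum_i (V(|y|)-1)(U_{\overline{x}_i}+U_{\underline{x}_i})\phi$, bounded via hypothesis $(\mathrm{H}_a)$ and the fact that at the centers we have $|\overline{x}_i|=|\underline{x}_i|=r\sim(m\ln m)^{1/(1-q)}$, so $V(|y|)-1 = O(r^{-q})$ near each bump; and the coupling remainder $\sum T[U_{\overline{x}_i}^2]U_{\overline{x}_i} + \sum T[U_{\underline{x}_i}^2]U_{\underline{x}_i} - T[W_{r,t}^2]W_{r,t}$, which when expanded produces cross terms involving $T[U_{\overline{x}_i}U_{\overline{x}_j}]$ and $T[U_{\overline{x}_i}U_{\underline{x}_j}]$, decaying in $|\overline{x}_i-\overline{x}_j|\gtrsim r/m$ and $|\overline{x}_i-\underline{x}_i|=2rt$. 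Using the Hardy--Littlewood--Sobolev inequality together with the decay of $U$ and $T[U^2]$ from \eqref{decay}--\eqref{decay1}, and summing over $m$ bumps with the symmetry reduction $\mathbb{R}^3=\cup\Omega_i$, one balances the two contributions at the scale \eqref{H2} to obtain the target exponent (the same energy expansions needed here are promised in Appendix \ref{sb}).

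Next I would verify the quadratic structure of the nonlinear term by showing
$$\|\mathbf{N}'(\phi)\|\le C\|\phi\|^2,\qquad \|\mathbf{N}'(\phi_1)-\mathbf{N}'(\phi_2)\|\le C(\|\phi_1\|+\|\phi_2\|)\|\phi_1-\phi_2\|.$$
Because $\mathbf{N}(\phi)$ in \eqref{N0} is the Hartree quartic evaluated at $W_{r,t}+\phi$ with its constant, linear and quadratic parts in $\phi$ subtracted off, each surviving term is at least cubic in $\phi$. Each such term is controlled by the Hardy--Littlewood--Sobolev inequality followed by Sobolev embedding, exactly as in the bound \eqref{psi-norm}, with $W_{r,t}\in L^\infty$ providing the remaining factors. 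Combined with $\|\mathbf{l}_m\|\le C\rho_m$, this gives $\|\mathcal{A}(\phi)\|\le \xi^{-1}(\|\mathbf{l}_m\|+C\rho_m^2)\le \rho_m$ and a contraction constant $\le \xi^{-1}C\rho_m = o(1)$ on $B_{\rho_m}$, so Banach's theorem yields a unique $\phi_{r,t}\in B_{\rho_m}$. The $C^1$ dependence of $\phi_{r,t}$ on $(r,t)$ then follows from the implicit function theorem applied to the map $F(r,t,\phi):=\mathbf{l}_m+\mathbf{L}\phi-\mathbf{N}'(\phi)$, since $D_\phi F = \mathbf{L} - \mathbf{N}''(\phi_{r,t})$ is invertible on $\mathbb{E}_1$ for $\|\phi_{r,t}\|$ small.

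The main obstacle is the sharp estimate on $\|\mathbf{l}_m\|$. The nonlocal coupling remainder is genuinely more delicate than its analogue in the local-nonlinearity problem of \cite{Duan}: the kernel $1/|x-y|$ creates long-range interactions between every pair of bumps, and one must carefully separate ``self-interaction'' contributions supported near a single $\overline{x}_i$ or $\underline{x}_i$ from the cross terms that are integrated over overlapping neighborhoods. Balancing this interaction against the $V(|y|)-1$ contribution is what forces the scale $r\sim(m\ln m)^{1/(1-q)}$ and the choice of window for $(r,t)$ in \eqref{H2}; a different exponent would either blow up the error or prevent $\mathcal{A}$ from being a self-map of $B_{\rho_m}$. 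Once the correct $\sigma>0$ is extracted from this balance, the remainder of the argument is standard fixed-point theory.
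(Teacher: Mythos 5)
Your proposal follows exactly the paper's argument: express the Euler--Lagrange equation in $\mathbb{E}_1$ as the fixed-point problem $\phi = -\mathbf{L}^{-1}(\mathbf{l}_m - \mathbf{N}'(\phi))$, use the uniform invertibility of $\mathbf{L}$ from Lemma~\ref{lemmainverse}, the bound $\|\mathbf{l}_m\|\le C\,m^{-\frac{2q-1}{2(1-q)}-\sigma}$ from Lemma~\ref{lemmaestimate}, and the quadratic estimates on $\mathbf{N}'$ from Lemma~\ref{N2}, then run Banach's contraction mapping on the ball of radius $Cm^{-\frac{2q-1}{2(1-q)}-\sigma}$. The only addition is your explicit appeal to the implicit function theorem for the $C^1$ dependence on $(r,t)$, which the paper leaves implicit but which is the standard way to obtain it.
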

  From Riesz theorem, there exists a $ 1_{m}\in\mathbb{E}_{1}$ satisfying
\begin{align*}
 {  \bf{ l } }(\phi)=({  \bf{ l } }_{m},\phi),\quad  \forall\phi\in\mathbb{E}_{1}.
\end{align*}
In order to apply the contraction mapping theorem to prove that
\eqref{critiacl} is uniquely solvable, we have to estimate
${  \bf{ l } }_{m}$ and $N(\phi)$ respectively.

\begin{lemma}\label{lemmaestimate}
 For all $(r,t) \in \mathbb{S}_{m} $, there exists a small $\sigma>0$, such that
\begin{align}\label{estimatelk}
\|  {  \bf{ l } }_{m}\| \leq    \frac{C}{m^{  \frac{2q-1}{2(1-q)} + \sigma } }.
\end{align}
\end{lemma}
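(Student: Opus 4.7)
The strategy is duality: since $\ell_m$ is the Riesz representative of the bounded linear functional $\mathbf{l}$, I have $\|\ell_m\| = \sup_{\phi \in \mathbb{E}_1, \|\phi\|=1}|\mathbf{l}(\phi)|$, so it suffices to estimate $|\mathbf{l}(\phi)|$ uniformly for $\phi \in \mathbb{E}_1$ with $\|\phi\|=1$. I split
\[
\mathbf{l}(\phi) = \mathbf{l}^V(\phi) + \mathbf{l}^{NL}(\phi),
\]
where $\mathbf{l}^V(\phi) = \sum_i \int_{\mathbb{R}^3}(V(|y|)-1)(U_{\overline{x}_i}+U_{\underline{x}_i})\phi$ is the potential error and $\mathbf{l}^{NL}(\phi) = \tfrac12\int_{\mathbb{R}^3}\bigl(\sum_i T[U_{\overline{x}_i}^2]U_{\overline{x}_i}+\sum_i T[U_{\underline{x}_i}^2]U_{\underline{x}_i}-T[W_{r,t}^2]W_{r,t}\bigr)\phi$ is the nonlocal interaction error.

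The key symmetry--based trick, as used inside the proof of Lemma \ref{lemmainverse}, is that for any $f$ sharing the polygonal symmetry of $\phi$ one has $\int_{\mathbb{R}^3}f\phi = m\int_{\Omega_1}f\phi$, while $\|\phi\|_{L^2(\Omega_1)}^2 = m^{-1}\|\phi\|_{L^2(\mathbb{R}^3)}^2$. Cauchy--Schwarz applied \emph{inside} the sector then gives the sharp bound $\bigl|\int_{\mathbb{R}^3}f\phi\bigr|\le \sqrt{m}\,\|f\|_{L^2(\mathbb{R}^3)}\|\phi\|$, trading a factor $m$ in the counting for a factor $\sqrt{m}$ in norms.

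For $\mathbf{l}^V$, I reduce via symmetry to a single integral against $(V-1)U_{\overline{x}_1}$ (and the analogous term with $\underline{x}_1$, equivalent by the $y_3\mapsto -y_3$ change of variables). Using hypothesis $(\mathrm H_a)$, Taylor--expanding $|y|^{-q}$ around $y=\overline{x}_1$ on balls $|y-\overline{x}_1|\lesssim\ln m$, and exploiting the exponential decay \eqref{decay} of $U$ outside, one gets $\|(V-1)U_{\overline{x}_1}\|_{L^2(\mathbb{R}^3)} \le Cr^{-q}$. Combined with the $\sqrt m$ trade,
\[
|\mathbf{l}^V(\phi)| \le C\,\sqrt{m}\,r^{-q}\,\|\phi\|.
\]
Inserting $r\gtrsim (m\ln m)^{1/(1-q)}$ from \eqref{H2} shows this is already strictly smaller than $m^{-(2q-1)/(2(1-q))}$ by a polynomial factor $m^{-q/(2(1-q))}$ (times a log), leaving room for a positive $\sigma$.

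For $\mathbf{l}^{NL}$, I write $W_{r,t}^2 = \sum_\alpha U_\alpha^2 + 2\sum_{\alpha\neq\beta}U_\alpha U_\beta$, where $\alpha,\beta$ run over all $2m$ bump centers $\{\overline{x}_i,\underline{x}_i\}$. Then
\[
J := T[W_{r,t}^2]W_{r,t} - \sum_\alpha T[U_\alpha^2]U_\alpha = \sum_{(\alpha,\beta,\gamma)\neq(\alpha,\alpha,\alpha)}T[U_\alpha U_\beta]U_\gamma,
\]
so the diagonal is exactly cancelled and only cross interactions remain. I classify these by how many indices coincide and by whether they involve the "central" index attached to the sector $\Omega_1^+$. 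Using the pointwise and $L^p$ bounds on $T[U_\alpha U_\beta]$ from Lemma \ref{lemma4} in the Appendix (which give either exponential decay or the Newtonian factor $1/|\overline{x}_\alpha-\overline{x}_\beta|$), the dominant contribution on $\Omega_1$ is of the form $U_{\overline{x}_1}(y)\sum_{\alpha\neq\overline{x}_1}1/|\overline{x}_1-\overline{x}_\alpha|$, and a direct computation gives $\sum_{\alpha\neq\overline{x}_1}1/|\overline{x}_1-\overline{x}_\alpha|\le C m\ln m/r$. All other cross terms are bounded by the same or smaller scale, so $\|J\|_{L^2(\Omega_1)}\lesssim m\ln m/r$ and the $\sqrt m$ trade yields
\[
|\mathbf{l}^{NL}(\phi)| \le C\,\frac{m^{3/2}\ln m}{r}\,\|\phi\|.
\]
Using $r\gtrsim (m\ln m)^{1/(1-q)}$ again, this is $\le C/m^{(3q-1)/(2(1-q))}$ up to a log factor, which also beats $m^{-(2q-1)/(2(1-q))}$ by a fixed positive power of $m$.

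The main obstacle will be the systematic enumeration and bookkeeping of the cross--interaction terms in $J$: because of the triple--product structure $T[U_\alpha U_\beta]U_\gamma$, one must treat separately the cases of two coincident indices versus three distinct indices, and within each case the mixings of "top" centers $\overline{x}_i$ with "bottom" centers $\underline{x}_j$, checking at each step that the pointwise estimate on $T$ from Lemma \ref{lemma4} combines with Hardy--Littlewood--Sobolev and the exponential decay of $U$ to produce an effective factor $|\overline{x}_\alpha-\overline{x}_\beta|^{-1}$ (or an exponentially small quantity) whose summation yields the $m\ln m/r$ rate. Once this accounting is completed, adding the potential and nonlocal bounds and choosing $\sigma>0$ smaller than $\min\{q/(2(1-q)),\,(3q-1)/(2(1-q))-(2q-1)/(2(1-q))\} = \min\{q/(2(1-q)),\,1/2\}$ produces \eqref{estimatelk}.
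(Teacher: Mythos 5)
Your proposal follows essentially the same route as the paper's proof of Lemma~\ref{lemmaestimate}: the same split into a potential error and a nonlocal interaction error, the same reduction to a single sector by polygonal symmetry, and the same Newtonian--potential bookkeeping giving the rate $\sum_{i\geq 2}|\overline{x}_1-\overline{x}_i|^{-1}\sim m\ln m/(r\sqrt{1-t^2})$ for the dominant cross terms (the paper's quantity $D_i$). If anything your version is a bit sharper, because you explicitly carry along the sectorwise estimate $\|\phi\|_{L^2(\Omega_1)}^2=m^{-1}\|\phi\|_{L^2}^2$ at every step; that extra factor $m^{-1/2}$ turns the potential contribution into $\sqrt m\,r^{-q}$ rather than $m\,r^{-q}$ and the nonlocal contribution into $m^{3/2}\ln m/r$ rather than $m^2\ln m/r$, which is precisely what keeps the final exponent strictly positive at the borderline case $q=\tfrac12$ (the displayed intermediate bounds in the paper, $(m\ln m)^{-(2q-1)/(1-q)}$ and $\tfrac{m}{4\pi}D_i\lesssim m^2\ln m/r$, do not show that $m^{-1/2}$ gain and would degenerate there). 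One small arithmetic slip in your last line: $(3q-1)/(2(1-q))-(2q-1)/(2(1-q))=q/(2(1-q))$, not $\tfrac12$; they coincide only at $q=\tfrac12$, and since $q/(2(1-q))\geq\tfrac12$ on $[\tfrac12,1)$ your stated condition $\sigma<\tfrac12$ is still sufficient, just not the tight one.
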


\begin{proof}
  We know   that for any $\phi \in \mathbb{E}_{1}$
\begin{align} \label{definlk}
 \langle  {  \bf{ l } }_{m}, \phi  \rangle &=\sum_{i =1}^m   \int_{  \mathbb{R}^3} \big( V(|y|) - 1      \big)  \big( U_{ \overline{x}_{i}  } + U_{\underline{x}_{i}  }\big)  \phi \nonumber
\\[1mm]
&\quad +\frac{1}{2}  \int_{  \mathbb{R}^3} \Big(   \sum_{i =1}^m T[(U_{ \overline{x}_{i} } )^2]  U_{ \overline{x}_{i}  } + \sum_{i =1}^m T[(U_{\underline{x}_i  }   \big)^2]U_{\underline{x}_{i} }-T[W_{r,t}^2 ]  W_{r,t}   \Big) \phi.
\end{align}
With the help of symmetric property of the functions and an elementary calculation, we can get
\begin{equation}\label{t}
 \frac{1}{|x-\overline{x}_{1}|^{t}}=\frac{1}{|\overline{x}_{1}|^{t}}\Big(1+O\Big(\frac{|x|}{|\overline{x}_{1}|}\Big)\Big), \ x\in B_{\frac{|\overline{x}_{1}|}{2}}(0),\quad \forall\ t>0.
\end{equation}
Assuming that $m$ is even, we
claim that
\begin{align*}
|y-\overline{x}_{1}|\geq (4-\tau)\frac{r}{m}\pi \quad \text{for}\quad  5\leq l \leq \frac{m}{2},\quad y\in \Omega_{l}^{+} ,
\end{align*}
for some small $\tau$ and sufficiently large $m$. In fact, we know

\begin{align*}
|y-\overline{x}_{1}|&\geq|\overline{x}_{1}-\overline{x}_{l}|-|y-\overline{x}_{l}|\nonumber
\\[1mm]
&\geq 2r\sqrt{1-t^{2}}\sin\frac{(l-1)\pi}{m}-4\frac{r}{m}\pi\geq (4-\tau)\frac{r}{m}\pi, \quad \text{if}\quad|y-\overline{x}_{l}|\leq 4\frac{r}{m}\pi,
\end{align*}
and
\begin{align*}
|y-\overline{x}_{1}|\geq|y-\overline{x}_{l}|\geq 4\frac{r}{m}\pi,
\quad \text{if}\quad|y-\overline{x}_{l}|\geq 4\frac{r}{m}\pi.
\end{align*}
By the symmetric property of the functions, we can get
\begin{align}\label{first}
& \sum_{i =1}^m   \int_{\mathbb{R}^3} \big( V(|y|) - 1      \big)  \big( U_{ \overline{x}_{i}  } + U_{\underline{x}_{i}  }\big)  \phi   =2  m   \int_{  \mathbb{R}^3} \big( V(|y|) - 1      \big)   U_{ \overline{x}_{1} }  \phi   \nonumber
\\[0.02mm]
&= 2m \Big( \int_{\cup_{l =1}^4 (\Omega_{l}^+ +\Omega_{l}^-)  } +\int_{\cup_{4<l\leq\frac{m}{2}} (\Omega_{l}^+ +\Omega_{l}^-)  }+\int_{\cup_{\frac{m}{2}<l\leq m} (\Omega_{l}^+ +\Omega_{l}^-)  }\Big)\big( V(|y|) - 1      \big)   U_{ \overline{x}_{1} }  \phi\nonumber
\\[0.02mm]
&\leq C m \Big( \int_{\cup_{l =1}^4 (\Omega_{l}^+ +\Omega_{l}^-)  } +\int_{\cup_{4<l\leq\frac{m}{2}} (\Omega_{l}^+ +\Omega_{l}^-)  }\Big)\big( V(|y|) - 1      \big)   U_{ \overline{x}_{1} }  \phi\nonumber
\\[0.02mm]
&\leq C m \Big( \int_{\cup_{l =1}^4 \Omega_{l}^+   } +\int_{\cup_{4<l\leq\frac{m}{2}} \Omega_{l}^+   }\Big)\big( V(|y|) - 1      \big)   U_{ \overline{x}_{1} }  \phi \nonumber
\\[0.02mm]
& \leq C m \Big( \int_{\cup_{l =1}^4 \Omega_{l}^+ \cap B_{\delta_{1} | \overline{x}_{1} | } (\overline{x}_{1})   } +\int_{\cup_{l =1}^4\Omega_{l}^+ \setminus B_{\delta_{1} | \overline{x}_{1} | } (\overline{x}_{1})  }\Big)\big( V(|y|) - 1      \big)   U_{ \overline{x}_{1} }  \phi
\nonumber
\\[0.02mm]
& \quad+C m \int_{\cup_{4<l\leq\frac{m}{2}} \Omega_{l}^+   }\big( V(|y|) - 1      \big)   U_{ \overline{x}_{1} }  \phi ,
\end{align}
where $ \delta_{1}>0$ is some small enough constant. There exists a positive
constant $C$ such that
\begin{align}\label{C1}
\big( V(|y|) - 1      \big)   U_{ \overline{x}_{1} }& \leq\frac{C}{|y-\overline{x}_{1}+\overline{x}_{1}|^{n}}e^{-|y-\overline{x}_{1}|} \nonumber
\\[0.02mm]
& \leq \frac{C}{r^{q}}e^{-|y-\overline{x}_{1}|},\quad \text{for}\quad y\in\cup_{l =1}^4 \Omega_{l}^+ \cap B_{\delta_{1} | \overline{x}_{1} | } (\overline{x}_{1}),
\end{align}
\begin{align}\label{C2}
\big( V(|y|) - 1      \big)   U_{ \overline{x}_{1} }& \leq C e^{-|y-\overline{x}_{1}|}\nonumber
\\[1mm]
& \leq \frac{C}{r^{q}}e^{-\delta_{1}r},\quad \text{for}\quad y\in\cup_{l =1}^4 \Omega_{l}^+ \setminus B_{\delta_{1} | \overline{x}_{1} |}(\overline{x}_{1}),
\end{align}
\begin{align}\label{C3}
\big( V(|y|) - 1      \big)   U_{ \overline{x}_{1} }& \leq C e^{-|y-\overline{x}_{1}|}\nonumber
\\[1mm]
& \leq e^{-(4-\tau)\frac{r}{m}\pi}   \leq \frac{C}{m^{\frac{1+q}{1-q} }},\quad \text{for}\quad y\in\cup_{4<l\leq\frac{m}{2}} \Omega_{l}^+ .
\end{align}
Then by symmetry, H\"{o}lder inequality and combining \eqref{first}-\eqref{C3} together, we can have
\begin{align}\label{sumC}
\sum_{i =1}^m   \int_{\mathbb{R}^3} \big( V(|y|) - 1      \big)  \big( U_{ \overline{x}_{i}  } + U_{\underline{x}_{i}  }\big)  \phi &\leq C\Big(\frac{1}{(m \ln m)^{\frac{2q-1}{1-q} }}+   \frac{1}{m^{\frac{2q}{1-q} }}\Big)\|\phi\|\nonumber
\\[1mm]
& \leq\frac{C}{m^{\frac{2q-1}{2(1-q)}+\sigma }}\|\phi\|.
\end{align}

The last  inequality is true as $\frac{1}{2}\leq n<1$.

We know that
\begin{align*}
 U_{ \overline{x}_{1}  } + U_{\underline{x}_{1} }  \geq   U_{ \overline{x}_{i}  } + U_{\underline{x}_{i}  } \quad \text{for } ~ y \in  \Omega_{1},
\end{align*}
\begin{align*}
  U_{ \overline{x}_{j}  }    \geq   U_{\underline{x}_{j}  } \quad \text{for } ~ y \in  \Omega_{1}^+.
\end{align*}

From the second  term in \eqref{definlk}, we can get
\begin{align}\label{C11}
 &\frac{1}{2}  \int_{  \mathbb{R}^3} \Big(   \sum_{i =1}^m T[(U_{ \overline{x}_{i} } )^2]  U_{ \overline{x}_{i}  } + \sum_{i =1}^m T[(U_{\underline{x}_{i}  }   \big)^2]U_{\underline{x}_{i} }-T[W_{r,t}^2 ]  W_{r,t}   \Big) \phi\nonumber
 \\[0.02mm]
 & = \frac{1}{8\pi}\int_{\mathbb{R}^3}(\int_{\R^3}\sum_{i =1}^m\frac{  U_{ \overline{x}_{i}  }^2  (y)}{|x-y|}U_{ \overline{x}_{i}  }\phi dxdy+\frac{1}{8\pi}\int_{\mathbb{R}^3}(\int_{\R^3}\sum_{i =1}^m\frac{  U_{ \underline{x}_{i}  }^2  (y)}{|x-y|}U_{ \underline{x}_{i} }\phi dxdy\nonumber
 \\[0.02mm]
 &\quad -\frac{1}{8\pi}
 \int_{\mathbb{R}^3}\Big(\int_{\R^3} \frac{ (    \sum_{i =1}^m \big( U_{ \overline{x}_{i}  } + U_{\underline{x}_{i} }\big))^2  (y)}{|x-y|}dy\Big) \sum_{i =1}^m \big( U_{ \overline{x}_{i}  } + U_{\underline{x}_{i} }\big)  \phi
 \nonumber
 \\[0.02mm]
 & = -\frac{m}{8\pi} \int_{\mathbb{R}^3}\int_{\R^3}\frac{  \big( U_{ \overline{x}_{1}  } \big)^2  (y)}{|x-y|}dy  \big( \sum_{i =2}^m U_{ \overline{x}_{i}  } \big)  \phi-\frac{m}{8\pi} \int_{\mathbb{R}^3}\int_{\R^3}\frac{  \big( U_{ \overline{x}_{1}  } \big)^2  (y)}{|x-y|}dy  \big( \sum_{i =1}^m U_{ \underline{x}_{i}  } \big)  \phi\nonumber
 \\[0.02mm]
 &\quad -\frac{m}{8\pi} \int_{\mathbb{R}^3}\int_{\R^3}\frac{  \big( U_{ \underline{x}_{1}  } \big)^2  (y)}{|x-y|}dy  \big( \sum_{i =2}^m U_{ \underline{x}_{i}  } \big)  \phi-\frac{m}{8\pi} \int_{\mathbb{R}^3}\int_{\R^3}\frac{  \big( U_{ \underline{x}_{1}  } \big)^2  (y)}{|x-y|}dy  \big( \sum_{i =1}^m U_{ \overline{x}_{i}  } \big)  \phi\nonumber
 \\[0.02mm]
 &\quad -\frac{1}{8\pi}
 \int_{\mathbb{R}^3}\Big(\int_{\R^3} \frac{    \sum_{i,j =1\atop i \neq j}^m \big( U_{ \overline{x}_{i}  } + U_{\underline{x}_{i} }\big)\big( U_{ \overline{x}_{j}  } + U_{\underline{x}_{j} }\big)  (y)}{|x-y|}dy\Big) \Big(\sum_{s =1}^m \big( U_{ \overline{x}_{s}  } + U_{\underline{x}_{s} }\big) \Big) \phi\nonumber
 \\[0.02mm]
 & = -\frac{m}{4\pi} \int_{\mathbb{R}^3}\int_{\R^3}\frac{  \big( U_{ \overline{x}_{1}  } \big)^2  (y)}{|x-y|}dy  \big( \sum_{i =2}^m U_{ \overline{x}_{i}  } \big)  \phi-\frac{m}{4\pi} \int_{\mathbb{R}^3}\int_{\R^3}\frac{  \big( U_{ \overline{x}_{1}  } \big)^2  (y)}{|x-y|}dy  \big( \sum_{i =1}^m U_{ \underline{x}_{i}  } \big)  \phi\nonumber
 \\[0.02mm]
 &\quad+O\Big( \frac{1}{8\pi}
 \int_{\mathbb{R}^3}\Big(\int_{\R^3} \frac{    \sum_{i,j =1\atop i \neq j}^m \big( U_{ \overline{x}_{i}  } U_{ \overline{x}_{j}  }+U_{ \overline{x}_{i}  }U_{\underline{x}_{j} } + U_{\underline{x}_{i} } U_{ \overline{x}_{j}  }+U_{\underline{x}_{i} }U_{\underline{x}_{j} }\big)  (y)}{|x-y|}dy\Big)  \phi \Big)\nonumber
 \\[0.02mm]
 & = -\frac{m}{4\pi} \int_{\mathbb{R}^3}\int_{\R^3}\frac{  \big( U_{ \overline{x}_{1}  } \big)^2  (y)}{|x-y|}dy  \big( \sum_{i =2}^m U_{ \overline{x}_{i}  } \big)  \phi-\frac{m}{4\pi} \int_{\mathbb{R}^3}\int_{\R^3}\frac{  \big( U_{ \overline{x}_{1}  } \big)^2  (y)}{|x-y|}dy  \big( \sum_{i =1}^m U_{ \underline{x}_{i}  } \big)  \phi\nonumber
 \\[0.02mm]
 &\quad+O\Big( \frac{1}{8\pi}
 \int_{\mathbb{R}^3}\Big(\int_{\R^3} \frac{    \sum_{i,j =1\atop i \neq j}^m \big( U_{ \overline{x}_{i}  }  U_{ \overline{x}_{j}  }+ U_{ \overline{x}_{i}  } U_{\underline{x}_{j} }\big)  (y)}{|x-y|}dy\Big)  \phi\Big)\nonumber
 \\[0.02mm]
 & = -\frac{m}{4\pi} \int_{\mathbb{R}^3}\int_{\R^3}\frac{  \big( U_{ \overline{x}_{1}  } \big)^2  (y)}{|x-y|}dy  \big( \sum_{i =2}^m U_{ \overline{x}_{i}  } \big)  \phi-\frac{m}{4\pi} \int_{\mathbb{R}^3}\int_{\R^3}\frac{  \big( U_{ \overline{x}_{1}  } \big)^2  (y)}{|x-y|}dy  \big( \sum_{i =1}^m U_{ \underline{x}_{i}  } \big)  \phi\nonumber
 \\[0.02mm]
 &\quad+O\Big(\sum_{i\neq j}^m\Big[ e^{-(1-\delta)|\overline{x}_{i}-\overline{x}_{j}|}+e^{-(1-\delta)|\overline{x}_{i}-\underline{x}_{i}|}\Big]\|\phi\|\| e^{-\delta |\overline{x}_{i}-\overline{x}_{j}|}+e^{-\delta |\overline{x}_{i}-\underline{x}_{i}|} \|\Big)\nonumber
 \\[0.02mm]
 &= -\frac{m}{4\pi} \int_{\mathbb{R}^3}\int_{\R^3}\frac{  \big( U_{ \overline{x}_{1}  } \big)^2  (y)}{|x-y|}dy  \big( \sum_{i =2}^m U_{ \overline{x}_{i}  } \big)  \phi-\frac{m}{4\pi} \int_{\mathbb{R}^3}\int_{\R^3}\frac{  \big( U_{ \overline{x}_{1}  } \big)^2  (y)}{|x-y|}dy  \big( \sum_{i =1}^m U_{ \underline{x}_{i}  } \big)  \phi\nonumber
 \\[0.02mm]
 &\quad+O\Big(\sum_{i\neq j}^m\left[ e^{-(1-\delta)|\overline{x}_{i}-\overline{x}_{j}|}+e^{-(1-\delta)|\overline{x}_{i}-\underline{x}_{i}|}\right]\Big)
 \nonumber
 \\[0.02mm]
 & = -\frac{m}{4\pi} \int_{\mathbb{R}^3}\int_{\R^3}\frac{  \big( U_{ \overline{x}_{1}  } \big)^2  (y)}{|x-y|}dy  \big( \sum_{i =2}^m U_{ \overline{x}_{i}  } \big)  \phi-\frac{m}{4\pi} \int_{\mathbb{R}^3}\int_{\R^3}\frac{  \big( U_{ \overline{x}_{1}  } \big)^2  (y)}{|x-y|}dy  \big( \sum_{i =1}^m U_{ \underline{x}_{i}  } \big)  \phi\nonumber
 \\[0.02mm]
 &\quad +O\Big(\sum_{i\neq j}^m\Big[ e^{-(1-\delta)|\overline{x}_{i}-\overline{x}_{j}|}+e^{-(1-\delta)|\overline{x}_{i}-\underline{x}_{i}|}\Big]\Big)
 \nonumber
 \\[0.02mm]
 & =- \frac{m}{4\pi} D_{i} +O\Big( e^{-2\pi(1-\delta)\sqrt{1-t^{2}}\frac{r}{m}}+e^{-2(1-\delta)rt}\Big),
\end{align}

where
\begin{equation*}
  D_{i}= \int_{\mathbb{R}^3}\int_{\R^3}\frac{  \big( U_{ \overline{x}_{1}  }  \big)^2  (y)}{|x-y|}dy \Big( \sum_{i =2}^m U_{ \overline{x}_{i}  }\Big)  \phi+\int_{\mathbb{R}^3}\int_{\R^3}\frac{  \big( U_{ \overline{x}_{1}  }  \big)^2  (y)}{|x-y|}dy \Big( \sum_{i =1}^m U_{ \underline{x}_{i}  } \Big)  \phi.
\end{equation*}
\par
Next, we will estimate $D_{i}$.
We easily know that for any $y\in\Omega_{j}^{+}$,
 \begin{equation}\label{distance-y}
  |y-\overline{x}_{i}|\geq \frac12|\overline{x}_{j}-\overline{x}_{i}|.
 \end{equation}
In fact,  we can easily  verify that
  \begin{equation*}
    |y-\overline{x}_{j}|\leq|y-\overline{x}_{i}|,
  \end{equation*}
which insures  \eqref{distance-y} if $ |y-\overline{x}_{j}|\geq \frac12|\overline{x}_{j}-\overline{x}_{i}|$. On the other hand, we have
\begin{equation*}
   |y-\overline{x}_{i}|\geq |\overline{x}_{j}-\overline{x}_{i}|-|y-\overline{x}_{j}|\geq\frac12|\overline{x}_{j}-\overline{x}_{i}|.
\end{equation*}
From \eqref{decay} and \eqref{distance-y}, we can get that for any $y\in\Omega_{j}^{+},$
\begin{equation}\label{Ui-decay}
  U_{\overline{x}_{i}}\leq
C e^{-(1-\delta)|y-\overline{x}_{i}|} \leq  C e^{-\frac{1-\delta}{2}|\overline{x}_{j}-\overline{x}_{i}|},
\end{equation}
where $\delta$ is any constant small enough.

By \eqref{sum1i}, \eqref{sum6}, \eqref{Ui-decay} and H\"{o}lder inequality,  we can get

\begin{align}\label{Ew2-decay}
     D_{i}&=\int_{\mathbb{R}^3}\int_{\R^3}\frac{  \big( U_{ \overline{x}_{1}  }  \big)^2  (y)}{|x-y|}dy\Big( \sum_{i =2}^m U_{ \overline{x}_{i}  } \Big)  \phi+\int_{\mathbb{R}^3}\int_{\R^3}\frac{  \big( U_{ \overline{x}_{1}  }  \big)^2  (y)}{|x-y|}dy \Big( \sum_{i =1}^m U_{ \underline{x}_{i}  } \Big)  \phi
    \nonumber
     \\[1mm]
    & \leq C \int_{\mathbb{R}^3}\int_{\Omega_{1}^{+}}\frac{  ( U_{ \overline{x}_{1}  } )^2  (y)}{|x-y|}dy \Big( \sum_{i =2}^m U_{ \overline{x}_{i}  }\Big)  \phi +C\sum_{i,j =2}^m\int_{\mathbb{R}^3}\int_{\Omega_{j}^{+}}\frac{ ( U_{ \overline{x}_{1}  } )^2  (y)}{|x-y|}dy  U_{ \overline{x}_{i}  } \phi\nonumber
     \\[1mm]
    &\quad +C\int_{\mathbb{R}^3}\int_{\Omega_{1}^{+}}\frac{ ( U_{ \overline{x}_{1}  } )^2  (y)}{|x-y|}dy \Big( \sum_{i =1}^m U_{ \underline{x}_{i}  } \Big)  \phi  +C\sum_{j =2\atop i=1 }^m\int_{\mathbb{R}^3}\int_{\Omega_{j}^{+}}\frac{ ( U_{ \overline{x}_{1}  } )^2  (y)}{|x-y|}dy  U_{ \underline{x}_{i}}\phi  \nonumber
     \\[1mm]
    &=C\Big[ \int_{\Omega_{1}^{+}}\int_{\Omega_{1}^{+}}\frac{  ( U_{ \overline{x}_{1}  } )^2  (y)}{|x-y|}dy \Big( \sum_{i =2}^m U_{ \overline{x}_{i}  } \Big)  \phi+\sum_{j =2}^m\int_{\Omega_{j}^{+}}\int_{\Omega_{1}^{+}}\frac{  ( U_{ \overline{x}_{1}  } )^2  (y)}{|x-y|}dy \Big( \sum_{i =2}^m U_{ \overline{x}_{i}  } \Big)  \phi \Big]\nonumber
     \\[1mm]
    &\quad+C\sum_{j =1}^m \int_{\Omega_{j}^{-}}\int_{\Omega_1^{+}}\frac{ ( U_{ \overline{x}_{1}  } )^2  (y)}{|x-y|}dy \Big(  \sum_{i =2}^m U_{ \overline{x}_{i}  } \Big)  \phi+ C\sum_{i =2}^m e^{-\frac{1-\delta}{2}|\overline{x}_{i}-\overline{x}_{1}|}+ C\sum_{i =2}^m e^{-(1-\delta)|\overline{x}_{i}-\overline{x}_{1}|} \nonumber
     \\[1mm]
    &\quad+C\Big( \sum_{j =1}^m \int_{\Omega_{j}^{-}}\int_{\Omega_1^{+}}\frac{ ( U_{ \overline{x}_{1}  } )^2  (y)}{|x-y|}dy\Big(  \sum_{i =1}^m U_{ \underline{x}_{i}  } \Big)  \phi +\sum_{j =1}^m \int_{\Omega_{j}^{+}}\int_{\Omega_1^{+}}\frac{ ( U_{ \overline{x}_{1}  } )^2  (y)}{|x-y|}dy \Big(  \sum_{i =1}^m U_{ \underline{x}_{i}  } \Big)  \phi  \Big) \nonumber
     \\[1mm]
    &\leq C\Big( \sum_{i =2}^m e^{-\frac{1-\delta}{2}|\overline{x}_{i}-\overline{x}_{1}|}+\sum_{i,j =1}^m e^{-\frac{1-\delta}{2}|\overline{x}_{i}-\underline{x}_{j}|}+\sum_{i,j =2 \atop i\neq j}^m\int_{\Omega_{j}^{+}}\int_{\Omega_{1}^{+}}\frac{  ( U_{ \overline{x}_{1}  } )^2  (y)}{|x-y|}dy   U_{ \overline{x}_{i}  }  \phi
    \Big) \nonumber
     \\[1mm]
    &\quad+C\sum_{i =2 }^m\int_{\Omega_{i}^{+}}\int_{\Omega_{1}^{+}}\frac{  ( U_{ \overline{x}_{1}  } )^2  (y)}{|x-y|}dy   U_{ \overline{x}_{i}  }   \phi\nonumber
     \\[1mm]
    &\quad+C\Big(  \sum_{i =1 }^m\int_{\Omega_{i}^{-}}\int_{\Omega_{1}^{+}}\frac{  ( U_{ \overline{x}_{1}  } )^2  (y)}{|x-y|}dy  U_{ \underline{x}_{i}  }   \phi +\sum_{i,j =1 \atop i\neq j}^m\int_{\Omega_{j}^{-}}\int_{\Omega_{1}^{+}}\frac{  ( U_{ \overline{x}_{1}  } )^2  (y)}{|x-y|}dy   U_{ \underline{x}_{i}  }   \phi \Big)\nonumber
     \\[1mm]
    &\leq C\Big( \sum_{i =2 }^m\int_{\Omega_{i}^{+}}\int_{\Omega_{1}^{+}}\frac{  ( U_{ \overline{x}_{1}  } )^2  (y)}{|x-y|}dy  U_{ \overline{x}_{i}  }  \phi+ \sum_{i =1 }^m\int_{\Omega_{i}^{-}}\int_{\Omega_{1}^{+}}\frac{  ( U_{ \overline{x}_{1}  } )^2  (y)}{|x-y|}dy   U_{ \underline{x}_{i}  } \phi \Big)\nonumber
     \\[1mm]
    &\quad+C\Big( \sum_{i,j =1\atop i\neq j}^m e^{-\frac{1-\delta}{2}|\overline{x}_{i}-\overline{x}_{j}|}+\sum_{i,j =1}^m e^{-\frac{1-\delta}{2}|\overline{x}_{i}-\underline{x}_{j}|}   \Big)\nonumber
     \\[1mm]
    &\leq C\Big( \sum_{i =2 }^m\int_{\Omega_{i}^{+}}\int_{\Omega_{1}^{+}}\frac{  ( U_{ \overline{x}_{1}  } )^2  (y)}{|x-y|}dy   U_{ \overline{x}_{i}  }  \phi+ \sum_{i =1 }^m\int_{\Omega_{i}^{-}}\int_{\Omega_{1}^{+}}\frac{  ( U_{ \overline{x}_{1}  } )^2  (y)}{|x-y|}dy  U_{ \underline{x}_{i}  }   \phi \Big)\nonumber
     \\[1mm]
    &\quad+C\Big( \sum_{i,j =1\atop i\neq j}^m e^{-\frac{1-\delta}{2}2r\sqrt{1-t^{2}}\sin\frac{|j-i|\pi}{m}}+\sum_{i,j =1}^m e^{-\frac{1-\delta}{2}2r\sqrt{t^{2}+(1-t^{2})\sin^{2}\frac{|j-i|\pi}{m}}}   \Big)\nonumber
     \\[1mm]
    &\leq C\Big( \sum_{i =2 }^m\int_{\Omega_{i}^{+}}\int_{\Omega_{1}^{+}}\frac{  ( U_{ \overline{x}_{1}  } )^2  (y)}{|x-y|}dy  U_{ \overline{x}_{i}  }   \phi+ \sum_{i =1 }^m\int_{\Omega_{i}^{-}}\int_{\Omega_{1}^{+}}\frac{  ( U_{ \overline{x}_{1}  } )^2  (y)}{|x-y|}dy   U_{ \underline{x}_{i}  }   \phi \Big)\nonumber
     \\[1mm]
    &\quad+C m\Big( \sum_{i=1}^{m-1} e^{-\frac{1-\delta}{2}r\sqrt{1-t^{2}}\frac{i\pi}{m}}+ e^{-(1-\delta)r t}   \Big)\nonumber
     \\[1mm]
    &\leq C\Big( \sum_{i =2 }^m\int_{\Omega_{i}^{+}}\int_{\Omega_{1}^{+}}\frac{  ( U_{ \overline{x}_{1}  } )^2  (y)}{|x-y|}dy  U_{ \overline{x}_{i}  }   \phi+ \sum_{i =1 }^m\int_{\Omega_{i}^{-}}\int_{\Omega_{1}^{+}}\frac{  ( U_{ \overline{x}_{1}  } )^2  (y)}{|x-y|}dy   U_{ \underline{x}_{i}  }   \phi \Big)\nonumber
     \\[1mm]
    &\quad+C m\Big(  e^{-\frac{1-\delta}{2}r\sqrt{1-t^{2}}\frac{\pi}{m}}+ e^{-(1-\delta)r t}   \Big)\nonumber
     \\[1mm]
    &= C\Big( \sum_{i =2 }^m\int_{\Omega_{i}^{+}}\int_{\Omega_{1}^{+}}\frac{  ( U_{ \overline{x}_{1}  } )^2  (y)}{|x-y|}dy  U_{ \overline{x}_{i}  }  \phi+ \sum_{i =1 }^m\int_{\Omega_{i}^{-}}\int_{\Omega_{1}^{+}}\frac{  ( U_{ \overline{x}_{1}  } )^2  (y)}{|x-y|}dy   U_{ \underline{x}_{i}  }  \phi \Big)\nonumber
     \\[1mm]
    &\quad+C\Big(  m e^{-\frac{1-\delta}{4}r\sqrt{1-t^{2}}\frac{\pi}{m}}e^{-\frac{1-\delta}{4}r\sqrt{1-t^{2}}\frac{\pi}{m}}+ m e^{-\frac{1-\delta}{2}r t} e^{-\frac{1-\delta}{2}r t}  \Big)\nonumber
     \\[1mm]
    &=C\Big[ \sum_{i =2}^m\int_{\Omega_{i}^{+}}\int_{\Omega_{1}^{+}}\frac{  ( U_{ \overline{x}_{1}  } )^2  (y)}{|x-y|}dy \Big(  U_{ \overline{x}_{i}  } \Big)  \phi+\sum_{i =1}^m \int_{\Omega_{i}^{-}}\int_{\Omega_1^{+}}\frac{ ( U_{ \overline{x}_{1}  } )^2  (y)}{|x-y|}dy \Big(  U_{ \underline{x}_{i}  } \Big)  \phi  \Big]\nonumber
     \\[1mm]
    &\quad+O\Big( e^{-\frac{1-\delta}{4}r\sqrt{1-t^{2}}\frac{\pi}{m}}+e^{-\frac{1-\delta}{2}rt}\Big)  \nonumber
      \\[1mm]
      &\leq C \sum_{i=2}^m\int_{\Omega_{i}^{+}}
     \int_{B_{\frac{| \overline{x}_{1} - \overline{x}_{i} |}{8}}( \overline{x}_{1} )\cap \Omega_{1}^{+}} e^{-(1-\delta)(2|y-\overline{x}_{1}|+ |x-\overline{x}_{i}|)}\frac{ \phi}{|x-y|}dydx \nonumber
     \\[1mm]
     &\quad
     + O\Big( e^{-\frac{1-\delta}{4}r\sqrt{1-t^{2}}\frac{\pi}{m}}+e^{-\frac{1-\delta}{2}rt}\Big)   \nonumber
     \\[1mm]
   &\quad+ C\sum_{i=1}^m\int_{\Omega_{i}^{-}}
     \int_{B_{\frac{| \overline{x}_{1} - \underline{x}_{i} |}{8}}( \overline{x}_{1} )\cap \Omega_{1}^{+}} e^{-(1-\delta)(2|y-\overline{x}_{1}|+ |x-\underline{x}_{i}|)}\frac{ \phi}{|x-y|}dydx\nonumber
      \\[1mm]
  &\leq C\sum_{i=2}^m\int_{B_{\frac{| \overline{x}_{1} - \overline{x}_{i} |}{8}}( \overline{x}_{1} )\cap\Omega_{i}^{+}}
     \int_{B_{\frac{| \overline{x}_{1} - \overline{x}_{i} |}{8}}( \overline{x}_{1} )\cap \Omega_{1}^{+}} e^{-(1-\delta)(2|y-\overline{x}_{1}|+ |x-\overline{x}_{i}|)}\frac{ \phi}{|x-y|}dydx \nonumber
     \\[1mm]
   &\quad+ C\sum_{i=1}^m\int_{B_{\frac{| \overline{x}_{1} - \underline{x}_{i} |}{8}}( \overline{x}_{1} )\cap\Omega_{i}^{-}}
     \int_{B_{\frac{| \overline{x}_{1} - \underline{x}_{i} |}{8}}( \overline{x}_{1} )\cap \Omega_{1}^{+}} e^{-(1-\delta)(2|y-\overline{x}_{1}|+ |x-\underline{x}_{i}|)}\frac{ \phi}{|x-y|}dydx\nonumber
     \\[1mm]
   &\quad+ O\Big( e^{-\frac{1-\delta}{4}r\sqrt{1-t^{2}}\frac{\pi}{m}}+e^{-\frac{1-\delta}{2}rt}\Big) \nonumber
     \\[1mm]
  &\leq C\sum_{i=2}^m\frac1{|\overline{x}_{1} - \overline{x}_{i} |}\int_{B_{\frac{| \overline{x}_{1} - \overline{x}_{i} |}{8}\cap\Omega_{i}^{+}}( \overline{x}_{1} )}
     \int_{B_{\frac{| \overline{x}_{1} - \overline{x}_{i} |}{8}\cap\Omega_{1}^{+}}( \overline{x}_{1} )} e^{-(1-\delta)(2|y-\overline{x}_{1}|+ |x-\overline{x}_{i}|)}\phi
     dydx\nonumber
     \\[1mm]
     &\quad+C\sum_{i=1}^m\frac1{|\overline{x}_{1} - \underline{x}_{i} |}\int_{B_{\frac{| \overline{x}_{1} - \underline{x}_{i} |}{8}\cap\Omega_{i}^{-}}( \overline{x}_{1} )}
     \int_{B_{\frac{| \overline{x}_{1} - \underline{x}_{i} |}{8}\cap\Omega_{1}^{+}}( \overline{x}_{1} )} e^{-(1-\delta)(2|y-\overline{x}_{1}|+ |x-\underline{x}_{i}|)}\phi
     dydx\nonumber
     \\[1mm]
  &\quad+O\Big( e^{-\frac{1-\delta}{4}r\sqrt{1-t^{2}}\frac{\pi}{m}}+e^{-\frac{1-\delta}{2}rt}\Big) \nonumber
   \\[1mm]
   &\leq C\sum_{i=2}^m\frac1{|\overline{x}_{1} - \overline{x}_{i} |}\|\phi\|+C\frac1{|\overline{x}_{1} - \underline{x}_{1} |}\|\phi\|
    +O\Big( e^{-\frac{1-\delta}{4}r\sqrt{1-t^{2}}\frac{\pi}{m}}+e^{-\frac{1-\delta}{2}rt}\Big) \nonumber
   \\[1mm]
   &\leq C\sum_{i=1}^{m-1}\frac{1}{r\sqrt{1-t^{2}}}\frac{1}{\sin\frac{i\pi}{m}}\|\phi\|+C\frac{1}{rt}\|\phi\|
  +O\Big( e^{-\frac{1-\delta}{4}r\sqrt{1-t^{2}}\frac{\pi}{m}}+e^{-\frac{1-\delta}{2}rt}\Big)  \nonumber
   \\[1mm]
   &\leq C\frac{m\ln m}{r\sqrt{1-t^{2}}}\|\phi\|+O\Big( e^{-\frac{1-\delta}{4}r\sqrt{1-t^{2}}\frac{\pi}{m}}+e^{-\frac{1-\delta}{2}rt}\Big) \nonumber
    \\[1mm]
 &\leq \frac{C}{(m\ln m)^{  \frac{q}{(1-q)}  } }\|\phi\|.
\end{align}

With $\phi(y)=o_r(1)$ when $|y|\geq\frac r2$, from \eqref{definlk}, \eqref{sumC}, \eqref{C11} and \eqref{Ew2-decay}, we can obtain that
\begin{equation*}
  | {  \bf{ l }}(\phi)|\leq    \frac{C}{m^{  \frac{2q-1}{2(1-q)} + \sigma } }\|\phi\|,
\end{equation*}
because of $\frac12\leq q<1$. The proof is completed.
\end{proof}

\begin{lemma}\label{N2}
 There exists a positive constant $C$ satisfying
$$
\|N'(\phi)\|\leq C\|\phi\|^2 \mbox{ and }\|N''(\phi)\|\leq C\|\phi\|.
$$
\end{lemma}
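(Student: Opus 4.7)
My plan is first to simplify $\mathbf{N}(\phi)$. Expanding $(W_{r,t}+\phi)^2(y)(W_{r,t}+\phi)^2(x)-W_{r,t}^2(y)W_{r,t}^2(x)$ in powers of $\phi$ and exploiting the $x\leftrightarrow y$ symmetry of the kernel $1/|x-y|$, one checks that the three explicitly subtracted correction terms in the definition \eqref{N0} cancel exactly the parts of the expansion that are linear and quadratic in $\phi$. Thus $\mathbf{N}(\phi)$ is identified with the third-order Taylor remainder of the nonlocal quartic term, and reduces to
\[
\mathbf{N}(\phi)=\frac{1}{8\pi}\int_{\R^3}\!\!\int_{\R^3}\frac{W_{r,t}(y)\phi(y)\phi^{2}(x)}{|x-y|}\,dxdy+\frac{1}{32\pi}\int_{\R^3}\!\!\int_{\R^3}\frac{\phi^{2}(y)\phi^{2}(x)}{|x-y|}\,dxdy,
\]
a cubic plus a quartic form in $\phi$.

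Next I would differentiate this explicit expression. Then $\mathbf{N}'(\phi)[\psi]$ is the sum of integrals that are quadratic in $\phi$ and linear in $\psi$ (coming from the cubic part of $\mathbf{N}$) together with an integral cubic in $\phi$ and linear in $\psi$ (coming from the quartic part). Similarly, $\mathbf{N}''(\phi)[\psi_{1},\psi_{2}]$ is a sum of integrals linear and quadratic in $\phi$, linear in each $\psi_{i}$. All of these integrals have the schematic form $\int\!\!\int|x-y|^{-1}f(y)g(x)\,dxdy$, with $f$ and $g$ products of factors from $\{W_{r,t},\phi,\psi\}$.

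To bound each such integral I will use the Hardy--Littlewood--Sobolev inequality
\[
\Big|\int_{\R^3}\!\int_{\R^3}\frac{f(y)g(x)}{|x-y|}\,dxdy\Big|\leq C\,\|f\|_{L^{6/5}(\R^3)}\,\|g\|_{L^{6/5}(\R^3)},
\]
combined with H\"older's inequality $\|uv\|_{L^{6/5}}\leq\|u\|_{L^{12/5}}\|v\|_{L^{12/5}}$ and the Sobolev embedding $H^{1}(\R^3)\hookrightarrow L^{12/5}(\R^3)$. Since $V$ is bounded with $V\geq V_{1}>0$, the norm $\|\cdot\|$ is equivalent to the standard $H^{1}$ norm and therefore controls $\|\cdot\|_{L^{12/5}}$. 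Every $W_{r,t}$ factor is paired with a $\phi$ or $\psi$ inside one $L^{6/5}$ norm so that H\"older delivers $L^{12/5}$ norms of the individual factors; this is precisely the chain producing estimate \eqref{psi-norm}. Carrying this out term by term yields
\[
|\mathbf{N}'(\phi)[\psi]|\leq C\bigl(\|\phi\|^{2}+\|\phi\|^{3}\bigr)\|\psi\|,\qquad |\mathbf{N}''(\phi)[\psi_{1},\psi_{2}]|\leq C\bigl(\|\phi\|+\|\phi\|^{2}\bigr)\|\psi_{1}\|\,\|\psi_{2}\|.
\]
Since the lemma is applied in the regime $\|\phi\|$ small (cf.\ Proposition \ref{propos2}), the higher-order terms are absorbed and the claimed bounds $\|\mathbf{N}'(\phi)\|\leq C\|\phi\|^{2}$ and $\|\mathbf{N}''(\phi)\|\leq C\|\phi\|$ follow.

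The main technical point is the exponent bookkeeping. Because $H^{1}(\R^3)$ does not embed into $L^{p}$ for $p<2$, no isolated $\phi$ or $\psi$ factor can be placed alone in $L^{6/5}$; each such factor must be grouped with another factor from $\{W_{r,t},\phi,\psi\}$ via H\"older so that the only norms appearing on single factors are $L^{12/5}$ norms, which are controlled by $\|\cdot\|$. Once this pairing is arranged consistently throughout all terms produced by the two differentiations, the estimates fall out mechanically.
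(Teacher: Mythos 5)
Your proposal is correct and follows essentially the same route as the paper: identify $\mathbf{N}'(\phi)$ and $\mathbf{N}''(\phi)$ explicitly as sums of nonlocal multilinear forms with $W_{r,t}$, $\phi$, and test-function factors, then estimate each term by Hardy--Littlewood--Sobolev, H\"older, and Sobolev embedding exactly as in \eqref{psi-norm}, concluding $\|\mathbf{N}'(\phi)\|\le C(\|\phi\|^2+\|\phi\|^3)$ and $\|\mathbf{N}''(\phi)\|\le C(\|\phi\|+\|\phi\|^2)$. The only presentational difference is that you first simplify $\mathbf{N}(\phi)$ to its cubic-plus-quartic form and then differentiate, whereas the paper differentiates \eqref{N0} directly and lets the linear/quadratic cancellations appear after differentiation; the resulting formulas for $\mathbf{N}'$ and $\mathbf{N}''$ and the final bounds coincide.
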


\begin{proof}
From \eqref{N0} and
by direct calculation, there exists $v\in H_{0}^{1}$ satisfying
\begin{align}\label{N'}
&{ \bf{ N } }'(\phi)\nonumber
   \\[1mm] &=  \frac{1}{16\pi}\int_{\R^3}\int_{\R^3}\frac{(                       W_{r,t}+\phi)(y)v(y)(          W_{r,t}+\phi)^2(x)}{|x-y|}+\frac{1}{16\pi}\int_{\R^3}\int_{\R^3}\frac{(          W_{r,t}+\phi)^2(y)(                       W_{r,t}+\phi)(x)v(x)}{|x-y|}\nonumber
   \\[1mm]&
\quad-\frac{1}{8\pi}\int_{\R^3}\int_{\R^3}\frac{( W_{r,t})^2 (y) W_{r,t}(x)v(x)}{|x-y|}-\frac{1}{8\pi}\int_{\R^3}\Big(\int_{\R^3}\frac{( W_{r,t})^2 (y)}{|x-y|}dy\Big)\phi(x) v(x)
\nonumber
   \\[1mm]&
\quad-\frac{1}{8\pi}\int_{\R^3}\Big(\int_{\R^3}\frac{ W_{r,t}(y)v(y)}{|x-y|}dy\Big)W_{r,t}(x)\phi(x)
-\frac{1}{8\pi}\int_{\R^3}\Big(\int_{\R^3}\frac{ W_{r,t}(y)\phi(y)}{|x-y|}dy\Big)W_{r,t}(x)v(x)\nonumber
   \\[1mm]
   &=\frac{1}{8\pi}\int_{\R^3}\int_{\R^3}\frac{                       W_{r,t}(y)v(y)\phi^2(x)}{|x-y|}+\frac{1}{8\pi}\int_{\R^3}\int_{\R^3}\frac{                       \phi(y)v(y)\phi^2(x)}{|x-y|}+\frac{1}{4\pi}\int_{\R^3}\int_{\R^3}\frac{                       \phi(y)v(y)W_{r,t}(x)\phi(x)}{|x-y|},
\end{align}
and
\begin{align}\label{N''}
{ \bf{ N } }''(\phi) &=  \frac{1}{8\pi}\int_{\R^3}\int_{\R^3}\frac{v^2(y)(          W_{r,t}+\phi)^2(x)}{|x-y|}+\frac{1}{4\pi}\int_{\R^3}\int_{\R^3}\frac{(          W_{r,t}+\phi)(y)v(y)(                       W_{r,t}+\phi)(x)v(x)}{|x-y|}\nonumber
   \\[0.02mm]&
\quad-\frac{1}{8\pi}\int_{\R^3}\int_{\R^3}\frac{( W_{r,t})^2 (y) v^2(x)}{|x-y|}
-\frac{1}{4\pi}\int_{\R^3}\Big(\int_{\R^3}\frac{ W_{r,t}(y)v(y)}{|x-y|}dy\Big)W_{r,t}(x)v(x)\nonumber
   \\[0.02mm]
   &=\frac{1}{8\pi}\int_{\R^3}\int_{\R^3}\frac{                      v^2(y)\Big(2W_{r,t}(x)\phi(x)+\phi^2(x)\Big)}{|x-y|}+\frac{1}{4\pi}\int_{\R^3}\int_{\R^3}\frac{ W_{r,t}(y)v(y)                      \phi(x)v(x)}{|x-y|}\nonumber
   \\[0.02mm]&
\quad+\frac{1}{4\pi}\int_{\R^3}\int_{\R^3}\frac{\phi(y)v(y)W_{r,t}(x)v(x)}{|x-y|}+\frac{1}{4\pi}\int_{\R^3}\int_{\R^3}\frac{                       \phi(y)v(y)\phi(x)v(x)}{|x-y|}.
\end{align}
Then, by  Hardy-Littlewood-Sobolev inequality, H\"{o}lder inequality and Sobolev inequality, we can have
$$
\|N'(\phi)\|\leq C(\|\phi\|^2+\|\phi\|^3)\leq C\|\phi\|^2,\quad
\|N''(\phi)\|\leq C(\|\phi\|^2+\|\phi\|)
\leq C\|\phi\|.
$$
\end{proof}
Now we are in a position to prove Proposition \ref{propos2}.\\
{\textit{Proof of Proposition \ref{propos2}     }}:   From Riesz theorem, there exists a $ 1_{m}\in\mathbb{E}_{1}$ satisfying
\begin{align*}
 {  \bf{ l } }(\phi)=({  \bf{ l } }_{m},\phi),\quad  \forall\phi\in\mathbb{E}_{1},\quad \text{and}\quad\|{  \bf{ l } }_{m}\|=\|{  \bf{ l } }\|.
\end{align*}
So, $ \phi $ which is a critical point of R in $ \mathbb{E}_{1}$ is a solution to \eqref{equivalentequat}.
 From Lemma \ref{lemmainverse}, we know that ${\bf{L}}$ is invertible.
Moreover, we can write \eqref{equivalentequat} as follows:
\begin{equation*}
    \phi= M(\phi):=-{\bf{L}}^{-1}\left({  \bf{ l } }_m -{\bf{ N } }'(\phi)\right).
\end{equation*}
 Define \begin{equation*}
  B:=\Big\{\phi\in \mathbb{E}_{1} : \|\phi\|\leq  C\big(\frac{1}{m}\big)^{\frac{2q-1}{2(1-q)} + \sigma} \Big\}.
\end{equation*}
We can claim that $M$ is a contraction map from $B$ to $B$.   In fact, from Lemma \ref{N2}, we can obtain
\begin{equation*}
\|{\bf{ N } }'(\phi)\|\leq C\|\phi\|^2 \mbox{ and }\|{\bf{ N } }''(\phi)\|\leq C\|\phi\|.
\end{equation*}
From Lemmas  \ref{lemmainverse}-- \ref{lemmaestimate}, we can get that
 \begin{equation*}
 \begin{aligned}
     \|M(\phi)\|&\le C\|{  \bf{ l } }_{m}\|+C\|\phi\|^2  \le C\Big(\frac{1}{m}\Big)^{\frac{2q-1}{2(1-q)} + \sigma}.
 \end{aligned}
 \end{equation*}
Moreover, we have
 \begin{equation*}
 \begin{aligned}
     \|M(\phi_{1})-M(\phi_{2})\|&\le  C\|{\bf{ N } }'(\phi_{1})-{\bf{ N } }'(\phi_{2})\|
       \le C\Big(\frac{1}{m}\Big)^{\frac{2q-1}{2(1-q)} + \sigma}
      \|\phi_{1}-\phi_{2}\|\\
      &\le \frac12 \|\phi_{1}-\phi_{2}\|,
 \end{aligned}
 \end{equation*}
 for $m$ large enough. We complete the proof.

\section{Proof of  Theorem \ref{main1}}\label{s3}
In this section, we mainly prove Theorem \ref{main1}.
In order to prove Theorem \ref{main1}, the following proposition is necessary.
\medskip
\begin{proposition} \label{pro2.5}
Assume that $ \Phi(r,t) = \phi_{r,t}(y)$ with $\Phi(r,t) $ be the map which is defined in   Proposition \ref{propos2}.  Define
\begin{align*}
\overline{F}(r,t) =   I\big(W_{r,t}  +     \phi_{r,t}(y)\big),  \qquad  \forall  ~ (r,t)\in  \mathbb{S}_{m}.
 \end{align*}
If  $(r,t)$ is a critical point of $ \overline{F} (r,t) $, then
 \begin{align*}
  u = W_{r,t}  +     \phi_{r,t}(y),
  \end{align*}
   is  a critical point of
 $ I(u)$ in $ H^1(\mathbb{R}^3)$. \qed
\end{proposition}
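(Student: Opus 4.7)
The plan is the standard Lagrange multiplier step of Lyapunov--Schmidt reduction. Since by Proposition \ref{propos2} the function $\phi_{r,t}$ is a critical point of $R$ constrained to $\mathbb{E}_{1}$, the functional $I'(W_{r,t}+\phi_{r,t})$ must annihilate every direction in $\mathbb{E}_{1}$, hence lies in its orthogonal complement inside $H_{a}$. By the explicit form of the constraints defining $\mathbb{E}_{1}$ in \eqref{SpaceE} and by Riesz representation, there exist scalars $\overline{A}_{\ell i},\underline{A}_{\ell i}$ ($\ell=1,2$, $i=1,\dots,m$) such that
\begin{align*}
I'(W_{r,t}+\phi_{r,t}) \;=\; \sum_{i=1}^{m}\sum_{\ell=1}^{2}\bigl(\overline{A}_{\ell i}\,\overline{\eta}_{\ell i}+\underline{A}_{\ell i}\,\underline{\eta}_{\ell i}\bigr),
\end{align*}
where $\overline{\eta}_{\ell i},\underline{\eta}_{\ell i}$ are the Riesz representers of the two functionals appearing in \eqref{SpaceE}. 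Using that $W_{r,t}$ and $\phi_{r,t}$ share the $\mathbb{Z}_{m}$ polygonal symmetry in the $(y_{1},y_{2})$-plane, the evenness in $y_{2}$, and the evenness in $y_{3}$ inherited from the balanced construction \eqref{overunder}, the reflection $y_{3}\mapsto -y_{3}$ exchanges $\overline{\eta}_{\ell i}\leftrightarrow\underline{\eta}_{\ell i}$ and the $\mathbb{Z}_{m}$ rotations permute the index $i$. The multipliers therefore collapse to two effective scalars $A_{1},A_{2}$ with $\overline{A}_{\ell i}=\underline{A}_{\ell i}=A_{\ell}$.

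Next, the assumption that $(r,t)$ is a critical point of $\overline{F}$ yields the two scalar equations
\begin{align*}
0 \;=\; \partial_{s}\overline{F}(r,t) \;=\; \bigl\langle I'(W_{r,t}+\phi_{r,t}),\,\partial_{s}W_{r,t}+\partial_{s}\phi_{r,t}\bigr\rangle,\qquad s\in\{r,t\}.
\end{align*}
I would show that the $\partial_{s}\phi_{r,t}$ contribution is of lower order: differentiating the orthogonality $\langle\overline{\eta}_{\ell i},\phi_{r,t}\rangle=0$ in $s$ gives $\langle\overline{\eta}_{\ell i},\partial_{s}\phi_{r,t}\rangle=-\langle\partial_{s}\overline{\eta}_{\ell i},\phi_{r,t}\rangle$, which is controlled by the smallness of $\|\phi_{r,t}\|$ from Proposition \ref{propos2}. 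Substituting $\partial_{r}W_{r,t}=\sum_{i}(\overline{\mathbb{Z}}_{1i}+\underline{\mathbb{Z}}_{1i})$ and $\partial_{t}W_{r,t}=\sum_{i}(\overline{\mathbb{Z}}_{2i}+\underline{\mathbb{Z}}_{2i})$ then produces a $2\times 2$ linear system in $(A_{1},A_{2})$ with coefficient matrix
\begin{align*}
M_{s\ell} \;=\; \sum_{i=1}^{m}\bigl\langle\overline{\eta}_{\ell i}+\underline{\eta}_{\ell i},\,\overline{\mathbb{Z}}_{si}+\underline{\mathbb{Z}}_{si}\bigr\rangle,\qquad s,\ell\in\{1,2\}.
\end{align*}

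The main obstacle is to check invertibility of $M$ uniformly for $(r,t)\in\mathbb{S}_{m}$. The diagonal entries will be dominated by the single-bump quantities $\int T[U^{2}]|\partial_{y_{j}}U|^{2}+2\,T[U\,\partial_{y_{j}}U]\,U\,\partial_{y_{j}}U$, which are strictly positive thanks to the nondegeneracy of $U$ underlying \eqref{U-equation}. The off-diagonal entry $M_{12}=M_{21}$ reduces, via the explicit identities expressing $\overline{\mathbb{Z}}_{\ell i}$ as linear combinations of $\partial_{y_{1}}U_{\overline{x}_{i}}$ and $\partial_{y_{3}}U_{\overline{x}_{i}}$, to terms whose leading part vanishes by the radial orthogonality $\int\partial_{y_{1}}U\,\partial_{y_{3}}U=0$. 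Cross-bump contributions to $M_{s\ell}$ are exponentially small, controlled by $e^{-c|\overline{x}_{i}-\overline{x}_{j}|}$ and $e^{-c|\overline{x}_{i}-\underline{x}_{i}|}$ through Lemma \ref{lemma1} together with the separation estimates implied by $(r,t)\in\mathbb{S}_{m}$, and are therefore absorbed into the remainder. Invertibility of $M$ forces $A_{1}=A_{2}=0$, so $I'(W_{r,t}+\phi_{r,t})=0$ in $H^{1}(\mathbb{R}^{3})$, and $u=W_{r,t}+\phi_{r,t}$ is a critical point of $I$, proving the proposition.
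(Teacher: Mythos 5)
Your proof takes the same Lagrange-multiplier route as the paper, but is considerably more careful at two points where the paper's exposition is rather loose, so the comparison is worth spelling out. The paper expresses $I'(W_{r,t}+\phi_{r,t})$ as a linear combination of the bare functions $\partial U_{\overline{x}_i}/\partial r$, $\partial U_{\underline{x}_j}/\partial r$, $\partial U_{\overline{x}_p}/\partial t$, $\partial U_{\underline{x}_q}/\partial t$, whereas the constraints defining $\mathbb{E}_1$ in \eqref{SpaceE} involve the nonlocal quantities $T[U_{\overline{x}_i}^2]\overline{\mathbb{Z}}_{\ell i}+2T[U_{\overline{x}_i}\overline{\mathbb{Z}}_{\ell i}]U_{\overline{x}_i}$, so the correct multiplier directions are the Riesz representers of those functionals, exactly as you use. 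More importantly, the paper deduces $C_i=C_j=C_p=C_q=0$ from the single scalar identity $\overline{F}_r+\overline{F}_t=0$, which cannot by itself determine all the multipliers; your proof correctly invokes the $\mathbb{Z}_m$-rotation and $y_3$-reflection symmetry to collapse the $4m$ multipliers to two effective scalars, and then uses \emph{both} equations $\overline{F}_r=0$ and $\overline{F}_t=0$ together with invertibility of the resulting $2\times 2$ Gram-type matrix $M$ to conclude $A_1=A_2=0$. One point you should add a line on is why $\phi_{r,t}$ is in fact even in $y_3$: the space $H$ only imposes evenness in $y_2$, so the $y_3$-evenness of $\phi_{r,t}$ must be extracted from the uniqueness of the contraction fixed point in Proposition \ref{propos2} together with the observation that the whole setup (the map $M$, the ball $B$, and the set $\mathbb{E}_1$) is invariant under $y_3\mapsto -y_3$; without this, the identification $\overline{A}_{\ell i}=\underline{A}_{\ell i}$ is not automatic. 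With that noted, your argument is the rigorous version of what the paper sketches, and both reach the same conclusion.
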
 
\begin{proof}
From \eqref{functional}, we can know that
\begin{align*}
 \overline{F}(r,t)&=\frac{1}{2}\int_{\R^3}|\nabla (W_{r,t}+\phi_{r,t})|^2+V(|x|)(W_{r,t}+\phi_{r,t})^2\\
 &\quad-\frac{1}{32\pi}\int_{\R^3}\int_{\R^3}\frac{(W_{r,t}+\phi_{r,t})^2(y)(W_{r,t}+\phi_{r,t})^2(x)}{|x-y|}.
\end{align*}
The direct calculation leads to
\begin{align}\label{One}
 \overline{F}_{r}=&
 \int_{\R^3}-\triangle\left(W_{r,t}+\phi_{r,t}\right)( \frac{\partial W_{r,t}}{\partial r}+ \frac{\partial  \phi}{\partial r})+V(|x|)(W_{r,t}+\phi_{r,t})( \frac{\partial W_{r,t}}{\partial r}+ \frac{\partial  \phi}{\partial r})dx\nonumber
\\[0.02mm]
&-\frac{1}{8\pi}\int_{\R^3}\int_{\R^3}\frac{(W_{r,t}+\phi_{r,t})^2(y)(W_{r,t}+\phi_{r,t})( \frac{\partial W_{r,t}}{\partial r}+ \frac{\partial  \phi}{\partial r})(x)}{|x-y|}.
\end{align}
Similarly,
\begin{align}\label{Two}
 \overline{F}_{t}=&
 \int_{\R^3}-\Delta\left(W_{r,t}+\phi_{r,t}\right)( \frac{\partial W_{r,t}}{\partial t}+ \frac{\partial  \phi}{\partial t})+V(|x|)(W_{r,t}+\phi_{r,t})( \frac{\partial W_{r,t}}{\partial t}+ \frac{\partial  \phi}{\partial t})dx\nonumber
\\[0.02mm]
&-\frac{1}{8\pi}\int_{\R^3}\int_{\R^3}\frac{(W_{r,t}+\phi_{r,t})^2(y)\big(W_{r,t}+\phi_{r,t}\big)( \frac{\partial W_{r,t}}{\partial t}+ \frac{\partial  \phi}{\partial t})(x)}{|x-y|}.
\end{align}
From \eqref{SpaceE}, we know that
\begin{align*}
\mathbb{E}_{1}^{**}=
\text{span} \left\{ \frac{\partial U_{\overline{x}_{j}}}{\partial r},\frac{\partial U_{\underline{x}_{j}}}{\partial r},\frac{\partial U_{\overline{x}_{j}}}{\partial t},\text{and} \frac{\partial U_{\underline{x}_{j}}}{\partial t},j=1,2,3 \right\}.
\end{align*}
Since $R'(\phi_{r,t})=0 $ in $\mathbb{E}_{1}$ and $\phi_{r,t}\in\mathbb{E}_{1} $, we have
\begin{align}\label{R'}
\langle R', \phi_{r,t} \rangle =&\int_{\R^3}\left[-\triangle(W_{r,t}+\phi_{r,t})+V(|x|)(W_{r,t}+\phi_{r,t})\right]\phi_{r,t}dx\nonumber
\\[0.02mm]
& -\frac{1}{8\pi}\int_{\R^3}\int_{\R^3}\frac{(W_{r,t}+\phi_{r,t})^2(y)(W_{r,t}+\phi_{r,t})\phi_{r,t}(x)}{|x-y|}=0.
\end{align}
So, we can get that
\begin{align*}
-\Delta(W_{r,t}+\phi_{r,t})+V(|x|)(W_{r,t}+\phi_{r,t})-\frac{1}{8\pi}\int_{\R^3}\frac{(W_{r,t}+\phi_{r,t})^2(y)}{|x-y|}dy(W_{r,t}+\phi_{r,t})\in\mathbb{E}_{1}^{\bot}.
\end{align*}
Then, there exists some constants $ C_{i},C_{j},C_{p},C_{q},$ satisfying
\begin{align}\label{bot}
&-\Delta(W_{r,t}+\phi_{r,t})+V(|x|)(W_{r,t}+\phi_{r,t})-\frac{1}{8\pi}\int_{\R^3}\frac{(W_{r,t}+\phi_{r,t})^2(y)}{|x-y|}dy(W_{r,t}+\phi_{r,t})\nonumber
\\[0.02mm]
&=\sum_{i=1}^m C_{i} \frac{\partial U_{\overline{x}_{i}}}{\partial r}+\sum_{j=1}^m C_{j}\frac{\partial U_{\underline{x}_{j}}}{\partial r}+\sum_{p=1}^m C_{p}\frac{\partial U_{\overline{x}_{p}}}{\partial t}+\sum_{q=1}^m C_{q}\frac{\partial U_{\underline{x}_{q}}}{\partial t}.
\end{align}
So we have
\begin{align*}
&\overline{F}_{r}+\overline{F}_{t}
\nonumber
\\[0.02mm]
&=\Big \langle \sum_{i=1}^m C_{i} \frac{\partial U_{\overline{x}_{i}}}{\partial r}+\sum_{j=1}^m C_{j}\frac{\partial U_{\underline{x}_{j}}}{\partial r}+\sum_{p=1}^m C_{p}\frac{\partial U_{\overline{x}_{p}}}{\partial t}+\sum_{q=1}^m C_{q}\frac{\partial U_{\underline{x}_{q}}}{\partial t},\frac{\partial W_{r,t}}{\partial r}+ \frac{\partial W_{r,t}}{\partial t}+\frac{\partial  \phi}{\partial r}+ \frac{\partial  \phi}{\partial t}\Big \rangle\nonumber
\\[0.02mm]
&=\Big\langle \sum_{i=1}^m C_{i} \frac{\partial U_{\overline{x}_{i}}}{\partial r}+\sum_{j=1}^m C_{j}\frac{\partial U_{\underline{x}_{j}}}{\partial r}+\sum_{p=1}^m C_{p}\frac{\partial U_{\overline{x}_{p}}}{\partial t}+\sum_{q=1}^m C_{q}\frac{\partial U_{\underline{x}_{q}}}{\partial t},\frac{\partial W_{r,t}}{\partial r}+ \frac{\partial W_{r,t}}{\partial t}\Big \rangle\nonumber
\\[0.02mm]
&\quad +\Big \langle \sum_{i=1}^m C_{i} \frac{\partial U_{\overline{x}_{i}}}{\partial r}+\sum_{j=1}^m C_{j}\frac{\partial U_{\underline{x}_{j}}}{\partial r}+\sum_{p=1}^m C_{p}\frac{\partial U_{\overline{x}_{p}}}{\partial t}+\sum_{q=1}^m C_{q}\frac{\partial U_{\underline{x}_{q}}}{\partial t},\frac{\partial  \phi}{\partial r}+ \frac{\partial  \phi}{\partial t}\Big \rangle
\nonumber
\\[0.02mm]
&=0.
\end{align*}
Noting that $\phi_{r,t}\in\mathbb{E}_{1}$,
 we have
\begin{align*}
\Big \langle\frac{\partial U_{\overline{x}_{i}}}{\partial r}, \phi \Big \rangle=\Big \langle\frac{\partial U_{\underline{x}_{j}}}{\partial r}, \phi \Big \rangle=\Big \langle\frac{\partial U_{\overline{x}_{p}}}{\partial t}, \phi \Big \rangle=\Big \langle\frac{\partial U_{\underline{x}_{q}}}{\partial t}, \phi \Big \rangle
=0.
\end{align*}
By the direct calculations, we have
$$
\Big \langle\frac{\partial U_{\overline{x}_{i}}}{\partial r},\frac{\partial \phi}{\partial r}  \Big \rangle=-\Big \langle\frac{\partial^{2} U_{\overline{x}_{i}}}{\partial r^{2}}, \phi \Big \rangle, \quad
\Big \langle\frac{\partial U_{\underline{x}_{i}}}{\partial r}, \frac{\partial \phi}{\partial r}\Big \rangle=-\Big \langle\frac{\partial^{2} U_{\underline{x}_{i}}}{\partial r^{2}}, \phi \Big \rangle ,$$
$$\Big \langle\frac{\partial U_{\overline{x}_{i}}}{\partial t}, \frac{\partial \phi}{\partial t} \Big \rangle=-\Big \langle\frac{\partial^{2} U_{\overline{x}_{i}}}{\partial t^{2}}, \phi \Big \rangle,\quad
\Big \langle\frac{\partial U_{\underline{x}_{i}}}{\partial t}, \frac{\partial \phi}{\partial t} \Big \rangle=-\Big \langle\frac{\partial^{2} U_{\underline{x}_{i}}}{\partial t^{2}}, \phi \Big \rangle,
$$
$$
\Big \langle\frac{\partial U_{\overline{x}_{i}}}{\partial t},\frac{\partial \phi}{\partial r}  \Big \rangle=-\Big \langle\frac{\partial^{2} U_{\overline{x}_{i}}}{\partial r \partial t}, \phi \Big \rangle, \quad
\Big \langle\frac{\partial U_{\underline{x}_{i}}}{\partial t}, \frac{\partial \phi}{\partial r} \Big \rangle=-\Big \langle\frac{\partial^{2} U_{\underline{x}_{i}}}{\partial r\partial t}, \phi \Big \rangle ,$$
$$\Big \langle\frac{\partial U_{\overline{x}_{i}}}{\partial r}, \frac{\partial \phi}{\partial t} \Big \rangle=-\Big \langle\frac{\partial^{2} U_{\overline{x}_{i}}}{\partial r\partial t}, \phi \Big \rangle,\quad
\Big \langle\frac{\partial U_{\underline{x}_{i}}}{\partial r}, \frac{\partial \phi}{\partial t} \Big \rangle=-\Big \langle\frac{\partial^{2} U_{\underline{x}_{i}}}{\partial r\partial t}, \phi \Big \rangle,
$$
which implies that
$$
\Big| \Big \langle\frac{\partial U_{\overline{x}_{i}}}{\partial r},\frac{\partial \phi}{\partial r}  \Big \rangle\Big|=\Big| -\Big \langle\frac{\partial^{2} U_{\overline{x}_{i}}}{\partial r^{2}}, \phi \Big \rangle\Big| =O(  \| \phi\|)=O\Big(\Big(\frac{1}{m}\Big)^{\frac{2n-1}{2(1-n)}+\sigma}\Big).
$$
Similarly, we can prove that the others have the same estimations. So we can get
\begin{align*}
&\overline{F}_{r}+\overline{F}_{t}
\nonumber
\\[0.02mm]
&=\Big \langle \sum_{i=1}^m C_{i} \frac{\partial U_{\overline{x}_{i}}}{\partial r}+\sum_{j=1}^m C_{j}\frac{\partial U_{\underline{x}_{j}}}{\partial r}+\sum_{p=1}^m C_{p}\frac{\partial U_{\overline{x}_{p}}}{\partial t}+\sum_{q=1}^m C_{q}\frac{\partial U_{\underline{x}_{q}}}{\partial t},\frac{\partial W_{r,t}}{\partial r}+ \frac{\partial W_{r,t}}{\partial t}\Big \rangle +O\Big(\big(\frac{1}{m}\big
)^{\frac{2n-1}{2(1-n)}+\sigma}\Big) \nonumber
\\[0.02mm]
&=\sum_{i=1}^m C_{i} \Big(\frac{\partial U_{\overline{x}_{i}}}{\partial r}\Big)^{2}+\sum_{j=1}^m C_{j}\Big(\frac{\partial U_{\underline{x}_{j}}}{\partial r}\Big)^{2}+\sum_{p=1}^m C_{p}\Big(\frac{\partial U_{\overline{x}_{p}}}{\partial t}\Big)^{2}+\sum_{q=1}^m C_{q}\Big(\frac{\partial U_{\underline{x}_{q}}}{\partial t}\Big)^{2}+O\Big(\big(\frac{1}{m}\big
)^{\frac{2n-1}{2(1-n)}+\sigma}\Big)
\\[0.02mm]
&=0.
\end{align*}
So, we can get $C_{i},C_{j},C_{p},C_{q}=0 $. Consequently, $I'(W_{r,t}  +\phi_{r,t})=0 $ in $H^1(\mathbb{R}^3)$ which shows that $W_{r,t}  +\phi_{r,t}(y) $ is  a critical point of
 $ I(u)$ in $ H^1(\mathbb{R}^3)$. We complete the proof.
\end{proof}

Now we will prove the Theorem \ref{main1}.

\medskip
{\textit{Proof of Theorem \ref{main1}  } }:     By Proposition \ref{func}, we need to prove
that there  exists $(r_m,t_m)\in \mathbb{S}_{m}$, which is a critical point of $\overline{F}(r,t)$.

Indeed, from Proposition  \ref{func},   we have
\begin{align*}
\overline{F}(r,t) & =   I(W_{r,t} ) + {  \bf{ l } }  (\phi_{r,t}) + \frac 12  \langle{\bf{L}} \phi_{r,t},  \phi_{r,t}\rangle  -  {  \bf{ N } }(\phi_{r,t}) \nonumber
\\[0.02mm]
& =  I(W_{r,t} ) +  O_{m}(\|{\bf{l}}_{m} \|     \| \phi_{r,t}\| +      \|\phi_{r,t}\|^2  ) =  I(W_{r,t} )  +     O_{m}\Big(\frac{1}{m^{\frac{2q-1}{1-q}+\sigma}}\Big)\nonumber
\\[0.02mm]
& =
    m\Big(     \frac{A_{2}}{16\pi}+ \frac{A_{1}}{r^q}  -  \frac{A_{1}^2}{16\pi^2\sqrt{1-t^{2}}}\frac{m\ln m}{r}-\frac{m}{16\pi^2}\frac{1 }{r\sqrt{1-t^{2}}}\ln\frac{\pi}{t}A_{1}^{2}   \Big)+m O_{m}(\frac{1}{r^{q+\tau}})  \nonumber
   \\[0.02mm]
  &\quad+m O_{m}\Big(\frac{m^{2}}{ r^{2}(1-t^{2})}\Big)+\frac{m^{2} }{r\sqrt{1-t^{2}}}\ln\frac{\pi}{t}O_{m} \big(t^{2} |\ln t|^{-1}\big)+O_{m}\Big(\frac{1}{m^{\frac{2q-1}{1-q}+\sigma}}\Big),
\end{align*}
where $A_{1}, A_{2}$ are  constants  in \eqref{A1A2}.

Define
$$ \overline{F}_{1} ( r, t) =
   \frac{A_{2}}{16\pi}+ \frac{A_{1}}{r^q}  -  \frac{A_{1}^2}{16\pi^2\sqrt{1-t^{2}}}\frac{m\ln m}{r}-\frac{m}{16\pi^2}\frac{1 }{r\sqrt{1-t^{2}}}\ln\frac{\pi}{t}A_{1}^{2} .$$
So, we consider the  following system
\begin{align}\label{aot}
 \begin{cases}
 -A_1 \frac q{r^{q+1}}   +     \frac {A_1^2} {16\pi^2} \frac { m \ln m} {\sqrt{1-t^2}r^2}    +     \frac {A_1^2} {16\pi^2}\frac { m} {\sqrt{1-t^2}r^2}  \ln{\frac \pi t} =0,    \\[4mm]
     - \frac {t m \ln m} { r(1-t^2)^{\frac 32} }  - \frac{m t} {r (1-t^2)^{\frac 32} }  \ln{\frac \pi t} +\frac{m} {tr\sqrt{1-t^2} } = 0,
 \end{cases}
 \end{align}
ie,
 \begin{align}
 \begin{cases}
 -A_1 \frac q{r^{q+1}}   +     \frac {A_1^2} {16\pi^2} \frac { m \ln m} {\sqrt{1-t^2}r^2}    +     \frac {A_1^2} {16\pi^2}\frac { m} {\sqrt{1-t^2}r^2}  \ln{\frac \pi t} =0,  \\[4mm]
     - \frac {t m \ln m} { (1-t^2) }  - \frac{mt} { (1-t^2) }  \ln{\frac \pi t} +\frac{m} {t } = 0,
     \end{cases}
     \end{align}
ie,
 \begin{align}
 \begin{cases}
 -A_1 \frac q{r^{q-1}}   +      \frac {A_1^2} {16\pi^2}\frac { m \ln m} {\sqrt{1-t^2}}    +    \frac {A_1^2} {16\pi^2} \frac { m} {\sqrt{1-t^2}}  \ln{\frac \pi t} =0,   \\[4mm]
     - \frac {t  \ln m} { (1-t^2) }  - \frac{t} { (1-t^2) }  \ln{\frac \pi t} +\frac{1} {t } = 0.
     \end{cases}
     \end{align}
The problem is equivalent to the following fixed point
\begin{align}
%{\bf B}(r)&=
%\\[2mm]
{\bf A}(t)&= \frac { \ln m} { (1-t^2) }  +  \frac{1} { (1-t^2) }  \ln{\frac \pi t},  \text{where}~ {\bf A}(t):=  \frac{1} {t^2 }.
\end{align}
Then for any $(r,t)\in \mathbb{S}_{m}$, define
\begin{align}
  {\bf a} (t) = &   {\bf A}^{-1}  \Bigg( \frac { \ln m} { (1-t^2) }  +  \frac{1} { (1-t^2) }  \ln{\frac \pi t}\Bigg)
  \\ = & \frac 1{  \Big( \frac { \ln m} { (1-t^2) }  +  \frac{1} { (1-t^2) }  \ln{\frac \pi t}\Big)^{\frac12}  }
    \\ = &   \frac {(1-t^2)^{\frac12} } {  \Big( \ln m +   (\ln \pi - \ln t)\Big)^{\frac12}  }
     \\ = &   \frac {(1-t^2)^{\frac12} } {  \Big( \ln m +    \ln \pi  \Big)^{\frac12}  }    \frac 1{(1-\frac{\ln t} {\ln m +   \ln \pi }  )^{\frac 12} }.
 \end{align}

   By the direct calculations, we have
\begin{align}\label{b2}
 | \,  {\bf a}   (t_{1})  -  {\bf a}  (t_{2})\,    |\nonumber
\\[0.02mm]
  &=
\Big|\frac {(1-t_{1}^2)^{\frac12} } {  \Big( \ln m +    \ln \pi  \Big)^{\frac12}  }    \frac 1{(1-\frac{\ln t_{1}} {\ln m +    \ln \pi }  )^{\frac 12} }-\frac {(1-t_{2}^2)^{\frac12} } {  \Big( \ln m +    \ln \pi  \Big)^{\frac12}  }    \frac 1{(1-\frac{\ln t_{2}} {\ln m +    \ln \pi }  )^{\frac 12} }\Big|\nonumber
\\[0.02mm]
&=\frac{1+o(1)}{  \Big( \ln m +   \ln \pi  \Big)^{\frac12}   } \Big|(1-t_{1}^{2})^{\frac{1}{2}}-(1-t_{2}^{2}) ^{\frac{1}{2}} \Big|\nonumber
\\[0.02mm]
&=\frac{1+o(1)}{  \Big( \ln m +    \ln \pi  \Big)^{\frac12}   } \Big| t_{1} -t_{2} \Big|O(t)
\nonumber
\\[0.02mm]
&=O\Big(\frac{1}{\ln m}\Big)|t_{1}-t_{2}|=\theta|t_{1}-t_{2}|,
\end{align}
where $ 0<\theta<1$, and we have $t\sim  (\ln m)^{-\frac{1}{2}}$ by Remark \ref{remark6}.
With the help of the  Contraction Mapping principle, we can have that  there is a  fixed point $( \bar{r}_{m}, \bar{t}_{m} )  \in \bar {\mathbb{S} }_{m}$. In other words,  $ \overline{F}_{1} ( r,t) $ have a critical point  $( \bar{r}_{m}, \bar{t}_{m} )  \in \bar {\mathbb{S} }_{m}$.

Define
         \[    {   \bf B_{2}     } ( r, t )    = \left(
\begin{array}{cccc}
 \overline{F}_{1,rr} & \overline{F}_{1,r t}
\\[0.02mm]
\overline{F}_{1,r t}& \overline{F}_{1,t t}
\end{array}
\right). \]
By computing, we can get  \[      \overline{F}_{1,r r} \big| _{( r, t )   =( \bar{r}_m, \bar{t}_{m} )  }  > 0, \quad  \overline{F}_{1,t t} \big| _{( r, t )   =( \bar{r}_{m}, \bar{t}_{m} )  }  < 0 , \quad  \overline{F}_{1,r t} \big| _{( r, t )   =( \bar{r}_m, \bar{t}_{m} )  }   >0 ,  \]
  and
  \[     \overline{F}_{1,r r} \times  \overline{F}_{1,t t} \big| _{( r, t )   =( \bar{r}_{m}, \bar{t}_{m} )  }  - \overline{F}_{1,r t}^2 \big| _{( r, t )   =( \bar{r}_{m}, \bar{t}_{m} )  }  < 0.   \]
 Consequently, we can obtain that  $ ( \bar{r}_m, \bar{t}_m ) $ is a maximum point of $\overline{F}_{1} ( r,t) $.
  So the
maximum of  $\overline{F}_{1} ( r,t) $ in  $ {\mathbb{S} }_{m}$ can be achieved.

 So, for the function $\overline{F}(r,t)$,   we can seek a maximum  point $(r_{m}, t_{m} ) $ that is an interior point of $   {\mathbb{S} }_{m}$.  Thus, $(r_{m}, t_{m} ) $ is a critical point of $ \overline{F}(r,t)$.
Then
$$  W_{r_{m},t_{m}}  +     \phi_{r_{m},t_{m}}(y),$$ is a critical point of $I(u)$. The proof of  Theorem \ref{main1} is completed.
\qed
 \vspace{2mm}

\begin{remark} \label{remark6}
By \eqref{aot}, we have
\begin{align*}
  \overline{F}_{1,r}(r,t)  =  -   A_{1}    \frac{q }{r^{q+1 }}  +  \frac{A_{1}^2}{16\pi^2\sqrt{1-t^{2}}}\frac{m\ln m}{r^{2 }} +\frac{m A_{1}^2}{16\pi^2 r^{2 }\sqrt{1-t^{2}}}\ln\frac{\pi}{t}  =0.
\end{align*}
Since the second term is much bigger than the third term, we only keep the first term and the the second term. By \eqref{aot}, we also have
\begin{align*}
 \overline{F}_{1,t}(r,t)  =  -  \frac{t A_{1}^2}{16\pi^2 (1-t^{2})^{\frac{3}{2}}}\frac{m\ln m}{r}-\frac{t A_{1}^2}{16\pi^2 (1-t^{2})^{\frac{3}{2}}}\frac{m\ln\frac{\pi}{t}}{r}+\frac{ m A_{1}^2}{16r\pi^2 \sqrt{1-t^{2}}}\frac{1}{t}.
\end{align*}
For the simplicity of getting $t$, we only keep the first term and the the third term. Then, we consider the following system
 \begin{align}
 \begin{cases} \label{system1}
  \overline{F}_{1,r}(r,t)  =    -   A_{1}    \frac{q }{r^{q+1 }}  +  \frac{A_{1}^2}{16\pi^2\sqrt{1-t^{2}}}\frac{m\ln m}{r^{2 }}\approx 0,
 \\[2mm]
 \overline{F}_{1,t}(r,t)  =  -  \frac{t A_{1}^2}{16\pi^2 (1-t^{2})^{\frac{3}{2}}}\frac{m\ln m}{r}+\frac{ m A_{1}^2}{16r\pi^2 \sqrt{1-t^{2}} }\frac{1}{t} \approx 0.
\end{cases}
\end{align}
And we can get
$$r=\Big(\frac{A_{1}}{16q( \pi)^{2}    }\Big)^{\frac{1}{1-q}}\big(1+o(1)\big)(m \ln m)^{\frac{1}{1-q}}, t=\Big(1+o(1)  \Big) (  \ln m)^{-\frac{1}{2}}.$$
So we assume that ~$(r,t)$~satisfies~\eqref{H2}~.
\end{remark}

 \vspace{3mm}
 
 \appendix
\section{ Some known results and technical estimates}\label{sa}
Firstly, we  provide  some essential estimates.
\begin{lemma}\label{lemma0}
 For any $u,v,\mu,\varphi\in H^1(\R^3)$, we have
\begin{equation}\label{Hardy-LS}
   \int_{\R^3}\int_{\R^3}\frac{u(y)v(y)\omega(x)\upsilon(x)}{|x-y|} \leq C \|u\| \|v\| \|\omega\| \|\upsilon\|.
\end{equation}
\end{lemma}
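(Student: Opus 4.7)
\textbf{Plan of proof for Lemma \ref{lemma0}.}

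The inequality is a standard consequence of the Hardy--Littlewood--Sobolev (HLS) inequality combined with H\"older's inequality and the Sobolev embedding $H^1(\mathbb{R}^3) \hookrightarrow L^q(\mathbb{R}^3)$ for $2 \le q \le 6$. The plan is to first group the integrand as a convolution of two products and apply HLS, then split each product by H\"older, and finally bound each factor by the $H^1$ norm via Sobolev embedding.

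Concretely, I would first write
\[
\int_{\mathbb{R}^3}\int_{\mathbb{R}^3} \frac{u(y)v(y)\,\omega(x)\upsilon(x)}{|x-y|}\,dx\,dy
\;=\; \int_{\mathbb{R}^3}\int_{\mathbb{R}^3} \frac{(uv)(y)\,(\omega\upsilon)(x)}{|x-y|}\,dx\,dy,
\]
and apply the HLS inequality in $\mathbb{R}^3$ with kernel $|x-y|^{-1}$ (so $\lambda=1$, $N=3$). The HLS exponent condition $\tfrac{1}{p}+\tfrac{1}{r}+\tfrac{\lambda}{N}=2$ becomes $\tfrac{1}{p}+\tfrac{1}{r}=\tfrac{5}{3}$; the symmetric choice $p=r=\tfrac{6}{5}$ satisfies this and gives
\[
\int_{\mathbb{R}^3}\int_{\mathbb{R}^3} \frac{(uv)(y)\,(\omega\upsilon)(x)}{|x-y|}\,dx\,dy
\;\le\; C\,\|uv\|_{L^{6/5}(\mathbb{R}^3)}\,\|\omega\upsilon\|_{L^{6/5}(\mathbb{R}^3)}.
\]

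Next I would apply H\"older's inequality with the conjugate pair $\tfrac{1}{12/5}+\tfrac{1}{12/5}=\tfrac{5}{6}=\tfrac{1}{6/5}$ to each factor, obtaining
\[
\|uv\|_{L^{6/5}} \le \|u\|_{L^{12/5}}\|v\|_{L^{12/5}}, \qquad
\|\omega\upsilon\|_{L^{6/5}} \le \|\omega\|_{L^{12/5}}\|\upsilon\|_{L^{12/5}}.
\]
Since $\tfrac{12}{5}\in[2,6]$, the Sobolev embedding $H^1(\mathbb{R}^3)\hookrightarrow L^{12/5}(\mathbb{R}^3)$ applies, and because $V$ is bounded and bounded below by $V_1>0$, the norm $\|\cdot\|$ defined via $V$ is equivalent to the standard $H^1$ norm. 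Hence $\|u\|_{L^{12/5}}\le C\|u\|$, and similarly for $v,\omega,\upsilon$. Multiplying these four bounds gives the desired estimate.

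There is essentially no hard step: the only item to be careful about is verifying the exponent arithmetic in the HLS application (choosing $p=r=6/5$) and recording that the Sobolev embedding covers the intermediate exponent $12/5$. No cancellations, localizations, or scaling tricks are required, so I expect the proof to be very short.
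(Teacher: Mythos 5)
Your proposal matches the paper's proof essentially verbatim: apply Hardy--Littlewood--Sobolev to $\int\int |x-y|^{-1}(uv)(y)(\omega\upsilon)(x)$ with the symmetric exponent choice $\lambda=s=6/5$, then H\"older to split $\|uv\|_{L^{6/5}}\le\|u\|_{L^{12/5}}\|v\|_{L^{12/5}}$, and finally the Sobolev embedding $H^1(\R^3)\hookrightarrow L^{12/5}(\R^3)$ together with the norm equivalence coming from $V$ being bounded above and below. The reasoning and exponent arithmetic are all correct; no discrepancy with the paper's argument.
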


 \begin{proof}
 By  Hardy-Littlewood-Sobolev inequality with $\frac{1}{\lambda}+\frac{1}{s}+\frac13=2$, H\"{o}lder inequality and Sobolev inequality, we can get
\begin{equation}\label{Hardy-LS}
   \int_{\R^3}\int_{\R^3}\frac{u(y)v(y)\omega(x)\upsilon(x)}{|x-y|}\leq \|uv\|_{L^\lambda(\R^3)}\|\omega\upsilon\|_{L^s(\R^3)} \leq C \|u\| \|v\| \|\omega\| \|\upsilon\|.
\end{equation}
\end{proof}

\medskip
\begin{lemma}\label{lemma1}
For  $r,t $  being  the parameters   in \eqref{overunder}  and  any $\delta \in (0,1] $, there  exists $ C>0$ such that
\begin{align*}
\sum_{i =2}^m  U_{ \overline{x}_{j}} (y)& \leq C e^{- \delta \sqrt{1-t^2} r\frac{\pi}{m}}e^{   -(1-\delta) |y-\overline{x}_{1}| },  \quad \text{for all} ~  y \in \Omega_{1}^+,
\end{align*}
\begin{align*}
\sum_{i =2}^m  U_{ \underline{x}_{j}} (y)& \leq C e^{- \delta \sqrt{1-t^2} r\frac{\pi}{m}} e^{   -(1-\delta) |y-\overline{x}_{1}| },  \quad \text{for all} ~  y \in \Omega_{1}^+,
\end{align*}
and
\begin{align*}
 U_{ \underline{x}_{1}} (y)\leq C   e^{ -   \delta  t r}   e^{-(1-\delta) | y -   \overline{x}_{1}  | },   \quad \text{for all} ~  y \in \Omega_{1}^+.
\end{align*}
\end{lemma}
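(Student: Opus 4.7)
All three estimates reduce, via the pointwise decay \eqref{decay}, to suitable lower bounds on $|y-\overline{x}_j|$, $|y-\underline{x}_j|$, $|y-\underline{x}_1|$ for $y\in\Omega_1^+$. From \eqref{decay} we have, for every small $\eta>0$, a constant $C_\eta$ with $U(z)\le C_\eta e^{-(1-\eta)|z|}$ on $\R^3$; the $\eta$ can be absorbed into $\delta$ at the end by relabeling. Thus it suffices to produce exponential lower bounds on the relevant distances.

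I begin with two elementary geometric facts. \emph{(i) Voronoi property.} Writing $|y-\overline{x}_j|^2=\bigl|(y_1,y_2)-r\sqrt{1-t^2}(\cos\tfrac{2(j-1)\pi}{m},\sin\tfrac{2(j-1)\pi}{m})\bigr|^2+(y_3-rt)^2$, one sees that for $y\in\Omega_1$, whose planar angle lies in $[-\pi/m,\pi/m]$, the factor $\cos(\theta_y-\theta_j)$ is maximized at $j=1$, whence $|y-\overline{x}_j|\ge|y-\overline{x}_1|$ for every $j\ne 1$. \emph{(ii) Reflection inequality.} For $y_3\ge 0$ we expand $|y-\underline{x}_j|^2-|y-\overline{x}_j|^2=4\,y_3\,rt\ge 0$, so $|y-\underline{x}_j|\ge|y-\overline{x}_j|\ge|y-\overline{x}_1|$.

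Combining (i) with the triangle inequality $|y-\overline{x}_j|\ge|\overline{x}_1-\overline{x}_j|-|y-\overline{x}_1|$ and averaging yields $|y-\overline{x}_j|\ge\tfrac12|\overline{x}_1-\overline{x}_j|$. For $\delta\in(0,1]$ I split
\[
|y-\overline{x}_j| \;\ge\; \tfrac{\delta}{2}|\overline{x}_1-\overline{x}_j|+(1-\delta)|y-\overline{x}_1|,
\]
and insert this into the decay bound for $U$. Using $|\overline{x}_1-\overline{x}_j|=2r\sqrt{1-t^2}\sin\tfrac{(j-1)\pi}{m}$ together with $\sin\theta\ge 2\theta/\pi$ on $[0,\pi/2]$, the sum $\sum_{j=2}^m e^{-(\delta/2)|\overline{x}_1-\overline{x}_j|}$ becomes a geometric-type series (after the symmetric folding $j\leftrightarrow m-j+2$) bounded by $C e^{-\delta'\sqrt{1-t^2}r\pi/m}$ for a slightly smaller $\delta'$, which we rename $\delta$. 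This proves the first inequality. For the second, by (ii) we have $|y-\underline{x}_j|\ge|y-\overline{x}_1|$, and the identity $|\overline{x}_1-\underline{x}_j|^2=|\overline{x}_1-\overline{x}_j|^2+4r^2t^2\ge|\overline{x}_1-\overline{x}_j|^2$ allows the same argument verbatim.

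For the third bound, $y\in\Omega_1^+$ forces $y_3\ge 0$, so $|y-\underline{x}_1|\ge y_3+rt\ge rt$; together with $|y-\underline{x}_1|\ge|y-\overline{x}_1|$ from (ii), I split
\[
|y-\underline{x}_1| \;\ge\; \delta\, rt+(1-\delta)|y-\overline{x}_1|
\]
and again apply the decay of $U$. The only mildly delicate point is matching the sharp exponent $\delta\sqrt{1-t^2}r\pi/m$ in the first two bounds, which forces precise use of $\sin\theta\ge 2\theta/\pi$ and a careful symmetric geometric summation over $j\in\{2,\dots,m\}$; everything else is routine.
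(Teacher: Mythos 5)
The paper gives no proof of this lemma: it merely remarks that the argument is ``similar to Lemma A.1 in \cite{wei&yan}.'' Your blind proof supplies exactly that standard argument, and its three ingredients are all correct: \emph{(i)} the Voronoi property $|y-\overline{x}_j|\ge |y-\overline{x}_1|$ for $y\in\Omega_1$, obtained by writing $|y-\overline{x}_j|^2$ in polar form and noting $\cos(\theta_y-\theta_j)\le\cos\theta_y$; \emph{(ii)} the reflection identity $|y-\underline{x}_j|^2-|y-\overline{x}_j|^2=4y_3rt\ge 0$ on $\{y_3\ge 0\}$; and \emph{(iii)} the convex split $|y-\overline{x}_j|\ge(1-\delta)|y-\overline{x}_1|+\tfrac{\delta}{2}|\overline{x}_1-\overline{x}_j|$, which follows by averaging the Voronoi and triangle inequalities. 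Using $|\overline{x}_1-\underline{x}_j|\ge|\overline{x}_1-\overline{x}_j|$ for the second sum and $|y-\underline{x}_1|\ge rt$ for the third term, all three inequalities reduce to a geometric-type summation; in particular the third estimate comes out exactly as stated.

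The one place to be more careful is the final ``relabel $\delta'\to\delta$'' step. Running the split with parameter $\delta$ and then invoking $\sin\theta\ge\frac{2\theta}{\pi}$ gives a leading factor $e^{-\frac{2\delta}{\pi}\sqrt{1-t^2}\,r\frac{\pi}{m}}$, which is \emph{weaker} than $e^{-\delta\sqrt{1-t^2}\,r\frac{\pi}{m}}$. You cannot repair this by enlarging the $\delta$ used in the split, since that simultaneously degrades the $(1-\delta)|y-\overline{x}_1|$ exponent in the wrong direction and also fails for $\delta>\tfrac{2}{\pi}$; nor can the deficit be absorbed into $C$, because $e^{\delta\sqrt{1-t^2}\,r(\frac{\pi}{m}-\sin\frac{\pi}{m})}=e^{O(r/m^3)}$ is unbounded once $q>\tfrac{2}{3}$ (so $\tfrac{1}{1-q}>3$). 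To be fair, this mismatch is inherited from the lemma's own phrasing: the exponent coming out of the Voronoi argument is $\delta\sqrt{1-t^2}\,r\sin\frac{\pi}{m}$, not $\delta\sqrt{1-t^2}\,r\frac{\pi}{m}$, and in all of the paper's uses only some decay $e^{-cr/m}$ is needed, so the imprecision is harmless. But the renaming should be stated as a change of constant in the exponent (e.g.\ prove the bound with $\sin\frac{\pi}{m}$, or with $\tfrac{2\delta}{\pi}$ in place of $\delta$), rather than silently relabeled away.
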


\begin{proof}
 Since the proof is just similar to  Lemma A.1 in \cite{wei&yan}, here we omit it.
\end{proof}
\medskip

In this appendices, we assume  $(r,t)  \in \mathbb{S}_{m} $, where    $\mathbb{S}_{m}$ is defined in \eqref{H2}.

\medskip

\begin{lemma}\label{lemma2}
  The following expansion is valid
   \begin{equation}\label{interact}
      \sum_{i=2}^m \int_{\R^3}\Psi_{ U_{\overline{x}_{1}}} U_{\overline{x}_{i}}^2=\frac{A_1^2}{8\pi^2}\frac{m\ln m}{r\sqrt{1-t^{2}}}+ O\Big(\frac{m^2}{r^{2}(1-t^{2})} \Big),
   \end{equation}
     where $A_{1}$ is defined in \eqref{A1A2}.

\end{lemma}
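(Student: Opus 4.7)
\medskip

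\noindent\textbf{Proof proposal for Lemma \ref{lemma2}.}

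The plan is to expand each integral $\int_{\R^3}\Psi_{U_{\overline{x}_1}}U_{\overline{x}_i}^2$ by exploiting the concentration of $U_{\overline{x}_i}^2$ near $\overline{x}_i$ together with the explicit decay of $\Psi_{U}$ at infinity given by \eqref{decay1}. First I would write $\Psi_{U_{\overline{x}_1}}(x) = \Psi_U(x-\overline{x}_1)$ and use \eqref{decay1}, i.e.\ $\Psi_U(z)=\lambda_3/|z|+O(1/|z|^2)$ as $|z|\to\infty$, to obtain, for $x$ in the support-mass region of $U_{\overline{x}_i}^2$ (so $|x-\overline{x}_i|$ is bounded while $|\overline{x}_1-\overline{x}_i|\to\infty$),
\begin{equation*}
\Psi_{U_{\overline{x}_1}}(x) \;=\; \frac{\lambda_3}{|\overline{x}_i-\overline{x}_1|}\Bigl(1+O\bigl(|\overline{x}_i-\overline{x}_1|^{-1}\bigr)\Bigr)+O\bigl(|\overline{x}_i-\overline{x}_1|^{-2}\bigr),
\end{equation*}
after a first-order Taylor expansion of $1/|x-\overline{x}_1|$ in $x-\overline{x}_i$ and using that $\int U^2(\xi)\xi\,d\xi=0$ by radial symmetry so the linear term vanishes upon integration against $U_{\overline{x}_i}^2$.

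Next I would translate variables $y=x-\overline{x}_i$ and use the exponential decay \eqref{decay} of $U$ to justify neglecting the contribution from $|y|$ large, yielding
\begin{equation*}
\int_{\R^3}\Psi_{U_{\overline{x}_1}}U_{\overline{x}_i}^2\;=\;\frac{\lambda_3\,\|U\|_{L^2}^{2}}{|\overline{x}_i-\overline{x}_1|}+O\Bigl(\frac{1}{|\overline{x}_i-\overline{x}_1|^{2}}\Bigr).
\end{equation*}
At this point I would identify the constant $A_1$ via the relation $A_1^2=8\pi\lambda_3\|U\|_{L^2}^2$ (this is fixed by \eqref{A1A2} in the paper), so that the leading coefficient matches the statement. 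Summing over $i=2,\ldots,m$ then gives
\begin{equation*}
\sum_{i=2}^{m}\int_{\R^3}\Psi_{U_{\overline{x}_1}}U_{\overline{x}_i}^2=\lambda_3\|U\|_{L^2}^2\sum_{i=2}^m\frac{1}{|\overline{x}_i-\overline{x}_1|}+O\!\left(\sum_{i=2}^m\frac{1}{|\overline{x}_i-\overline{x}_1|^2}\right).
\end{equation*}

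Finally, I would use the explicit formula $|\overline{x}_i-\overline{x}_1|=2r\sqrt{1-t^2}\sin\tfrac{(i-1)\pi}{m}$ together with the classical asymptotic
\begin{equation*}
\sum_{i=1}^{m-1}\frac{1}{\sin\frac{i\pi}{m}}\;=\;\frac{2m}{\pi}\ln m+O(m),\qquad \sum_{i=1}^{m-1}\frac{1}{\sin^2\frac{i\pi}{m}}=O(m^2),
\end{equation*}
to turn the leading sum into $\frac{m\ln m}{\pi r\sqrt{1-t^2}}(1+o(1))$ and the error sum into $O\bigl(m^2/(r^2(1-t^2))\bigr)$. Combining with the value of $A_1$ produces the stated equality. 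The main technical obstacle is making the first step uniform in $i$: the expansion of $\Psi_{U_{\overline{x}_1}}$ has to be valid across the whole effective support of $U_{\overline{x}_i}^2$, and the $O(|\overline{x}_i-\overline{x}_1|^{-2})$ remainder must be shown to be summable to the claimed $m^2/(r^2(1-t^2))$ order; careful use of the exponential decay of $U$ in \eqref{decay} together with a dyadic split of the $y$-integral is what controls this remainder and absorbs the $O(m)$ correction from the cosecant sum into the overall error.
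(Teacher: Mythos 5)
Your proposal follows essentially the same route as the paper's own proof: you collapse $\Psi_{U_{\overline{x}_1}}$ to its far-field behavior $\lambda_3/|z|$ (equivalently, the paper writes $\Psi_{U_{\overline{x}_1}}=\frac1{8\pi}\int\frac{U^2_{\overline{x}_1}(y)}{|\cdot-y|}\,dy$ and Taylor-expands $1/|x-y+(\overline{x}_1-\overline{x}_i)|$ about $1/|\overline{x}_1-\overline{x}_i|$), identify the coefficient $\lambda_3\|U\|_{L^2}^2=A_1^2/(8\pi)$, and finish with the same cosecant-sum asymptotics; both arguments also need the exponential decay of $U$ to justify the expansion over the whole effective support of $U_{\overline{x}_i}^2$. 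The one place your write-up is imprecise — and the paper shares this imprecision — is the final claim that the $O(m)$ subleading term of $\sum_{i=1}^{m-1}\csc\frac{i\pi}{m}$ is absorbed into the remainder $O\big(m^2/(r^2(1-t^2))\big)$: after dividing by $2r\sqrt{1-t^2}$ it gives an error $O\big(m/(r\sqrt{1-t^2})\big)$, and since $(r,t)\in\mathbb{S}_m$ forces $r\gg m$ (indeed $r\sim (m\ln m)^{1/(1-q)}$ with $\tfrac1{1-q}\geq 2$), this term actually dominates $m^2/(r^2(1-t^2))$; the expansion therefore really only yields a relative $O(1/\ln m)$ error in the leading term rather than the sharper remainder displayed in \eqref{interact}.
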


\begin{proof}
The direct calculation shows that
\begin{equation}\label{calculation}
\begin{aligned}
    \sum_{i=2}^m \int_{\R^3}\Psi_{ U_{\overline{x}_{1}}} U_{\overline{x}_{i}}^2&= \frac{1}{8\pi}\sum_{i=2}^m\int_{\R^3}\int_{\R^3}\frac{U_{\overline{x}_{1}}^2(y)U_{\overline{x}_{i}}^2(x)}{|x-y|}\\
    &=\frac{1}{8\pi}\sum_{i=2}^m\int_{\R^3}U^2(x)\int_{\R^3}U^2(y)\frac{1}{|x-y+(\overline{x}_{1}-\overline{x}_{i})|}\\
    &=\frac{1}{8\pi}\sum_{i=2}^m\frac{1}{|\overline{x}_{1}-\overline{x}_{i}|}\int_{\R^3}U^2(x)\int_{\R^3}U^2(y)
    +  O\Big(\sum_{i=2}^m\frac{1}{|\overline{x}_{1}-\overline{x}_{i}|^2}\Big).
\end{aligned}
\end{equation}

Observe that
\begin{equation}\label{distance}
\begin{aligned}
   \sum_{i=2}^m\frac{1}{|\overline{x}_{1}-\overline{x}_{i}|}&=\frac{1}{2r\sqrt{1-t^2}} \sum_{i=1}^{m-1}\frac{1}{ \sin\frac{i\pi}{m} }.
\end{aligned}
 \end{equation}
 It has been checked that
\begin{equation*}
\int_{\frac32}^{m-\frac{3}{2}}\frac{1}{\sin\frac{x\pi}{m}} \leq \sum_{i=1}^{m-1}\frac{1}{\sin\frac{i\pi}{m}}\leq \int_{\frac12}^{m-\frac{1}{2}}\frac{1}{\sin\frac{x\pi}{m}},
\end{equation*}
and
\begin{equation*}
  \lim_{m\rightarrow\infty}\frac{1}{m\ln m}\int_{\frac32}^{m-\frac{3}{2}}\frac{1}{\sin\frac{x\pi}{m}}= \lim_{m\rightarrow\infty}\frac{1}{m\log m}\int_{\frac12}^{m-\frac{1}{2}}\frac{1}{\sin\frac{x\pi}{m}}
  =\frac2\pi.
\end{equation*}
So, we can get
\begin{equation}\label{sum1i}
  \sum_{i=2}^m\frac{1}{|\overline{x}_{1}-\overline{x}_{i}|}=\frac{m \ln m}{\pi r\sqrt{1-t^2}}+o_{m}(1).
\end{equation}
From the definitions $ \overline{x}_{j}, \underline{x}_{j}$, we have
\begin{align*}
   |  \overline{x}_{i}- \overline{x}_{1}|^2  = 4r^2  ( 1- t^2)    \sin^2 { \frac{ ( i-1 )\pi  }{m}    }.
\end{align*}
Similarly, we obtain
\begin{equation}\label{2}
  O\Big(\sum_{i=2}^m\frac{1}{|\overline{x}_{1}-\overline{x}_{i}|^2}\Big)=  O\Big(\frac{m^{2}}{r^{2}(1-t^{2})} \Big).
\end{equation}
From \eqref{calculation}, \eqref{sum1i} and \eqref{2}, we can get \eqref{interact} .
\end{proof}
\begin{lemma}\label{lemma3}
There is a small enough constant $\delta>0,\sigma>0$   such that the following expansions are valid
\begin{align}  \label{suma}
 \int_{  \mathbb{R}^3}  U_{     \overline{x}_{1}  }   U_{\underline{x}_{1}  }    =   O_{m}( e^{  - 2(1-\delta) r t} ),
\end{align}
\begin{align}  \label{sum2}
\sum_{i =2}^m  \int_{  \mathbb{R}^3}  U_{     \overline{x}_{1}  }   U_{\overline{x}_{i}  }    =     O_{m}(     e^{  - 2\pi(1-\delta)  \sqrt{1-t^2}  \frac r m                }     +
e^{  - 2(1-\delta)(1+\sigma) \pi  \sqrt{1-t^2}  \frac r m                }  ),
\end{align}
and
\begin{align}\label{sum3}
   \sum_{i =1}^m \int_{  \mathbb{R}^3}    U_{\underline{x}_{i} }  U_{\overline{x}_{1} } & =   O_m\left(  e^{-2(1-\delta)  r t}        +      e^{- 2  \pi (1-\delta)(1+\sigma )\sqrt{1-t^2 }    \frac{r}{m}    }  \right ).
\end{align}
\end{lemma}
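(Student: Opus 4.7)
\medskip
\noindent\textbf{Proof proposal for Lemma \ref{lemma3}.} The plan is to reduce the three estimates to a single master estimate on pairwise interaction of exponentially decaying bumps, then evaluate the resulting geometric sums using the explicit positions in \eqref{overunder}.

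First I would record the elementary interaction bound: for any $\delta\in(0,1)$ and any two points $p,q\in\mathbb{R}^3$ with $|p-q|$ large,
\begin{align*}
\int_{\mathbb{R}^3} U(y-p)\, U(y-q)\, dy \leq C\, e^{-(1-\delta)|p-q|}.
\end{align*}
This is proved in the standard way using $U(x)\leq C(1+|x|)^{-1}e^{-|x|}$ from \eqref{decay}: split $\mathbb{R}^3$ into the half-space $\{|y-p|\leq|y-q|\}$ and its complement; on the first half $|y-q|\geq \tfrac12|p-q|$, and one writes $e^{-|y-q|}\leq e^{-(1-\delta/2)|y-q|}e^{-(\delta/2)|y-q|}$, uses $e^{-(1-\delta/2)|y-q|}\leq e^{-(1-\delta/2)|y-p|}\cdot e^{-(1-\delta/2)(|y-q|-|y-p|)}$ together with the triangle-inequality complement $|y-q|-|y-p|\geq$ (something converging to $|p-q|$ after integrating). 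Equivalently, one may recall the form of the Bessel-type convolution $U\ast U$ and read off the decay rate. In either case the $\delta$-slack absorbs polynomial prefactors.

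Granting this bound, \eqref{suma} is immediate: $|\overline{x}_{1}-\underline{x}_{1}|=2rt$, so
\begin{align*}
\int_{\mathbb{R}^3}U_{\overline{x}_{1}}U_{\underline{x}_{1}}\leq C e^{-(1-\delta)\cdot 2rt}=O_m\!\bigl(e^{-2(1-\delta)rt}\bigr).
\end{align*}
For \eqref{sum2}, I use $|\overline{x}_{1}-\overline{x}_{i}|=2r\sqrt{1-t^{2}}\sin\!\bigl((i-1)\pi/m\bigr)$, so the master estimate yields
\begin{align*}
\sum_{i=2}^{m}\int_{\mathbb{R}^3}U_{\overline{x}_{1}}U_{\overline{x}_{i}}\leq C\sum_{i=2}^{m}\exp\!\Bigl(-2(1-\delta)r\sqrt{1-t^{2}}\sin\tfrac{(i-1)\pi}{m}\Bigr).
\end{align*}
By the symmetry $i\leftrightarrow m-i+2$ and the convexity estimate $\sin(j\pi/m)\geq 2j/m$ for $1\leq j\leq m/2$, this sum is controlled by a geometric series whose first term is $e^{-2\pi(1-\delta)\sqrt{1-t^{2}}r/m}$ (from $i=2,m$) and whose second term is $e^{-4\pi(1-\delta)\sqrt{1-t^{2}}r/m}$, which can be written $e^{-2\pi(1-\delta)(1+\sigma)\sqrt{1-t^{2}}r/m}$ with $\sigma=1$; the ratio between consecutive terms is $e^{-2\pi(1-\delta)\sqrt{1-t^{2}}r/m}\to 0$ (since $r/m\sim(m\ln m)^{q/(1-q)}(\ln m)\to\infty$), so the series converges to its leading two terms, giving precisely the claimed bound.

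For \eqref{sum3}, I use $|\overline{x}_{1}-\underline{x}_{i}|^{2}=4r^{2}\bigl(t^{2}+(1-t^{2})\sin^{2}\!\tfrac{(i-1)\pi}{m}\bigr)$. Isolating $i=1$ gives the $e^{-2(1-\delta)rt}$ contribution from \eqref{suma}. For $i\geq 2$ I argue that for a suitably small $\sigma>0$,
\begin{align*}
\sqrt{t^{2}+(1-t^{2})\sin^{2}\tfrac{(i-1)\pi}{m}}\geq (1+\sigma)\sqrt{1-t^{2}}\,\bigl|\sin\tfrac{(i-1)\pi}{m}\bigr|,
\end{align*}
which holds as long as $t^{2}\geq(2\sigma+\sigma^{2})(1-t^{2})\sin^{2}((i-1)\pi/m)$. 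Since $\sin^{2}((i-1)\pi/m)\leq 1$ and $t\sim(\ln m)^{-1/2}$ by \eqref{par1}, one may choose $\sigma$ small and independent of $m$ for which this holds uniformly in $i\geq 2$. Summing the resulting bound by the same geometric-series argument as in \eqref{sum2}, but with the improved rate $(1+\sigma)$, produces the second term in \eqref{sum3}.

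The routine part is the interaction estimate, which is by now folklore. The only nontrivial point is the extraction of the additional $(1+\sigma)$ factor in \eqref{sum3}, which requires mixing the ``vertical'' contribution $t$ with the ``horizontal'' contribution $\sqrt{1-t^{2}}\sin((i-1)\pi/m)$ and then verifying consistency with the range \eqref{H2} for $(r,t)$; this is where one needs to be careful that $\sigma$ is chosen independent of $m$ and of the summation index $i$.
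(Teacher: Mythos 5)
Your treatment of \eqref{suma} and \eqref{sum2} is essentially the paper's own argument: the interaction bound $\int U(\cdot-p)U(\cdot-q)\leq Ce^{-(1-\delta)|p-q|}$, the explicit distances, the symmetry $i\leftrightarrow m-i+2$, and the geometric-series control over $i\in\{3,\dots,m/2\}$ (the paper uses a constant $c_0\in(\tfrac12,1]$ with $\sin\frac{(i-1)\pi}{m}\geq c_0\frac{(i-1)\pi}{m}$; your Jordan inequality is the case $c_0=2/\pi$, and the $\delta$-slack absorbs the harmless $\sin(\pi/m)$ vs.\ $\pi/m$ discrepancy, exactly as in the paper's \eqref{pro4}--\eqref{pro5}). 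That part is fine.

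For \eqref{sum3} there is a genuine gap. You try to extract the $(1+\sigma)$ factor via the pointwise inequality
$\sqrt{t^{2}+(1-t^{2})\sin^{2}\frac{(i-1)\pi}{m}}\geq(1+\sigma)\sqrt{1-t^{2}}\,|\sin\frac{(i-1)\pi}{m}|$
claimed to hold uniformly over all $i\geq 2$ with $\sigma$ independent of $m$. As you yourself note, this requires $t^{2}\geq(2\sigma+\sigma^{2})(1-t^{2})\sin^{2}\frac{(i-1)\pi}{m}$. But for $i$ near $m/2$ one has $\sin^{2}\frac{(i-1)\pi}{m}\approx 1$, while $t^{2}\sim(\ln m)^{-1}\to 0$ by \eqref{par1}/\eqref{H2}; so the condition forces $\sigma\lesssim(\ln m)^{-1}$, contradicting the requirement that $\sigma$ be a fixed positive constant. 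The inequality is simply false uniformly in $i$. The paper avoids this by not trying to be uniform: it reduces the sum over $i\geq 2$ to $O_m$ of its dominant $i=2,m$ contribution (where $\sin\frac{\pi}{m}\sim\pi/m$ is much smaller than $t\sim(\ln m)^{-1/2}$, so the gain factor $\bigl[1+\tfrac{t^{2}}{(1-t^{2})\sin^{2}(\pi/m)}\bigr]^{1/2}\to\infty$ is in fact arbitrarily large, far exceeding $1+\sigma$; see \eqref{mathbbI13} and \eqref{pro7}), and the terms with $\sin^{2}\frac{(i-1)\pi}{m}$ of order one are already exponentially smaller and absorbed by the geometric-sum argument. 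Your proof should be restructured along these lines: establish the gain only for the few dominant indices rather than uniformly, or alternatively use an elementary bound such as $2\sqrt{t^{2}+(1-t^{2})\sin^{2}\theta}\geq t+\sqrt{1-t^{2}}\sin\theta$ to peel off a uniform factor $e^{-(1-\delta)rt}$ from every term and then invoke \eqref{sum2}, which (using $mt\to\infty$) also yields a bound stronger than the claimed one.
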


\begin{proof}
Recalling the definitions $ \overline{x}_{i}, \underline{x}_{i}$, we can know
\begin{align*}
   |  \overline{x}_{i}- \overline{x}_{1}|^2  = 4r^2  ( 1- t^2)    \sin^2 { \frac{ ( i-1 )\pi  }{m}    },
\end{align*}
 \begin{align*}
   |  \overline{x}_{1}- \underline{x}_{1}|^2=4  r^2 t^2,
 \end{align*}
 and
\begin{align*}
 |  \underline{x}_{i}- \overline{x}_{1}|^2  =4  r^2 \Big[  ( 1- t^2)    \sin^2 { \frac{ ( i-1 )\pi  }{m}    }   +t^2     \Big].
\end{align*}
From  the property \eqref{decay} of $U$, we have
\begin{align}  \label{b}
 \int_{  \mathbb{R}^3}  U_{     \overline{x}_{1}  }   U_{\underline{x}_{1}  }    =   O_{m}( e^{  - (1-\delta) |  \overline{x}_{1}- \underline{x}_{1}| } )
    = O_{m}\left( e^{  - 2(1-\delta) r t} \right).
\end{align}

Next,  we calculate
 \begin{align} \label{pro3}
   \sum_{i =2}^m  \int_{  \mathbb{R}^3}  U_{     \overline{x}_{1}  }   U_{\overline{x}_{i}  }  & =  O_{m}\Big(  \sum_{i =2}^m  e^{-(1-\delta)|\overline{x}_{1} -      \overline{x}_{i}| }   \Big).
 \end{align}
Generally, we can suppose that  $m$ is even.
 Then
  \begin{align} \label{pro6}
    \sum_{i =2}^m  e^{-|\overline{x}_1 -      \overline{x}_i| }
&=     \sum_{i =3}^{ m/2}   e^{  - 2r    \sqrt{1-t^2}      \sin{\frac{( i-1 )\pi } {m }            }       }
\nonumber
\\[0.02mm] & \quad
+   \sum_{i = { m/2}  + 1}^{m-1}  e^{  - 2r    \sqrt{1-t^2}      \sin{\frac{( i-1 )\pi } {m }            }       }
+  2  e^{  - 2r    \sqrt{1-t^2}    \sin{\frac{ \pi } {m }            }         }.
 \end{align}
 Consider
 \begin{align*}
 c_{0} \frac{ ( i-1 )\pi  }{m}      \leq   \sin { \frac{ ( i-1 )\pi  }{m}    }     \leq c_{1} \frac{ ( i-1 )\pi  }{m}, \quad \text{for} ~i \in           \big\{ 3, \cdots,    \frac m2 \big\},
 \end{align*}
 with $ \frac 12  <  c_{0} \leq c_{1}\leq 1.$
Then we can derive
\begin{align}\label{pro4}
  \sum_{i=3}^{ m/2}   e^{  - 2r    \sqrt{1-t^2}      \sin{\frac{( i-1 )\pi } {k }            }       }
& \leq    \sum_{i=3}^{ m/2}   e^{  - 2r    \sqrt{1-t^2}     \frac{ c_{0}( i-1 )\pi } {m }                 }      \nonumber
    \\[0.02mm]  & =   \frac{   e^{  - 4r    \sqrt{1-t^2}     \frac{ c_{0}\pi } {m }                     }   -      e^{  - 2r    \sqrt{1-t^2}     \frac{ c_{0}  (\frac m2 ) \pi } {m }                     }          }{       (    1-  e^{  - 2r    \sqrt{1-t^2}     \frac{ c_{0} \pi } {m }                     }   )     }    \nonumber
   \\[0.02mm]
&  = O_{m}\left( e^{  - 2(1+\sigma) \pi  \sqrt{1-t^2}  \frac r m                }\right  ).
\end{align}
With the help of symmetry  of  function $\sin x$,  we have
  $$    \sum_{i = { m/2}  + 1}^{m-1}  e^{  - 2r    \sqrt{1-t^2}      \sin{\frac{( i-1 )\pi } {m }            }       } = O_{m}\left( e^{  - 2(1+\sigma) \pi  \sqrt{1-t^2}  \frac r m                } \right ),  $$ in the same  manner as  \eqref{pro4}.
In the following,  we can get
\begin{align}\label{pro5}
 e^{  - 2r    \sqrt{1-t^2}    \sin{\frac{ \pi } {m }            }         }  &=  e^{  - 2r    \sqrt{1-t^2}}\Big( \frac{ \pi } {m }      + O_{m}( \frac{ \pi^3 } {m^3 } ) \Big) \nonumber
   \\[0.02mm]
&  = e^{  - 2\pi  \sqrt{1-t^2}  \frac r m                }e^{  - 2r    \sqrt{1-t^2}       O_{m}( \frac{ \pi^3 } {m^3 }  )                }\nonumber
   \\[0.02mm]
&  = e^{  - 2\pi  \sqrt{1-t^2}  \frac r m                }   +  O_{m}( e^{  - 2(1+\sigma) \pi  \sqrt{1-t^2}  \frac r m                }  ).
\end{align}
So,
 \begin{align} \label{sum6}
    \sum_{i =2}^m  e^{-(1-\delta) |\overline{x}_{1} -      \overline{x}_{i}| }
&=    O_{m}\left(     e^{  - 2\pi(1-\delta)  \sqrt{1-t^2}  \frac r m                }     +
e^{  - 2(1-\delta)(1+\sigma) \pi  \sqrt{1-t^2}  \frac r m                } \right ).
\end{align}
Following from \eqref{pro3} to \eqref{pro5},  we have
\begin{align}\label{sum0}
   \sum_{i =2}^m  \int_{  \mathbb{R}^3}  U_{     \overline{x}_{1}  }   U_{\overline{x}_{i}  }    =   O_{m}\left(     e^{  - 2\pi(1-\delta)  \sqrt{1-t^2}  \frac r m                }     +
e^{  - 2(1-\delta)(1+\sigma) \pi  \sqrt{1-t^2}  \frac r m                }  \right).
 \end{align}
Finally, we estimate
\begin{align}\label{mathbbI13}
 \sum_{i =1}^m \int_{  \mathbb{R}^3}    U_{\underline{x}_{i} }  U_{\overline{x}_{1} }
%&  =  \int_{  \mathbb{R}^3}     U_{\underline{x}_{1} }  U_{\overline{x}_{1}  }  +   \sum_{i =2}^m\int_{  \mathbb{R}^3}  U_{\underline{x}_{j} }  U_{\overline{x}_{1}  }       \nonumber
%   \\[1mm]
&  =     O_{m} \Big(   e^{- (1-\delta)    |  \overline{x}_{1}- \underline{x}_{1}| }   +    \sum_{i =2}^m  e^{- (1-\delta)   |  \underline{x}_{i}- \overline{x}_{1}| } \Big)
\nonumber
   \\[0.02mm]
&  =   O_{m}\Big(   e^{-2(1-\delta) r t} +   \sum_{i =2}^m  e^{-  2 (1-\delta) r \Big[  ( 1- t^2)    \sin^2 { \frac{ ( i-1 )\pi  }{m}    }   +t^2     \Big]^{\frac 12} } \Big)
\nonumber
   \\[0.02mm]
&  =O_{m} \Big(    e^{-2(1-\delta) r t} + 2    e^{-    2 (1-\delta) r \Big[  ( 1- t^2)    \sin^2 { \frac{   \pi  }{m}    }   +t^2     \Big]^{\frac 12} } \Big  ).
\end{align}

Following from  $(r,t)  \in \mathbb{S}_{m}$,  we can get
\begin{align}\label{pro7}
e^{-    2  r \Big[  ( 1- t^2)    \sin^2 { \frac{   \pi  }{m}    }   +t^2     \Big]^{\frac 12} }    & =     e^{-    2  r        ( 1- t^2)^{\frac 12}   \sin { \frac{ \pi  }{m}    } \Big[  1  +   \frac{t^2 }{  ( 1- t^2)    \sin^2 { \frac{  \pi  }{m}    } }        \Big]^{\frac 12} } =  O_{m}( e^{-    2 (1+\sigma )  r        ( 1- t^2)^{\frac 12}     { \frac{ \pi  }{m}    } }).
\end{align}

Then combining \eqref{mathbbI13} and \eqref{pro7} together,   we have
\begin{align}
   \sum_{i =1}^m \int_{  \mathbb{R}^3}    U_{\underline{x}_{i} }  U_{\overline{x}_{1} } & =     O_{m}\Big(  e^{-2(1-\delta)  r t}        +      e^{- 2  \pi (1-\delta)(1+\sigma )\sqrt{1-t^2 }    \frac{r}{m}    } \Big ).
\end{align}
The proof of Lemma \ref{lemma3} is completed.
\end{proof}

\begin{lemma}\label{lemma4}
There exists a  positive constant $C$  such that the following estimate is valid
\begin{align}\label{lemma4.1}
 \int_{ \Omega_{1}^{+}}\Big(\int_{\R^3}\frac{(U_{ \overline{x}_{1}  }+\sum_{i =2}^m U_{ \overline{x}_{i}  } )^2 (y)}{|x-y|}dy\Big)  v_{m} ^2\leq C\Big( e^{- (1-\delta)R  }+\frac{1}{R} \Big)\int_{ \Omega_{1}}v_{m}^2 (x) dx+C \int_{  B_{R}(  \overline{x}_{1}) }v_{m} ^2 (x)d.
\end{align}
\end{lemma}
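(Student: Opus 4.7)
The plan is to split the outer integration domain $\Omega_1^+$ according to the distance from $\overline{x}_1$, and for the far piece to further split the inner $y$-integration. Writing $W:=U_{\overline{x}_1}+\sum_{i=2}^m U_{\overline{x}_i}$, the last term $C\int_{B_R(\overline{x}_1)}v_m^2$ on the right will come from the near piece $\Omega_1^+\cap B_R(\overline{x}_1)$, on which the Newtonian-type inner integral $\int_{\R^3}\frac{W^2(y)}{|x-y|}\,dy$ is bounded uniformly in $m$ by a constant (essentially because $W\in L^\infty\cap L^2$ with norms independent of $m$, thanks to the exponential decay \eqref{decay} and the separation of centers quantified by Lemma \ref{lemma1}). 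This immediately yields
\[
\int_{\Omega_1^+\cap B_R(\overline{x}_1)}\Big(\int_{\R^3}\frac{W^2(y)}{|x-y|}\,dy\Big)v_m^2(x)\,dx\;\leq\; C\int_{B_R(\overline{x}_1)}v_m^2(x)\,dx.
\]

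On the far piece $\Omega_1^+\setminus B_R(\overline{x}_1)$, I would expand $W^2=U_{\overline{x}_1}^2+2U_{\overline{x}_1}\sum_{i\geq 2}U_{\overline{x}_i}+\big(\sum_{i\geq 2}U_{\overline{x}_i}\big)^2$ and treat each piece. For the leading term $U_{\overline{x}_1}^2$, I would split the $y$-integral at $B_{R/2}(\overline{x}_1)$: inside the ball, $|x-y|\geq R/2$ produces a $C/R$-contribution after using $\|U^2\|_{L^1}\leq C$; outside, the pointwise bound $U_{\overline{x}_1}^2(y)\leq Ce^{-(1-\delta)R}e^{-(1-\delta)|y-\overline{x}_1|}$ combined with the uniform $L^\infty$-estimate on the convolution $e^{-c|\cdot|}\ast|\cdot|^{-1}$ yields a $Ce^{-(1-\delta)R}$-contribution. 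Together these give
\[
\int_{\R^3}\frac{U_{\overline{x}_1}^2(y)}{|x-y|}\,dy\;\leq\; C\Big(e^{-(1-\delta)R}+\frac{1}{R}\Big),\qquad x\in\Omega_1^+\setminus B_R(\overline{x}_1).
\]

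For the cross term $U_{\overline{x}_1}\sum_{i\geq 2}U_{\overline{x}_i}$ and the off-diagonal parts $U_{\overline{x}_j}U_{\overline{x}_k}$ with $j\neq k$, I would apply the triangle inequality $|y-\overline{x}_j|+|y-\overline{x}_k|\geq|\overline{x}_j-\overline{x}_k|$ together with Lemma \ref{lemma1} to factor out exponentially small prefactors $e^{-c|\overline{x}_j-\overline{x}_k|}$. For the on-diagonal sum $\sum_{i\geq 2}\int_{\R^3}\frac{U_{\overline{x}_i}^2(y)}{|x-y|}\,dy$, the decay \eqref{decay1} of $\Psi_U$ gives $\int\frac{U_{\overline{x}_i}^2}{|x-y|}dy\leq C|x-\overline{x}_i|^{-1}$, and the symmetric distance estimate \eqref{distance-y} reduces $|x-\overline{x}_i|$ to $|\overline{x}_1-\overline{x}_i|$, so that \eqref{sum1i} yields a bound of order $Cm\ln m/r$, which in the regime $(r,t)\in\mathbb{S}_m$ is $o(1/R)$ for any fixed $R$ and hence absorbed. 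Integrating the resulting pointwise bound against $v_m^2$ over $\Omega_1^+\setminus B_R(\overline{x}_1)\subset\Omega_1$ produces the summand $C(e^{-(1-\delta)R}+1/R)\int_{\Omega_1}v_m^2$.

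The main obstacle is the careful bookkeeping of the cross and tail terms: one must ensure that the many-bump structure does not introduce a prefactor growing with $m$. This is precisely where the separation estimate Lemma \ref{lemma1} and the decay \eqref{decay1} of the Newton potential of $U$ are essential, keeping each pairwise cross contribution exponentially small in $|\overline{x}_j-\overline{x}_k|$ so that the resulting sum over the $m$ bumps remains $o(1)$ uniformly in the parameter range $\mathbb{S}_m$.
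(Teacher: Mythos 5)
Your proposal is correct and follows essentially the same strategy as the paper's proof: obtain a pointwise bound on the Newton potential $\Psi_{W}(x)=\frac{1}{8\pi}\int_{\R^3}\frac{W^2(y)}{|x-y|}\,dy$ for $x\in\Omega_1^+$, using the exponential decay of $U$, the decay of $\Psi_U$, and the separation of the centers $\overline{x}_i$, and then split the outer integration at $B_R(\overline{x}_1)$. Your direct appeal to \eqref{decay1} to control $\Psi_{U_{\overline{x}_i}}$ streamlines the paper's more laborious step of re-deriving that decay by decomposing the inner $y$-integral into the three regions $B_{d}(x)$, $B_{d}(\overline{x}_i-\overline{x}_1)$, and the complement; otherwise the ingredients and the outcome coincide.

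There is one slip in your justification of the near-piece bound that should be repaired: the claim that ``$W\in L^\infty\cap L^2$ with norms independent of $m$'' is false for the $L^2$ norm, since $\|W\|_{L^2}^2=m\|U\|_{L^2}^2+o(m)$ grows with $m$, so this cannot be the reason $\Psi_W$ is uniformly bounded on $B_R(\overline{x}_1)$. The correct (and genuinely nonlocal) argument is the one you already use on the far piece: for $x\in B_R(\overline{x}_1)$ and $m$ large, one has $|x-\overline{x}_i|\geq\frac12|\overline{x}_1-\overline{x}_i|$ for $i\geq 2$ by \eqref{distance-y}, so \eqref{decay1} gives $\sum_{i\geq 2}\Psi_U(x-\overline{x}_i)\leq C\sum_{i\geq 2}|\overline{x}_1-\overline{x}_i|^{-1}\leq C\,m\ln m/r=o(1)$ in the regime $\mathbb{S}_m$, while the pairwise cross contributions are exponentially small. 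It is this polynomially (not exponentially) decaying tail sum coming from far-away bumps --- not an $m$-uniform $L^2$ bound --- that controls $\Psi_W$ near $\overline{x}_1$, and it is precisely the feature that distinguishes this nonlocal estimate from the local nonlinear Schr\"odinger case. With that correction the proof is complete.
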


\begin{proof}
Firstly, letting $ d=\frac{|x-(\overline{x}_{i}-\overline{x}_{1})|}{2}$, if $y\in B_{d}(x)$, we have
\begin{align*}
&\int_{B_{d}(x)}\frac{\sum_{i =2}^m U^2\left(y-(\overline{x}_{i}-\overline{x}_{1})\right) }{|x-y|}dy  \leq C\sum_{i =2}^m \int_{B_{d}(x)}\frac{e^{-2 | y -( \overline{x}_{i}-  \overline{x}_{1} ) | }}{|x-y|}dy
\nonumber
\\[0.02mm]
&\leq C\sum_{i =2}^m \int_{B_{d}(x)}\frac{e^{- | x -( \overline{x}_{i}-  \overline{x}_{1} ) | }}{|x-y|}dy
=C\sum_{i =2}^m e^{- | x -( \overline{x}_{i}-  \overline{x}_{1} ) | }\int_{B_{d}(x)}\frac{1}{|x-y|}dy \nonumber
\\[0.02mm]
&=C\sum_{i =2}^m e^{- | x -( \overline{x}_{i}-  \overline{x}_{1} ) | }| x -( \overline{x}_{i}-  \overline{x}_{1} ) |^2.
\end{align*}
If $y\in B_{d}(\overline{x}_{i}-  \overline{x}_{1})$, we have
\begin{align*}
&\int_{B_{d}(\overline{x}_{i}-  \overline{x}_{1})}\frac{\sum_{i =2}^m U^2\big(y-(\overline{x}_{i}-\overline{x}_{1})\big) }{|x-y|}dy  \leq \sum_{i =2}^m \frac{C}{|x-(\overline{x}_{i}-  \overline{x}_{1})|}\int_{B_{d}(\overline{x}_{i}-  \overline{x}_{1})}e^{-2 | y -( \overline{x}_{i}-  \overline{x}_{1} ) | }dy
\nonumber
\\[0.02mm]
&\leq  \sum_{i =2}^m \frac{C}{|x-(\overline{x}_{i}-  \overline{x}_{1})|}\Big[ e^{- | x -( \overline{x}_{i}-  \overline{x}_{1} ) | } | x -( \overline{x}_{i}-  \overline{x}_{1} )|^2+e^{- | x -( \overline{x}_{i}-  \overline{x}_{1} ) | } | x -( \overline{x}_{i}-  \overline{x}_{1} ) |\nonumber
\\[0.02mm]&\quad+ e^{- | x -( \overline{x}_{i}-  \overline{x}_{1} ) | } +1  \big]\nonumber
\\[0.02mm]
&\leq C\sum_{i =2}^m \Big[ e^{- | x -( \overline{x}_{i}-  \overline{x}_{1} ) | } \sum_{l=-1 }^1| x -( \overline{x}_{i}-  \overline{x}_{1} ) |^l+ \frac{1}{|x-(\overline{x}_{i}-  \overline{x}_{1})|}\Big].
\end{align*}
If $y\in \mathbb{R}^3 \setminus B_{d}(x) \cup B_{d}( \overline{x}_{i}- \overline{x}_{1} ) $, we have $|y-x|\geq\frac{|x-(\overline{x}_{i}-  \overline{x}_{1})|}{2}$ and $|y-(\overline{x}_{i}-  \overline{x}_{1})|\geq\frac{|x-(\overline{x}_{i}-  \overline{x}_{1})|}{2}$.

If $|y-(\overline{x}_{i}-  \overline{x}_{1})|\geq2|x-(\overline{x}_{i}-  \overline{x}_{1})|$, then $|x-y|\geq\frac{1}{2}|x-(\overline{x}_{i}-  \overline{x}_{1})|.$

If $|y-(\overline{x}_{i}-  \overline{x}_{1})|\leq 2|x-(\overline{x}_{i}-  \overline{x}_{1})|$, then $|y-x|\geq\frac{1}{4}|x-(\overline{x}_{i}-  \overline{x}_{1})|.$

In summary $|y-(\overline{x}_{i}-  \overline{x}_{1})|\geq\frac{1}{4}|x-(\overline{x}_{i}-  \overline{x}_{1})|$.

Therefore,
\begin{align*}
&\int_{\mathbb{R}^3 \setminus B_{d}(x) \cup B_{d}( \overline{x}_{i}- \overline{x}_{1} )}\frac{\sum_{i =2}^m U^2\big(y-(\overline{x}_{i}-\overline{x}_{1})\big) }{|x-y|}dy  \leq C \sum_{i =2}^m \int_{\mathbb{R}^3 \setminus B_{d}(x) \cup B_{d}( \overline{x}_{i}- \overline{x}_{1} )}\frac{e^{-2 | y -( \overline{x}_{i}-  \overline{x}_{1} ) | }}{|x-(\overline{x}_{i}-  \overline{x}_{1})|}dy \nonumber
\\[0.02mm]
&\leq \sum_{i =2}^m \frac{C}{|x-(\overline{x}_{i}-  \overline{x}_{1})|}\int_{\mathbb{R}^3 \setminus  B_{d}( \overline{x}_{i}- \overline{x}_{1} )}e^{-2 | y -( \overline{x}_{i}-  \overline{x}_{1} ) | }dy
\nonumber
\\[0.02mm]
&\leq  \sum_{i =2}^m \frac{C}{|x-(\overline{x}_{i}-  \overline{x}_{1})|}\Big[ e^{- | x -( \overline{x}_{i}-  \overline{x}_{1} ) | } | x -( \overline{x}_{i}-  \overline{x}_{1} ) |^2+e^{- | x -( \overline{x}_{i}-  \overline{x}_{1} ) | } | x -( \overline{x}_{i}-  \overline{x}_{1} ) |+ e^{- | x -( \overline{x}_{i}-  \overline{x}_{1} ) | }  \Big]\nonumber
\\[0.02mm]
&\leq C\sum_{i =2}^m \Big[ e^{- | x -( \overline{x}_{i}-  \overline{x}_{1} ) | } \sum_{l=-1 }^1| x -( \overline{x}_{i}-  \overline{x}_{1} ) |^l\Big].
\end{align*}
Hence we have
\begin{align*}
&\int_{\mathbb{R}^3}\frac{\sum_{i =2}^m U^2\big(y-(\overline{x}_{i}-\overline{x}_{1})\big) }{|x-y|}dy  \leq C\sum_{i =2}^m \Big[ e^{- | x -( \overline{x}_{i}-  \overline{x}_{1} ) | } \sum_{l=-1 }^2| x -( \overline{x}_{i}-  \overline{x}_{1} ) |^l+ \frac{1}{|x-(\overline{x}_{i}-  \overline{x}_{1})|}\Big].
\end{align*}
Therefore we get
\begin{align*}
&\int_{ \Omega_{1}^{+}}\Big(\int_{\R^3}\frac{(U_{ \overline{x}_{1}  }+\sum_{i =2}^m U_{ \overline{x}_{i}  } )^2 (y)}{|x-y|}dy\Big)  v_{m} ^2=\int_{ \Omega_{1}^{+}}\Big(\int_{\R^3}\frac{\Big[U(y)+\sum_{i =2}^m U\big( y-(\overline{x}_{i}-\overline{x}_{1}) \big) \Big]^2 (y)}{|x-y-\overline{x}_{1}|}dy\Big)  v_{m} ^2
\nonumber
\\[0.02mm]
&=\int_{ \Omega_{1}^{+}-\overline{x}_{1}}\Big(\int_{\R^3}\frac{\Big[U(y)+\sum_{i =2}^m U\big ( y-(\overline{x}_{i}-\overline{x}_{1}) \big) \Big]^2 (y)}{|x-y|}dy\Big)  v_{m} ^2(x+\overline{x}_{1})
\nonumber
\\[0.02mm]
& \leq C\Big(  \int_{ \Omega_{1}^{+}-\overline{x}_{1}}\Big(\int_{\R^3}\frac{U^2 (y)}{|x-y|}dy\Big)  v_{m} ^2(x+\overline{x}_{1}) +\int_{ \Omega_{1}^{+}-\overline{x}_{1}}\Big(\int_{\R^3}\frac{\Big[\sum_{i =2}^m U\big ( y-(\overline{x}_{i}-\overline{x}_{1}) \big) \Big]^2 (y)}{|x-y|}dy\Big)  v_{m} ^2(x+\overline{x}_{1})\Big)
\nonumber
\\[0.02mm]
& \leq C\int_{ \Omega_{1}^{+}-\overline{x}_{1}}\Big(\Big[ e^{- | x| } \sum_{l=-1 }^2| x |^l+ \frac{1}{|x|}\Big]+\sum_{i =2}^m \Big[ e^{- | x -( \overline{x}_{i}-  \overline{x}_{1} ) | } \sum_{l=-1 }^2| x -( \overline{x}_{i}-  \overline{x}_{1} ) |^l+ \frac{1}{|x-(\overline{x}_{i}-  \overline{x}_{1})|}\Big]\Big)v_{m} ^2(x+\overline{x}_{1})\nonumber
\\[0.02mm]
&=C\int_{ \Omega_{1}^{+}}\Big(\Big[ e^{- | x-\overline{x}_{1}| } \sum_{l=-1 }^2| x-\overline{x}_{1} |^l+ \frac{1}{|x-\overline{x}_{1}|}\Big]+\sum_{i =2}^m \Big[ e^{- | x - \overline{x}_{i} | } \sum_{l=-1 }^2| x - \overline{x}_{i}|^l+ \frac{1}{|x-\overline{x}_{i}|}\big]\Big)v_{m} ^2(x)\nonumber
\\[0.02mm]
&\leq C\int_{ \Omega_{1}^{+}}\sum_{i =1}^m \Big[ e^{- | x - \overline{x}_{i} | } \sum_{l=-1 }^2| x - \overline{x}_{i}|^l+ \frac{1}{|x-\overline{x}_{i}|}\Big]v_{m} ^2(x)\nonumber
\\[0.02mm]
&=C \int_{ \Omega_{1}^{+}\cap B_{R}(  \overline{x}_{1}) }\sum_{i =1}^m \Big[ e^{- | x - \overline{x}_{i} | } \sum_{l=-1 }^2| x - \overline{x}_{i}|^l+ \frac{1}{|x-\overline{x}_{i}|}\Big]v_{m} ^2(x)\nonumber
\\[0.02mm]
&\quad+C\int_{ \Omega_{1}^{+}\setminus B_{R}(  \overline{x}_{1})}\sum_{i =1}^m \Big[ e^{- | x - \overline{x}_{i} | } \sum_{l=-1 }^2| x - \overline{x}_{i}|^l+ \frac{1}{|x-\overline{x}_{i}|}\Big]v_{m} ^2(x)\nonumber
\\[0.02mm]
&\leq C\big( e^{- (1-\delta)R  }+\frac{1}{R} \big)\int_{ \Omega_{1}}v_{m}^2 (x) dx+C \int_{  B_{R}(  \overline{x}_{1}) }v_{m} ^2 (x)dx.
\end{align*}
Indeed,
if $i=1$ and $ x\in\Omega_{1}^{+}\cap B_{R}(  \overline{x}_{1}) $, we can get
\begin{align*}
&\sum_{i =1}^m \Big[ e^{- | x - \overline{x}_{i} | } \sum_{l=-1 }^2| x - \overline{x}_{i}|^l+ \frac{1}{|x-\overline{x}_{i}|}\Big]=\Big[ e^{- | x - \overline{x}_{1} | } \sum_{l=-1 }^2| x - \overline{x}_{1}|^l+ \frac{1}{|x-\overline{x}_{1}|}\Big]=o(1);
\end{align*}
if $i\neq 1$ and $ x\in\Omega_{1}^{+}\cap B_{R}(  \overline{x}_{1}) $, we can get
\begin{align*}
&\sum_{i =2}^m \Big[ e^{- | x - \overline{x}_{i} | } \sum_{l=-1 }^2| x - \overline{x}_{i}|^l+ \frac{1}{|x-\overline{x}_{i}|}\Big]
\nonumber
\\[0.02mm]
&\leq C\sum_{i =2}^m \Big[ e^{- r\sqrt{1-t^{2}}\sin\frac{|i-1|\pi}{m} } \sum_{l=-1 }^2| r\sqrt{1-t^{2}}\sin\frac{|i-1|\pi}{m}|^l+ \frac{1}{|r\sqrt{1-t^{2}}\sin\frac{|i-1|\pi}{m}|}\Big]\nonumber
\\[0.02mm]
&\leq C\sum_{i =2}^m \Big[ e^{-\frac{1}{2} r\sqrt{1-t^{2}}\frac{|i-1|\pi}{m} } \sum_{l=-1 }^2| r\sqrt{1-t^{2}}\frac{|i-1|\pi}{m}|^l+ \frac{1}{|r\sqrt{1-t^{2}}\frac{|i-1|\pi}{m}|}\Big]\nonumber
\\[0.02mm]
&=o(1).
\end{align*}
So, we have
\begin{align*}
&\int_{ \Omega_{1}^{+}\cap B_{R}(  \overline{x}_{1}) }\sum_{i =1}^m \Big[ e^{- | x - \overline{x}_{i} | } \sum_{l=-1 }^2| x - \overline{x}_{i}|^l+ \frac{1}{|x-\overline{x}_{i}|}\Big]v_{m} ^2(x)\leq C \int_{  B_{R}(  \overline{x}_{1}) }v_{m} ^2 (x)dx.
\end{align*}
Similarly,
if $i=1$ and $x\in{ \Omega_{1}^{+}\setminus B_{R}(  \overline{x}_{1})}$, we can get
\begin{align*}
&\sum_{i =1}^m \Big[ e^{- | x - \overline{x}_{i} | } \sum_{l=-1 }^2\Big| x - \overline{x}_{i}\Big|^l+ \frac{1}{|x-\overline{x}_{i}|}\Big]\nonumber
\\[0.02mm]
&= \Big[ e^{- | x - \overline{x}_{1} | } \sum_{l=-1 }^2| x - \overline{x}_{1}|^l+ \frac{1}{|x-\overline{x}_{1}|}\Big]
\leq C\Big( e^{- (1-\delta)R  }+\frac{1}{R}  \Big),
\end{align*}
if $i\neq 1$ and $x\in{ \Omega_{1}^{+}\setminus B_{R}(  \overline{x}_{1})}$, we can get
\begin{align*}
&\sum_{i =2}^m \Big[ e^{- | x - \overline{x}_{i} | } \sum_{l=-1 }^2| x - \overline{x}_{i}|^l+ \frac{1}{|x-\overline{x}_{i}|}\Big]\nonumber
\\[0.02mm]
&= \sum_{i =2}^m \Big[ e^{- | x - \overline{x}_{i} | } \sum_{l=-1 }^2| x - \overline{x}_{i}|^l+ \frac{1}{|x-\overline{x}_{i}|}\Big]
\nonumber
\\[0.02mm]
&\leq C\sum_{i =2}^m \Big[ e^{- | x - \overline{x}_{i} | }\Big(  \frac{1}{|\overline{x}_{i}-\overline{x}_{1}|}+1+| x - \overline{x}_{i} |+ | x - \overline{x}_{i} |^2\Big) + \frac{1}{|\overline{x}_{i}-\overline{x}_{1}|}\Big]
\nonumber
\\[0.02mm]
&\leq C\sum_{i =2}^m e^{-\frac{|\overline{x}_{i}-\overline{x}_{1}|}{4}}\Big[ e^{- \frac{| x - \overline{x}_{i} | }{4}}| x - \overline{x}_{i} |^2\Big]+ C\frac{m\ln m}{\pi r \sqrt{1-t^{2}}}\nonumber
\\[0.02mm]
&\leq C e^{-\frac{r\pi\sqrt{1-t^{2}}}{4m}} + C \frac{1}{R}\leq C\Big( e^{- (1-\delta)R  } \Big)+ C\frac{1}{R}.
\end{align*}
So, we have
\begin{align*}
&\int_{ \Omega_{1}^{+}\setminus B_{R}(  \overline{x}_{1})}\sum_{i =1}^m \Big[ e^{- | x - \overline{x}_{i} | } \sum_{l=-1 }^2| x - \overline{x}_{i}|^l+ \frac{1}{|x-\overline{x}_{i}|}\Big]v_{m} ^2(x)\leq C\Big( e^{- (1-\delta)R  }+\frac{1}{R} \Big)\int_{ \Omega_{1}}v_{m}^2 (x) dx.
\end{align*}
\end{proof}

\vspace{3mm}
\section{Energy expansion }\label{sb}

In this section, we will estimate $I(W_{r,t} ).$
\begin{proposition}\label{func}
 For all $(r,t) \in \mathbb{S}_{m} $, there exists a  small constant $\tau>0$  such that
\begin{align}\label{2.1}
  I(W_{r,t} )
   &=   m\Big(     \frac{A_{2}}{16\pi}+ \frac{A_{1}}{r^q}  -  \frac{A_{1}^2}{16\pi^2\sqrt{1-t^{2}}}\frac{m\ln m}{r}-\frac{m}{16\pi^2}\frac{1 }{r\sqrt{1-t^{2}}}\ln\frac{\pi}{t}A_{1}^{2}   \Big)  \nonumber
   \\[0.02mm]
  &\quad+m O_{m}(\frac{1}{r^{q+\tau}})+m O_{m}\Big(\frac{m^{2}}{ r^{2}(1-t^{2})}\Big)+\frac{m^{2} }{r\sqrt{1-t^{2}}}\ln\frac{\pi}{t}O_{m} \big(t^{2} |\ln t|^{-1}\big),
\end{align}
where
\begin{align}\label{A1A2}
A_{1} =  \int_{  \mathbb{R}^3}  U^2,   \quad
A_{2}=\int_{\R^3}\int_{\R^3}\frac{U^2(y)U^2(x)}{|x-y|}.
\end{align}
\end{proposition}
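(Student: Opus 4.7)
The plan is to decompose $I(W_{r,t})$ into self-energies $\sum_{\alpha}I(U_\alpha)$ (over the $2m$ bumps $\overline{x}_1,\dots,\overline{x}_m,\underline{x}_1,\dots,\underline{x}_m$) plus interaction terms, and evaluate each piece by combining the ground-state equation $(-\Delta+1)U=\Psi_U U$ with the polygonal and $y_3$-reflection symmetries of $W_{r,t}$. A crucial preliminary observation is that the only bump-to-bump interactions contributing to the expansion at the stated order come from the nonlocal quartic term: the quadratic cross terms $\int(\nabla U_\alpha\cdot\nabla U_\beta+U_\alpha U_\beta)=\int\Psi_{U_\beta}U_\alpha U_\beta$ (via the ground-state equation) are exponentially small in $|\alpha-\beta|$, because the pointwise product $U_\alpha U_\beta$ itself decays exponentially by Lemma \ref{lemma3}; the potential cross terms $\int(V-1)U_\alpha U_\beta$ are even smaller.

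First I will handle the self-energies. Using $(-\Delta+1)U=\Psi_U U$ gives $\int(|\nabla U|^2+U^2)=\frac{A_2}{8\pi}$, so $I(U_{\overline{x}_1})=\frac{A_2}{32\pi}+\frac12\int(V(|z+\overline{x}_1|)-1)U^2(z)\,dz$. Plugging in the asymptotic $V(r)=1+\frac{b}{r^q}+O(r^{-q-\sigma})$ of $(\rm H_a)$ and using the exponential decay of $U^2$ to replace $|z+\overline{x}_1|$ by $r$ up to relative error $O(r^{-\sigma})$, the last integral equals $\frac{bA_1}{r^q}+O(r^{-q-\tau})$ for some $\tau>0$. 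Summing the $2m$ self-energies thus yields $m\cdot\frac{A_2}{16\pi}+m\cdot\frac{A_1}{r^q}$ (with the constant $b$ absorbed into $A_1$) plus a remainder of size $mO(r^{-q-\tau})$.

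The core of the proof is the nonlocal interaction. Expanding $W_{r,t}^4$ into a sum over quadruples of bump indices and using Lemma \ref{lemma3} to discard every quadruple in which either factor $W_{r,t}^2$ contains a mixed product $U_\alpha U_\beta$ with $\alpha\neq\beta$ (exponentially small), the dominant off-diagonal part of $\frac{1}{32\pi}\int\!\int\frac{W_{r,t}^4}{|x-y|}$ collapses to $\frac14\sum_{\alpha\neq\gamma}\int\Psi_{U_\alpha}U_\gamma^2$. By polygonal and $y_3$-symmetry each of the $2m$ bumps contributes the same sum, so this equals $\frac{m}{2}[S_{uu}+S_{u\ell}]$ where
\[
S_{uu}=\sum_{i=2}^m\int\Psi_{U_{\overline{x}_1}}U_{\overline{x}_i}^2,\qquad S_{u\ell}=\sum_{i=1}^m\int\Psi_{U_{\overline{x}_1}}U_{\underline{x}_i}^2.
\]
Lemma \ref{lemma2} evaluates $S_{uu}=\frac{A_1^2}{8\pi^2}\cdot\frac{m\ln m}{r\sqrt{1-t^2}}+O\!\bigl(\frac{m^2}{r^2(1-t^2)}\bigr)$; its contribution $-\frac{m}{2}S_{uu}$ to $I$ is precisely the $-\frac{A_1^2m^2\ln m}{16\pi^2r\sqrt{1-t^2}}$ summand of the proposition, up to the error $mO(m^2/(r^2(1-t^2)))$.

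The main obstacle is the upper-lower interaction $S_{u\ell}$, which has no analogue in \cite{Duan} because the parameter $t$ (measuring the vertical separation between the two polygons of bumps) is new. Using the Newtonian asymptotic $\int\Psi_{U_\alpha}U_\gamma^2=\frac{A_1^2}{8\pi|\alpha-\gamma|}+O(|\alpha-\gamma|^{-2})$ (the same mechanism underlying Lemma \ref{lemma2}), $S_{u\ell}$ reduces to evaluating $\sum_{i=1}^m\frac{1}{|\overline{x}_1-\underline{x}_i|}$ with $|\overline{x}_1-\underline{x}_i|^2=4r^2[(1-t^2)\sin^2\tfrac{(i-1)\pi}{m}+t^2]$. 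Approximating this discrete sum by the Riemann integral $\frac{m}{\pi}\int_0^\pi\frac{d\theta}{2r\sqrt{(1-t^2)\sin^2\theta+t^2}}$ and tracking the logarithmic singularity at $\theta=0$ (now cut off at $\theta\sim t$ rather than at $\theta\sim 1/m$, which is precisely where $t$ enters the energy nontrivially) yields the leading asymptotic $\frac{m\ln(\pi/t)}{\pi r\sqrt{1-t^2}}$ with Riemann-sum remainder of size $O(t^2|\ln t|^{-1})$ times the leading term; so $S_{u\ell}=\frac{A_1^2m\ln(\pi/t)}{8\pi^2r\sqrt{1-t^2}}\bigl(1+O(t^2|\ln t|^{-1})\bigr)$. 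Its contribution $-\frac{m}{2}S_{u\ell}$ to $I$ furnishes the $-\frac{A_1^2m^2\ln(\pi/t)}{16\pi^2r\sqrt{1-t^2}}$ summand together with the error $\frac{m^2\ln(\pi/t)}{r\sqrt{1-t^2}}\cdot O(t^2|\ln t|^{-1})$ appearing in the proposition. Assembling the self-energy, the potential correction and the two interaction contributions completes the expansion.
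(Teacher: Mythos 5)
The proposal is correct and arrives at the same expansion by essentially the same computations, but it is organized slightly differently from the paper. The paper splits $I(W_{r,t})=\mathbb{I}_1+\mathbb{I}_2-\mathbb{I}_3$ (unit quadratic form, $V-1$ correction, quartic term), then uses the ground-state equation to turn $\mathbb{I}_1$ into $m\frac{A_2}{8\pi}$ plus cross terms of the form $m\int\Psi_{U_{\overline{x}_1}}U_{\overline{x}_1}U_\beta$, and relies on the fact that these identical cross terms appear in $\mathbb{I}_3$ and therefore cancel exactly in $\mathbb{I}_1-\mathbb{I}_3$. You instead decompose into $2m$ self-energies $I(U_\alpha)$ plus an interaction correction, and dismiss those same cross terms outright as exponentially small (which they are, since $U_\alpha U_\beta\lesssim e^{-(1-\delta)|\alpha-\beta|}$ while $\Psi_{U_\alpha}$ is bounded); your decomposition makes the structure of the answer more transparent, at the cost of relying on the exponential-smallness estimate rather than an exact cancellation. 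The remaining two ingredients are identical to the paper's: the same-level interaction $S_{uu}$ is evaluated by Lemma \ref{lemma2}, and the cross-level interaction $S_{u\ell}$ reduces to $\sum_i|\overline{x}_1-\underline{x}_i|^{-1}$, which the paper imports from the cited result of Medina and Musso and which you re-derive via a Riemann-sum argument with the singularity cut off at $\theta\sim t$; both produce $\frac{m\ln(\pi/t)}{\pi r\sqrt{1-t^2}}$ with the relative error $O(t^2|\ln t|^{-1})$. One small slip worth noting: you state that $\frac12\int(V(|z+\overline{x}_1|)-1)U^2(z)\,dz$ ``equals $\frac{bA_1}{r^q}+O(r^{-q-\tau})$'', but with the leading $\frac12$ this half-energy is actually $\frac{bA_1}{2r^q}$; summing the $2m$ copies does give $m\frac{bA_1}{r^q}$, so your final total is right (and the paper has the same $b$ vs.\ $1$ discrepancy in stating $m\frac{A_1}{r^q}$).
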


\medskip

\begin{proof}
By direct computations, we have
\begin{align}\label{energyexpan}
 I(W_{r,t} ) &= \frac 12 \int_{  \mathbb{R}^3}   \Big\{  |  \nabla  W_{r,t} |^2 + V(|y|) W_{r,t}^2  \Big\} - \frac{1}{32\pi}\int_{\R^3}\int_{\R^3}\frac{(W_{r,t})^2(y)(W_{r,t})^2(x)}{|x-y|} \nonumber
 \\[0.02mm]
&  = \frac 12 \int_{  \mathbb{R}^3}   \Big\{ |  \nabla  W_{r,t} |^2  +   | W_{r,t}| ^2 \Big\}  \nonumber
+ \frac 12 \int_{  \mathbb{R}^3}  \big( V(|y|)- 1 \big )  | W_{r,t}| ^2\\[0.02mm]
   &\quad  - \frac{1}{32\pi}\int_{\R^3}\int_{\R^3}\frac{(W_{r,t})^2(y)(W_{r,t})^2(x)}{|x-y|}   \nonumber
   \\[0.02mm]
  &  = \mathbb{I}_{1} +  \mathbb{I}_{2} - \mathbb{I}_{3}.
\end{align}
\par
With the help of the symmetry and the equation for $U$, we can derive
 \begin{align}\label{mathbbI1}
 \mathbb{I}_{1} &= \frac 12 \int_{  \mathbb{R}^3}   \big( \{ |  \nabla  W_{r,t} |^2  +   | W_{r,t}| ^2 \big)    = \frac 12 \int_{  \mathbb{R}^3}    \big (   - \Delta W_{r,t} + W_{r,t}  \big ) W_{r,t} \nonumber
 \\[0.02mm]
&  = \frac 12 \int_{  \mathbb{R}^3} \sum_{j=1}^m  \big (    - \Delta  U_{\overline{x}_{j}  } + U_{\overline{x}_{j}  }       - \Delta U_{\underline{x}_{j}  }  + U_{\underline{x}_{j}  }   \big) \cdot  \sum_{i =1}^m  \big ( U_{\overline{x}_{i} } + U_{\underline{x}_{i} }    \big) \nonumber
 \\[0.02mm]
&  = \frac 12   \sum_{j=1}^m  \sum_{ i =1}^m   \int_{  \mathbb{R}^3}     \big (   \Psi_{ U_{\overline{x}_{j}}}U_{\overline{x}_{j}}+\Psi_{ U_{\underline{x}_{j}  }}
   U_{\underline{x}_{j}  }     \big) \cdot    \big ( U_{\overline{x}_{i} } + U_{\underline{x}_{i} }    \big)  \nonumber
   \\[0.02mm]
&  = \frac 12   \sum_{j=1}^m  \sum_{i =1}^m   \int_{  \mathbb{R}^3}     \big (    \Psi_{ U_{\overline{x}_{j}}}U_{\overline{x}_{j}}   U_{\overline{x}_{i}  }
  +   \Psi_{ U_{\underline{x}_{j}  }}
   U_{\underline{x}_{j}  }   U_{\overline{x}_{i}  }  + \Psi_{ U_{\overline{x}_{j}}}U_{\overline{x}_{j}}  U_ { \underline{x}_{i}  } + \Psi_{ U_{\underline{x}_{j}  }}
   U_{\underline{x}_{j}  }   U_{\underline{x}_{i}  }      \big)  \nonumber
   \\[0.02mm]
&  =   \frac{m}{8\pi}\int_{\R^3}\int_{\R^3}\frac{U^2(y)U^2(x)}{|x-y|}   + \frac{m}{8\pi} \sum_{i =2}^m  \int_{  \mathbb{R}^3} \frac{U^2_{\overline{x}_{1}  }}{|x-y|} U_{\overline{x}_{1}  }   U_{\overline{x}_{i}  }  +   \frac{m}{8\pi} \sum_{i =1}^m  \int_{  \mathbb{R}^3} \frac{U^2_{\overline{x}_{1}  }}{|x-y|} U_{\overline{x}_{1}  }    U_{\underline{x}_{i} }  .
 \end{align}

%Next, we compute $ \mathbb{I}_{13}$.
An elementary calculation  is as follows :
\begin{equation}\label{t}
 \frac{1}{|x-\overline{x}_{1}|^{t}}=\frac{1}{|\overline{x}_{1}|^{t}}\Big(1+O\Big(\frac{|x|}{|\overline{x}_{1}|}\Big)\Big), \ x\in B_{\frac{|\overline{x}_{1}|}{2}}(0),\forall\ t>0,
\end{equation}
 With symmetry and  Lemma \ref{lemma1},  we can calculate $\mathbb{I}_{2}$:
\begin{align}\label{mathbbI2}
\mathbb{I}_{2} & = \frac 12 \int_{  \mathbb{R}^3}  \big( V(|y|)- 1 \big )  | W_{r,t}| ^2
 \nonumber
   \\[0.02mm]
&  =      m   \int_{  \Omega_{1}^+}    \big( V(|y|)- 1 \big )  \Big( U_{ \overline{x}_{1}}    + U_{ \underline{x}_{1}}   + \sum_{j =2}^m  U_{ \underline{x}_{j}} +\sum_{j =2}^m  U_{ \overline{x}_{j}}  \Big)^2 \nonumber
   \\[0.02mm]
&  =     m   \int_{  \Omega_{1}^+}    \big( V(|y|)- 1 \big )   \Big( U_{ \overline{x}_{1}} + O_{m}\big(  e^{ -\frac 12 t r}   e^{-\frac 12 | y -   \overline{x}_{1}  | }    \nonumber
   \\[0.02mm]  &\quad
+   e^{- \frac 12 \sqrt{1-t^2} r\frac{\pi}{m}}e^{   -\frac 12 |y-\overline{x}_{1}| }+       e^{- \frac 12\sqrt{1-t^2} r\frac{\pi}{m}}  e^{   -\frac 12 |y-\overline{x}_{1}| }   \big) \Big)^2\nonumber
   \\[0.02mm]
&  =    m   \int_{  \Omega_{1}^+}  \big( V(|y|)- 1 \big )   U^2_{ \overline{x}_{1}}
 + m   O_{m}\Big(   \int_{  \Omega_{1}^+}  \big( V(|y|)- 1 \big )            e^{   -\frac 12 |y-\overline{x}_{1}| }      U_{ \overline{x}_{1}} \Big)  \nonumber
   \\[0.02mm]
&  =  m \Big(  \frac{A_{1}}{r^q}+ O_{m}(\frac{1}{r^{q+\tau}})\Big),
\end{align}
where $A_{1}   $ is defined in \eqref{A1A2}  and the last  equality is valid because of the asymptotic expression of $V(y)$ and \eqref{t}.
Moreover, for  any $ (r,t)\in S_{m}, $ and $ y\in \Omega_{1}^+$  we can derive
\begin{align}\label{t1}
U_{\overline{x}_{1}  }    U_{\underline{x}_{1} } \leq &C e^{- | \overline{x}_{1}  -  \underline{x}_{1}|      }   = C e^{-2tr },
\end{align}

\begin{align}\label{t2}
  U_{\overline{x}_{1} }  \sum_{j=2}^m U_{\overline{x}_{j}  }   \leq C \sum_{j=2}^m e^{- | \overline{x}_{1}  -  \overline{x}_{j}|      }   \leq C e^{-2\pi  \sqrt{1-t^2}  \frac r m }+ Ce^{-2(1+\tau) \pi  \sqrt{1-t^2}  \frac r m },
\end{align}
and
\begin{align}\label{t3}
U_{\overline{x}_{1}  }     \sum_{j=2}^m U_{\underline{x}_{j} } \leq C\sum_{j=2}^m  e^{- | \overline{x}_{1}  -  \underline{x}_{j}|      }   \leq C e^{-2\pi  \sqrt{1-t^2}  \frac r m }.
\end{align}
By \eqref{t1}, \eqref{t2}, \eqref{t3}, and Lemma \ref{lemma1}, we have
\begin{align*}
 \mathbb{I}_{3} & = \frac{1}{32\pi}\int_{\R^3}\int_{\R^3}\frac{(W_{r,t})^2(y)(W_{r,t})^2(x)}{|x-y|} \nonumber
   \\[0.02mm]
&  = \frac{1}{4}\int_{\R^3}\Psi_{ W_{r,t}}(W_{r,t})^2 dx
 \nonumber
   \\[0.02mm]
&  =    \frac{m}{2}\int_{\Omega_1^+}\Psi_{ W_{r,t}}\Big( U_{\overline{x}_{1} }+U_{\underline{x}_{1} }+ \sum_{j=2}^m U_{\overline{x}_{j} }+ \sum_{i=2}^m U_{\underline{x}_{i}}\Big)^2 dx\nonumber
   \\[0.02mm]
&  =    \frac{m}{2}\int_{\Omega_1^+}\Psi_{ W_{r,t}}\Big( U_{\overline{x}_{1} }^2+U_{\underline{x}_{1} }^2+ (\sum_{j=2}^m U_{\overline{x}_{j} })^2+ (\sum_{i=2}^m U_{\underline{x}_{i}})^2 +2U_{\overline{x}_{1} } U_{\underline{x}_{1} }+2\sum_{j=2}^m\sum_{i=2}^m U_{\overline{x}_{j} }U_{\underline{x}_{i}}\Big) dx\nonumber
   \\[0.02mm]
& \quad+ \frac{m}{2}\int_{\Omega_1^+}\Psi_{ W_{r,t}}\Big( 2U_{\overline{x}_{1} }\sum_{j=2}^m U_{\overline{x}_{j} }+2U_{\overline{x}_{1} }\sum_{i=2}^m U_{\underline{x}_{i}}+2U_{\underline{x}_{1}}\sum_{j=2}^m U_{\overline{x}_{j} }+2U_{\underline{x}_{1}}\sum_{i=2}^m U_{\underline{x}_{i}}\Big) dx
\nonumber
   \\[0.02mm]
& = \frac{m}{2}\int_{\Omega_{1}^+}\Psi_{ W_{r,t}}U_{\overline{x}_{1}}^2+m O_{m}\Big(  e^{ -   \tau  t r} + e^{- \tau \sqrt{1-t^2} r\frac{\pi}{m}}\Big).
 \end{align*}
\par
Moreover, we have
 \begin{align*}
 \frac{m}{2}\int_{\Omega_{1}^+}\Psi_{ W_{r,t}}U_{\overline{x}_{1}}^2
 &= \frac{m}{2}\int_{\R^3}\Psi_{ W_{r,t}}U_{\overline{x}_{1}}^2-\frac{m}{2}\int_{  \mathbb{R}^3 \setminus  \Omega_{1}^+     } \Psi_{ W_{r,t}}U_{\overline{x}_{1}}^2
 \nonumber
   \\[1mm]
&\leq\frac{m}{2}\int_{\R^3}\Psi_{ W_{r,t}}U_{\overline{x}_{1}}^2-\frac{m}{2}\int_{  \mathbb{R}^3 \setminus  B_\frac{|\overline{x}_{2}-\overline{x}_{1}|}{2} (\overline{x}_{1})    } \Psi_{ W_{r,t}}U_{\overline{x}_{1}}^2\nonumber
   \\[1mm]
&\leq\frac{m}{2}\int_{\R^3}\Psi_{ W_{r,t}}U_{\overline{x}_{1}}^2+O_{m}\Big(   e^{- \tau \sqrt{1-t^2} r\frac{\pi}{m}}\Big).
 \end{align*}
We calculate the first term as follows:
\begin{align*}
\frac{m}{2}\int_{\R^3}\Psi_{ W_{r,t}}U_{\overline{x}_{1}}^2
&=\frac{m}{2}\int_{\R^3}\Psi_{ U_{\overline{x}_{1}}}W_{r,t}^2
\nonumber
 \\[1mm]
&=\frac{m}{2}\int_{\R^3}\Psi_{ U^2}U^2+m\int_{\R^3}\Psi_{ U_{\overline{x}_{1}}}U_{\overline{x}_{1}}\sum_{i=2}^m U_{\overline{x}_{i}}+m\int_{\R^3}\Psi_{ U_{\overline{x}_{1}}}U_{\overline{x}_{1}}\sum_{i=1}^m U_{\underline{x}_{i}}
\nonumber
\\[1mm]
&\quad+\frac{m}{2}\int_{\R^3}\Psi_{ U_{\overline{x}_1}}\Big(\sum_{i=2}^m U_{\overline{x}_{i}} \Big)^2+\frac{m}{2}\int_{\R^3}\Psi_{ U_{\overline{x}_{1}}}\Big(\sum_{i=1}^m U_{\underline{x}_{i}} \Big)^2\\
&\quad+m \sum_{j=2}^m \sum_{i=1}^m \int_{\R^3}\Psi_{ U_{\overline{x}_{1}}}U_{\overline{x}_{j}}U_{\underline{x}_{i}}.
\end{align*}
From Lemma \ref{lemma1}, Lemma \ref{lemma2} and \eqref{sum6}, we have
 \begin{align*}
\frac{m}{2}\int_{\R^3}\Psi_{ U_{\overline{x}_{1}}}\Big(\sum_{i=2}^m U_{\overline{x}_{i}} \Big)^2 &=\frac{m}{2}\sum_{i=2}^m\int_{\R^3}\Psi_{ U_{\overline{x}_{1}}}\Big( U_{\overline{x}_{i}} \Big)^2+\frac{m}{2}\sum_{i,j=2 \atop i\neq j }^m\int_{\R^3}\Psi_{ U_{\overline{x}_{1}}}U_{\overline{x}_{i}} U_{\overline{x}_{j}}\nonumber
   \\[0.02mm]
&=\frac{m A_{1}^2}{16\pi^2}\frac{m\ln m}{r\sqrt{1-t^{2}}}+ m O_{m}\Big(\frac{m^{2}}{r^{2}(1-t^{2})} \Big)+m O_{m} \Big(  \sum_{i,j=2 \atop i\neq j }^m e^{-(1- \tau)|\overline{x}_{i}-\overline{x}_{j} | }\Big)\nonumber
   \\[0.02mm]
&=\frac{m A_{1}^2}{16\pi^2}\frac{m\ln m}{r\sqrt{1-t^{2}}}+ m O_{m}\Big(\frac{m^{2}}{r^{2}(1-t^{2})} \Big)+m O_{m} \Big(m  \sum_{i=2 }^{m}e^{-(1- \tau)|\overline{x}_{i}-\overline{x}_{1} | }\Big)
\nonumber
   \\[0.02mm]
&=\frac{m A_{1}^2}{16\pi^2}\frac{m\ln m}{r\sqrt{1-t^{2}}}+ m O_{m}\Big(\frac{m^{2}}{r^{2}(1-t^{2})} \Big)+m O_{m} \Big(   e^{- \tau \sqrt{1-t^2} r\frac{\pi}{m}}\Big).
\end{align*}
Also, from the results of ~Medina~and~Musso~in \cite{MaMo}, we know
\begin{align*}
\sum_{i=1}^m\frac{1}{|\overline{x}_{1}-\underline{x}_{i}|}
&=\frac{1}{r t}\Big(\sum_{i=1}^\frac{m-1}{2}\frac{1}{\big(\frac{(1-t^{2})}{t^{2}}\frac{(i-1)^{2}\pi^{2}}{m^{2}}+1 \big)^{\frac{1}{2}}} \Big)+O\Big( |\ln t|^{-1} \Big)
\nonumber
   \\[0.02mm]
&=\frac{m }{r\pi\sqrt{1-t^{2}}}\ln\frac{\pi}{t}\Big(1+O\big(t^{2} |\ln t|^{-1}\big) \Big),
\end{align*}
so we have
\begin{align*}
\frac{m}{2}\int_{\R^3}\Psi_{ U_{\overline{x}_{1}}}\big(\sum_{i=1}^m U_{\underline{x}_{i}} \big)^2
&=\frac{m}{2}\sum_{i=1}^m\int_{\R^3}\Psi_{ U_{\overline{x}_{1}}}\big( U_{\underline{x}_{i}} \big)^2+\frac{m}{2}\sum_{i,j=1\atop i\neq j}^m\int_{\R^3}\Psi_{ U_{\overline{x}_1}}U_{\underline{x}_{i}}U_{\underline{x}_{j}}\nonumber
   \\[0.02mm]
&= \frac{m}{2}\sum_{i=1}^m\int_{\R^3}\Psi_{ U_{\overline{x}_{1}}}\big( U_{\underline{x}_{i}} \big)^2+ m O\Big( \sum_{i,j=1 \atop i\neq j }^m e^{-(1- \tau)|\underline{x}_{i}-\underline{x}_{j} | } \Big)\nonumber
   \\[0.02mm]
&= \frac{m}{2}\sum_{i=1}^m\int_{\R^3}\Psi_{ U_{\overline{x}_{1}}}\big( U_{\underline{x}_{i}} \big)^2+ m O\Big( m\sum_{i=1 }^m e^{-(1- \tau)|\underline{x}_{i}-\underline{x}_{1} | }  \Big) \nonumber
   \\[0.02mm]
&= \frac{m}{16\pi}\sum_{i=1}^m\frac{1}{|\overline{x}_{1}-\underline{x}_{i}|}\int_{\R^3}U^2(x)\int_{\R^3}U^2(y)
     \nonumber
   \\[0.02mm]
&\quad+ m O\left(\sum_{i=1}^m\frac{1}{|\overline{x}_{1}-\underline{x}_{i}|^2}\right)+ m O\big( m\sum_{i=1 }^m e^{-(1- \tau)|\underline{x}_{i}-\underline{x}_{1} | }  \big)\nonumber
   \\[0.02mm]
&= \frac{m}{16\pi}\Big[\frac{m }{r\pi\sqrt{1-t^{2}}}\ln\frac{\pi}{t}\Big(1+O_{m}\big(t^{2} |\ln t|^{-1}\big) \Big)\Big]      \int_{\R^3}U^2(x)\int_{\R^3}U^2(y)\nonumber
   \\[0.02mm]
&\quad+ m O\Big(\sum_{i=2}^m\frac{1}{|\overline{x}_{1}-\overline{x}_{i}|^2}\Big)+m O_{m} \Big(   e^{- \tau \sqrt{1-t^2} r\frac{\pi}{m}}\Big)
\nonumber
   \\[0.02mm]
&=\frac{m}{16\pi}\frac{m }{r\pi\sqrt{1-t^{2}}}\ln\frac{\pi}{t}A_{1}^{2} +\frac{m^{2} }{r\pi\sqrt{1-t^{2}}}\ln\frac{\pi}{t}O_{m} \big(t^{2} |\ln t|^{-1}   \big)\nonumber
   \\[0.02mm]
&\quad+ m O_{m}\Big(\frac{m^{2}}{r^{2}(1-t^{2})}\Big)+m O_{m} \Big(   e^{- \tau \sqrt{1-t^2} r\frac{\pi}{m}}\Big),
\end{align*}
and
 \begin{align*}
 m \sum_{j=2}^m \sum_{i=1}^m \int_{\R^3}\Psi_{ U_{\overline{x}_{1}}}U_{\overline{x}_{j}}U_{\underline{x}_{i}}=m \sum_{j=2}^m \sum_{i=1}^m O_{m}\left(e^{-\left(1-\delta\right) | \overline{x}_{i}  -  \underline{x}_{j}|      } \right)=m O_{m} \Big(   e^{- \tau \sqrt{1-t^2} r\frac{\pi}{m}}\Big).
 \end{align*}
Hence we have
 \begin{align}  \label{mathbbI3}
 \mathbb{I}_{3}&=\frac{m}{2}\int_{\R^3}\Psi_{ U^2}U^2+m\int_{\R^3}\Psi_{ U_{\overline{x}_{1}}}U_{\overline{x}_{1}}\sum_{i=2}^m U_{\overline{x}_{i}}+m\int_{\R^3}\Psi_{ U_{\overline{x}_{1}}}U_{\overline{x}_{1}}\sum_{i=1}^m U_{\underline{x}_{i}}
\nonumber
   \\[1mm]
&\quad+\frac{m A_{1}^2}{16\pi^2}\frac{m\ln m}{r\sqrt{1-t^{2}}}+ \frac{m}{16\pi}\frac{m }{r\pi\sqrt{1-t^{2}}}\ln\frac{\pi}{t}A_{1}^{2} +\frac{m^{2} }{r\pi\sqrt{1-t^{2}}}\ln\frac{\pi}{t}O_{m} \big(t^{2} |\ln t|^{-1}   \big)
\nonumber
   \\[1mm]
&\quad+m O_{m}\Big(\frac{m^{2}}{r^{2}(1-t^{2})}+ e^{- \tau \sqrt{1-t^2} r\frac{\pi}{m}}\Big).
\end{align}
Combining \eqref{energyexpan},  \eqref{mathbbI1}, \eqref{mathbbI2}, \eqref{mathbbI3},  we have
\begin{align*} 
 I(W_{r,t} )
 & = \mathbb{I}_{1} +  \mathbb{I}_{2} - \mathbb{I}_{3}  \nonumber
   \\[1mm]
   & =\frac{m A_{2}}{16\pi}+m \frac{A_{1}}{r^q}-\frac{m A_{1}^2}{16\pi^2}\frac{m\ln m}{r\sqrt{1-t^{2}}}
     -\frac{m^{2}}{16\pi^2}\frac{1 }{r\sqrt{1-t^{2}}}\ln\frac{\pi}{t}A_{1}^{2} \nonumber
   \\[1mm]
    &\quad+O_{m}(\frac{1}{r^{q+\tau}})+m O_{m}\Big(\frac{m^{2}}{ r^{2}(1-t^{2})}\Big)-\frac{m^{2} }{r\sqrt{1-t^{2}}}\ln\frac{\pi}{t}O_{m} \big(t^{2} |\ln t|^{-1}   \big),
\end{align*}
where    $A_{1}, A_{2}$  are  defined in \eqref{A1A2}.
We complete the proof of Proposition \ref{func}.
 \end{proof}
\def \newblockProc{}
%\bigskip
 \vspace{3mm}

Acknowledgment. The authors would like to thank Chunhua Wang from Central China Normal University for the
helpful discussion with her. This paper was supported by NSFC grants (No.12071169) and the Fundamental Research Funds for the Central Universities(No.KJ02072020-
0319).
%\bigskip
 \vspace{1mm}

\end{document}